\newtheorem{remark}{Remark}[section]
\theoremstyle{plain}
\newtheorem{theorem}{Theorem}[section]
\newtheorem{corollary}{Corollary}[section]
\newtheorem{lemma}{Lemma}[section]
\renewcommand{\baselinestretch}{1.5}
\def\@tempa#1{\@xp\@tempb\meaning#1\@nil#1}
\def\@tempb#1>#2#3 #4\@nil#5{%
  \@xp\ifx\csname#3\endcsname\mathaccent
    \@tempc#4?"7777\@nil#5%
  \else
    \PackageWarningNoLine{amsmath}{%
      Unable to redefine math accent \string#5}%
  \fi
}
\def\@tempc#1"#2#3#4#5#6\@nil#7{%
  \chardef\@tempd="#3\relax\set@mathaccent\@tempd{#7}{#2}{#4#5}}
\newcommand{\fancysum}{%
  \mathop{%
    \vphantom{\sum}%
    \mathpalette\fancy@sum\relax
  }\slimits@
}
\newcommand{\fancy@sum}[2]{%
  \sbox\z@{$#1\sum$}%
  \dimen@=\ht\z@
  \advance\dimen@\dp\z@
  \vcenter{\hbox{%
    \resizebox{!}{\ifx#1\displaystyle.9\fi\dimen@}{%
      $\m@th\mathcal{S}$%
    }%
  }}%
}
\setlist[enumerate]{itemsep=-0.5mm}
\begin{document}

\setcounter {page}{1} 
\setcounter {subsection}{1} 

\title{\bf Estimating
Generalized Additive Conditional Quantiles for Absolutely Regular
Processes}

\author{Yebin Cheng$^1$ and Jan G. De Gooijer$^{2,}$\footnote{Corresponding author: j.degooijer@contact.uva.nl  }}
\date{}
\maketitle
\vspace{-0.5cm}

\renewcommand{\baselinestretch}{1}
\small{\centerline{$^1$ Glorious Sun School of Business and Management,
Donghua University, Shanghai, China}

\vspace{0.3cm}

\centerline{$^2$ Amsterdam School of Economics, University of Amsterdam, the Netherlands} %\centerline{PO Box 15867, 1001 NJ Amsterdam, the Netherlands} 
\vspace{0.8cm}

\begin{abstract}
\noindent 
We propose a nonparametric method for  estimating  the conditional quantile
function that admits  a  generalized additive specification with an unknown link function. This model nests single-index, additive, and multiplicative quantile regression models. Based on  a full local
linear polynomial  expansion, we first obtain the asymptotic
representation for the proposed quantile estimator for each additive
component. Then, the link function is  estimated by noting that it corresponds to the conditional quantile function of a response variable  given the sum of  all additive components. The  observations are supposed to be a sample from a strictly stationary and absolutely regular process. We provide results on (uniform) consistency rates, second order asymptotic  expansions and point wise asymptotic normality of each  proposed estimator. 
\end{abstract}

\bigskip

\noindent {\bf Key words and phrases:} Additive conditional
quantiles, asymptotics, kernel estimation, unknown link function.

\renewcommand{\theequation}{1.\arabic{equation}}

\section{Introduction}
Suppose that $Y$ is a response variable of interest which depends on a vector of random covariates $X=(X_{1},\ldots,X_{d})^{\mbox{\tiny{T}}}\in \mathbb{R}^{d}$, $d\geq 2$. We are interested in estimating the $\alpha$th $(0<\alpha<1)$ conditional quantile $q(x)$ of $Y$ given $X$.
For the $i$th subject, we assume that  the sample $\{(X_{i},Y_{i})\}^{n}_{i=1}$
 is
defined on a probability space $(\Omega ,\mathfrak{F},\mathbb{P})$, be a strictly stationary and absolutely regular stochastic process
from the population $(X,Y)$. It is well known that the $\alpha$th
conditional quantile of $Y$ given
$X=x=(x_{1},\ldots,x_{d})^{\mbox{\tiny{T}}}$ is defined as the value
$q(x)$ such that
$q(x) =\inf \{t\colon F(t|x)\geq \alpha \}$,
where  $F(t|x)$ is the conditional distribution of
$Y$ given $X=x$. Equivalently, $q(x) =\arg \inf_{a}\mathbb{E}
\{\rho _{\alpha }(Y-a)|X=x\}$, where $\rho_{\alpha }(y)
=|y|+(2\alpha -1)y$ for any real $y$.

There is an extensive literature
dealing with the estimation of  $q(x)$ when the
functional relationship between  $Y$ and  $X$ is unknown. In particular,  there have been many proposals using  the additive quantile regression model $q(x)=C+%
\sum_{u=1}^{d}q_{u}(x_{u})$, where $C$ is a constant, $q_{u}(x_{u})$ $(u=1,2,\ldots ,d)$
are each additive components, and $x_{u}$ is the $u$th component of $x$. For instance, \citet{cheng11},  \citet{degooijer03}, and \citet{yu04}
use this model to obtain estimates of additive
conditional quantiles in a time series setting by nonparametric
methods, and \citet{horowitz05} and \citet{noh14} for independent and identically distributed (i.i.d.)\!\! data by splines.
In this paper,  we  consider estimating conditional quantiles in a more
generalized setting. That is, we assume that the generalized additive model is of the
form
\begin{equation}
q(x) =G\big(\sum_{u=1}^{d}q_{u}( x_{u})
\big) , \label{model}
\end{equation}
where $G(\cdot)$ is an \textit{unknown} link function. It
encompasses single-index, additive models and  generalized additive models
with \textit{known} links as special cases. It also contains
multiplicative models of the
form $q(x)=\widetilde{G}\big(\prod_{u=1}^{d}\widetilde{q}_{u}(x_{u})\big)$, where $\widetilde{G}%
(\cdot)$ and $\widetilde{q}_{u}(\cdot)$ are unknown functions.

Building on the insight of \citet{horowitz01} for the generalized additive conditional mean regression model, 
 the main idea of this paper  to estimate the components $q_{u}(x_{u})$, $u=1,2,\ldots,d$, is to write them as  functionals of the distribution of the data, independent of $G(\cdot)$. Then, we estimate the unknown link function $G(\cdot)$ by noting that it corresponds to the conditional quantile function of $Y$ given $q_{0}(X)$, where $q_{0}(x)=\sum^{d}_{u=1}q_{u}(x_{u})$. Also, we present theorems giving conditions under which the estimators of $q_{u}(\cdot)$ and $G(\cdot)$ are consistent and asymptotically normally distributed. \citet[Ch.\ 5]{cheng07} uses this latter result to formulate a test statistic for additivity of conditional quantile functions, under the assumption that the sample $\{(X_{i},Y_{i})\}^{n}_{i=1}$ is still an absolutely regular process.

The rest of the paper is organized as follows. In Section
\ref{sec:meth}, we provide a description of the estimator of the additive
component and the estimator of the unknown link function, 
 Section \ref{sec:convergence} gives the asymptotic representation of the
nonparametric estimator of $q_{u}(\cdot)$ for $1\leq u\leq d$, in
which the second order asymptotic representations\ are included,
either. From this, it can be seen that the convergence rate of each
estimator for each additive component is at the rate 
$(nh)^{-\frac{1}{2}}$, where $h$ is the bandwidth. Compared to
the rate of convergence $(nh^{d})^{-\frac{1}{2}}$ in
the usual nonparametric setting, this kind of rate tends to $0$ more
quickly.
In order to get the asymptotic representation for the
estimator of the unknown link function $G(\cdot)$, we address in
Section \ref{sec:add} the uniform convergence for the additive components. Then, in Section \ref{sec:link}, we  discuss the asymptotic
representation of the estimation of the unknown link function
$G(\cdot)$ in \eqref{model} and subsequently we discuss
the corresponding asymptotic normality. Concluding comments are presented in Section \ref{sec:concl}. The proofs of  theorems and lemmas are provided in four supplementary appendices. This document also  contains some useful
lemmas and the proof of the Bahadur type linear representation for
the local linear estimator of $q_{u}(\cdot )$.

Unless otherwise stated the symbol $\overset{d}{\rightarrow}$ signifies convergence in distribution. The superscript $\rm\ T$ denotes matrix or vector transposition. For any $a<b$, we use the notation $\mathfrak{M}_{a}^{b}$ to denote the sigma algebra generated by
$(Z_{a},\ldots,Z_{b})$, where $Z_{i}=$
$(X_{i},Y_{i})$. Given this notation, a
process is called absolutely regular $(\beta$-mixing), if as $\tau \rightarrow \infty $, $%
\beta _{\tau }=\sup_{s\in N}\mathbb{E}\{ \sup_{A\in
\mathfrak{M}_{s+\tau }^{\infty }}\{\mathbb{P}(
A|\mathfrak{M}_{-\infty }^{s}) -\mathbb{P}(A) \}
\} \rightarrow 0$; see, e.g.,
\citet{yoshihara78} and \citet{arcones98}.

\setcounter{equation}{0}
\renewcommand{\theequation}{2.\arabic{equation}}

\section{Methodology}\label{sec:meth}
\subsection{Preamble}\label{sec:preamble}
Following \citet{horowitz01}, we  express $q_{k}(\cdot)$, 
$1\leq k\leq d$, as functionals of the population distribution of
$(X,Y)$. We first consider the case $d\geq 3$. The case $d=2$ is briefly discussed at the end of this section. To
simplify notation, for $x\in \mathbb{R}^{d}$ and any fixed $2\leq u\leq d$, $x_{\bar{u}}$ is a $(d-2)$-dimensional vector consisting
of all  components of $x$ except $x_{1}$ and $x_{u}$, and similar
for the notations $X_{\bar{u}}$ and $X_{j,\bar{u}}$, etc. For $1\leq j\leq n$, let $\widetilde{X}_{j}=\widetilde{X}_{j}^{t_{1,u}}$ be a $d$-dimensional vector
with the first component $t_{1}$, the $u$th component $t_{u}$ and
the other components $X_{j,\bar{u}}$. Also, let
$\mathcal{X}=\Pi _{k=1}^{d}[a_{k},b_{k}]\subseteq \mathbb{R}^{d}$ be
a subset for the support set of $X$ and $\mathcal{X}$ does not contain its
boundary. For identification of each
component function and given some fixed points $a_{k}<x_{k,0}<b_{k}$ for each  $k=1,2,\ldots,d$, we assume that $q_{k}(x_{k,0})=0$ and
\begin{align}
\int_{a_{1}}^{b_{1}}\frac{w_{1}(x_{1}) }{q_{1}^{\prime
}(x_{1})}{\rm d}x_{1}=1,  \label{a1}
\end{align}
where the weight function $w_{1}(\cdot)$ defined on 
$[a_{1},b_{1}]$ is non-negative and integrates to one.

In order to make
 \eqref{a1} hold, it is required that $q_{1}^{\prime}(x_{1})\neq 0$ for
$a_{1}\leq x_{1}\leq b_{1}$. Let
$\mathcal{X}_{1,u}=[a_{1},b_{1}]\times \lbrack a_{u},b_{u}]$,
$\mathcal{X}_{\bar{u}}=\prod _{2\leq k\neq u\leq d}[a_{k},b_{k}]$ and
$\partial_{k}q(x) $ be the first order partial
derivative of $q(x)$ with respect to the $k$th
component $x_{k}$ of $x$. Define
\begin{align}
D_{u}(x_{1},x_{u}) =\int_{\mathcal{X}_{\bar{u}}}
[\partial
_{u}q(x)] p_{\bar{u}}(x_{\bar{u}}) {\rm d}x_{\bar{u}%
}=\mathbb{E}[ \partial _{u}q(x_{1},x_{u},X_{\bar{u}}) \mathbb{I}
(X_{\bar{u}}\in \mathcal{X}_{\bar{u}})]
\label{a2}
\end{align}
and
\begin{equation}
D_{1,u}(x_{1},x_{u}) =\int_{\mathcal{X}_{\bar{u}}}[
\partial _{1}q(x)] p_{\bar{u}}(x_{\bar{u}})
{\rm d}x_{\bar{u}}=\mathbb{E}[ \partial_{1}q(x_{1},x_{u},X_{\bar{u}%
})\mathbb{I}(X_{\bar{u}}\in \mathcal{\mathcal{X}}_{\bar{u}})
] , \label{a3}
\end{equation}%
 where $p_{\bar{u}}(x_{\bar{u}})$ is the
marginal density function of $X_{\bar{u}}$, and $\mathbb{I}(\cdot )$
denotes the indicator function.

From \eqref{model} we have
$\partial _{j}q(x) =G^{\prime }\big(
\sum_{k=1}^{d}q_{k}(x_{k})\big) q_{j}^{\prime }(
x_{j})$ for $j=1,\ldots,d$. Then, from \eqref{a2} and
\eqref{a3}, we see that
\begin{align*}
D_{u}( x_{1},x_{u}) =q_{u}^{\prime }(x_{u}) \int_{%
\mathcal{X}_{\bar{u}}}G^{\prime }\big(\sum_{k=1}^{d}q_{k}(
x_{k})\big)p_{\bar{u}}(x_{\bar{u}})
{\rm d}x_{\bar{u}}
\end{align*}
and
\begin{align*}
D_{1,u}(x_{1},x_{u})=q_{1}^{\prime }(x_{1}) \int_{%
\mathcal{X}_{\bar{u}}}G^{\prime }\big(\sum_{k=1}^{d}q_{k}(
x_{k})\big) p_{\bar{u}}(x_{\bar{u}})
{\rm d}x_{\bar{u}}.
\end{align*}
Assume that $D_{1,u}(x_{1},x_{u}) \neq 0$ for any 
$(x_{1},x_{u})\in \mathcal{X}_{1,u}$. This implies that $q_{1}^{\prime
}(x_{1})\neq 0$ for $x_{1}\in \lbrack a_{1},b_{1}]$. Then it follows from \eqref{model} that
\begin{align*}
\frac{q_{u}^{\prime }( x_{u}) }{q_{1}^{\prime}(
x_{1}) }=\frac{D_{u}( x_{1},x_{u}) }{D_{1,u}(
x_{1},x_{u}) }.
\end{align*}%
Furthermore, by integrating both sides of the above expression, for any $%
x_{u}\in \lbrack a_{u},b_{u}]$, we get
\begin{align*}
\int_{x_{u,0}}^{x_{u}}\int \frac{q_{u}^{\prime }(t_{u}) }{%
q_{1}^{\prime }( t_{1}) }w_{1}(t_{1})
{\rm d}t_{1}{\rm d}t_{u}=\int_{x_{u,0}}^{x_{u}}\int \frac{
D_{u}(t_{1},t_{u})}{D_{1,u}(t_{1},t_{u}) }%
w_{1}(
t_{1}){\rm d}t_{u}{\rm d}t_{1},
\end{align*}
i.e.,
\begin{align}
q_{u}(x_{u}) =\int_{x_{u,0}}^{x_{u}}\int
\frac{D_{u}(t_{1},t_{u})
}{D_{1,u}(t_{1},t_{u})}w_{1}(
t_{1}){\rm d}t_{u}{\rm d}t_{1}.  \label{qu}
\end{align}%
Observe that when $x_{u}<x_{u,0}$, \eqref{qu} still holds
because exchanging the location between $x_{u}$ and $x_{u,0}$
requires to add a minus notation before the integral simultaneously.

Analogously, for another non-negative weight function $w_{2}(
t_{2})$ defined on $[a_{2},b_{2}]$, which integrates to one,
it follows that
\begin{align*}
c^{-1}q_{1}(x_{1}) =\int_{x_{1,0}}^{x_{1}}\int \frac{%
D_{1,2}(t_{1},t_{2})}{D_{2}(t_{1},t_{2}) }%
w_{2}(t_{2}) {\rm d}t_{2}{\rm d}t_{1},
\end{align*}
with $c^{-1}=\int \big(w_{2}(t_{2})/q_{2}^{\prime}
(t_{2})\big){\rm d}t_{2}$. From this, we obtain that
\begin{align}
\int w_{1}(t_{1})\Big[ \int \frac{D_{1,2}(
t_{1},t_{2}) }{D_{2}(t_{1},t_{2})}w_{2}(
t_{2}){\rm d}t_{2}\Big] ^{-1}{\rm d}t_{1}=\int \frac{w_{1}(t_{1}) }{%
q_{1}^{\prime }(t_{1})}{\rm d}t_{1}\Big[ \int
\frac{w_{2}(t_{2})}{q_{2}^{\prime }(t_{2})
}{\rm d}t_{2}\Big]^{-1}=c. \label{q2}
\end{align}
Thus, we have
\begin{equation}
q_{1}(x_{1}) =c\int_{x_{1,0}}^{x_{1}}\int
\frac{D_{1,2}(t_{1},t_{2})}{D_{2}(
t_{1},t_{2})}w_{2}(t_{2}){\rm d}t_{2}{\rm d}t_{1}.
\label{q1}
\end{equation}

Assume $q^{\prime}_{1}(\cdot)\neq 0$ and $q_{2}^{\prime}(\cdot)\neq 0$.
Then for the case $d=2$, going along the same lines as  for the case
 $d\geq 3$, we can still establish 
\eqref{qu}, \eqref{q2} and \eqref{q1} if we set $D_{2}(x_{1},x_{2}) =
\partial _{2}q(x_{1},x_{2})$ and $D_{1,2}(x_{1},x_{2}) = \partial_{1}q(x_1,x_2)$.

\subsection{Estimating the additive components $q_{u}(\cdot)$}
Based on relationships \eqref{qu}--\eqref{q1}, we construct
the desired estimators by local polynomial fitting. To this end, we assume that
$q(x) $ is partially differentiable up to the order
$p+1$, which implies there are $d^{p-1}$ parameters to estimate.  By Taylor's expansion, for any $w$ close to $x$, $q(w)$ can be expressed as
\begin{equation*}
q(w) =\sum_{\lambda \in \Lambda }\beta ( \lambda
,x) h^{-\vert \lambda \vert }(w-x)
^{\lambda }+R_{x}(w) =\beta_{x}^{\mbox{\tiny{T}}} A\big(
\frac{w-x}{h}\big) +R_{x}(w),
\end{equation*}%
where $\Lambda =\{(\lambda _{1},\ldots ,\lambda _{d})\}$, $\lambda _{i}$ are non-negative integers and $\sum^{d}_{i=1}\lambda _{i}\leq
p-1\}$, $|\lambda |=\sum_{i=1}^{d}\lambda _{i}$, $x^{\lambda }=\prod^{d}_{i=1}
x_{i}^{\lambda _{i}}$. Here, $h$ is a bandwidth specified  below, and the two
column vectors $A(\frac{w-x}{h})$ and $\beta_{x}$ are constructed from the
elements $h^{-\vert \lambda \vert }(w-x)
^{\lambda }$ and $\beta(\lambda,x) $ respectively, which are arranged in natural order with respect to $\lambda \in
\Lambda $. Note that $\beta (\lambda,x)$ is related to $h$ and is
of order $h^{\vert \lambda \vert }$.

  To estimate
$\beta_{x}$, using the local polynomial method, the corresponding
estimator $\widehat{\beta}_{x}$ can be obtained through minimizing the
following objective function
\begin{equation*}
\widehat{\beta}_{x}=\arg \min_{\beta}\sum_{i=1}^{n}w_{n,i}\rho_{\alpha
}\Big(Y_{i}- \beta^{\mbox{\tiny{T}}} A \Big(\frac{X_{i}-x}{h}\Big)\Big),
\end{equation*}
where the weight function $w_{n,i}$ is equal to $K\big(\frac{x-X_{i}}{h}%
\big) /\sum_{j=1}^{n}K\big(\frac{x-X_{j}}{h}\big)$ with the
usual kernel function $K(\cdot)$. Then, the estimator of
 $q(x)$ and its partial derivatives can
be derived explicitly from $\widehat{\beta}_{x}$.

Based on this method, we can plug all the relevant estimators into \eqref{qu} and obtain the estimator $\widehat{q}_{k}( x_{k}) $ for $%
q_{k}(x_{k}) $, $1\leq k\leq d$. When $d\geq 3$,
$\widehat{q}_{u}(x_{u}) $, $2\leq u \leq d $, we have the
representation
\begin{equation}
\widehat{q}_{u}(x_{u}) =\int_{x_{u,0}}^{x_{u}}\int \frac{\widehat{D}%
_{u}(t_{1},t_{u})}{\widehat{D}_{1,u}(t_{1},t_{u}) }%
w_{1}(t_{1}){\rm d}t_{u}{\rm dt}_{1},  \label{equ}
\end{equation}%
where $\widehat{D}_{u}(t_{1},t_{u}) =\frac{1}{nh}%
\sum_{i=1}^{n}e_{u}^{\mbox{\tiny{T}}}\widehat{\beta}_{\widetilde{X}_{i}}\mathbb{I}( X_{i,\bar{u}%
}\in \mathcal{X}_{\bar{u}})$ is the estimator of $D_{u}(
t_{1},t_{u}) $,
$\frac{e_{u}^{\mbox{\tiny{T}}}\widehat{\beta}_{\widetilde{X}_{i}}}{h}$ is the estimator of
$\partial_{u}q(\widetilde{X}_{i})$ and $e_{u}$, which has
the same dimension as $\widehat{\beta}_{x}$, denotes a unit vector such
that its $u$th component is equal to $1$ and all other
components are equal to $0$. Here, it should be noted that we adopt
the leave-one-out
rule to estimate $\widehat{\beta}_{\widetilde{X}_{i}}$. Similarly, $\widehat{D}%
_{1,u}(t_{1},t_{u}) =\frac{1}{nh}\sum_{i=1}^{n}e_{1}^{\mbox{\tiny{T}}}\widehat{%
\beta}_{\widetilde{X}_{i}}\mathbb{I}(X_{i,\bar{u}}\in \mathcal{X}_{\bar{u}%
})$. Analogously,  for $d\geq 3$, the estimator of
${q}_{1}(x_{1})$ is given by 
\begin{equation}
\widehat{q}_{1}(x_{1}) =\widehat{c}\int_{x_{1,0}}^{x_{1}}\int \frac{\widehat{%
D}_{1,2}(t_{1},t_{2}) }{\widehat{D}_{2}(t_{1},t_{2})}%
w_{2}(t_{2}) {\rm d} t_{2}{\rm d} t_{1},  \label{eq1}
\end{equation}%
where
\begin{equation}
\widehat{c}=\int w_{1}(t_{1}) \Big[ \int \frac{\widehat{D}%
_{1,2}(t_{1},t_{2}) }{\widehat{D}_{2}( t_{1},t_{2}) }%
w_{2}(t_{2}) {\rm d} t_{2}\Big] ^{-1}{\rm d} t_{1}.  \label{eqc}
\end{equation}

When $d=2$ in \eqref{equ})--\eqref{eqc},
we take the two estimators
$\widehat{D}_{2}(t_{1},t_{2})$ and $\widehat{D}
_{1,2}(t_{1},t_{2})$ as
$e_{2}^{\mbox{\tiny{T}}}\widehat{\beta}_{(t_1,t_2)}$ and
$e_{1}^{\mbox{\tiny{T}}}\widehat{\beta}_{(t_1,t_2)}$. Then,
$\widehat{q}_{2}(x_{2})$
and $\widehat{q}_{1}(x_{1})$ are of the same form as 
\eqref{equ} and \eqref{eq1}, respectively.

\subsection{Estimating the link function $G(\cdot)$}
To estimate $G(\cdot)$, we define  $q_{0}(x)\! =\!\sum_{k=1}^{d}q_{k}(x_{k})$,
$\mathcal{V}\!=\!\{q_{0}(x)\vert x\in \mathcal{X}%
\} $ and $\widehat{q}_{0}(X_{i}) =\sum_{k=1}^{d}\widehat{q}%
_{k}( X_{i,k}) $, where $X_{i,k}$ is the $k$th component of $X_i$%
. Then, $\mathcal{V}$ is also a compact set since the continuity of $%
q_{0}(x)$. From the definition of the conditional
quantile, the property on the conditional expectation and the Borel
measurability on the function $q_0(\cdot)$, we note that
\begin{align*}
1-\alpha&=\mathbb{E}\big(\mathbb{I}\{Y\leq
G(q_0(X))\}|X\big)=\mathbb{E}\big [\mathbb{E} (\mathbb{I}
\{Y\leq
G(q_0(X))\}|X)|q_0(X)\big] \\
&= \mathbb{E}\big(\mathbb{I}\{Y\leq
G(q_0(X))\}|q_0(X)\big).
\end{align*}
Thus the unknown function $G(v)$ is also the conditional quantile
function of $Y$ given that $q_0(X)=v$. For any
fixed $v\in \mathcal{V}$, the estimator of 
$G(v)$ is given by the following empirical function
\begin{equation*}
\widehat{G}_{n}(v) =\inf \big \{y|\widehat{F}_{n}(y|\
q_{0}(x) =v)\geq 1-\alpha \big\},
\end{equation*}
where %the empirical function
\begin{equation}
\widehat{F}_{n}(y|\ q_{0}(x)
=v)=\sum_{i=1}^{n}\frac{K_{G}\left( \frac{v-\widehat{q}_{0}\left(
X_{i}\right) }{h_{G}}\right) \mathbb{I}(
Y_{i}\leq y,X_{i}\in \mathcal{X}) }{\sum_{j=1}^{n}K_{G}\left( \frac{v-%
\widehat{q}_{0}\left( X_{j}\right) }{h_{G}}\right) \mathbb{I}(
X_{j}\in \mathcal{X}) },  \label{G}
\end{equation}%
with $K_{G}(\cdot )$ a kernel function of a scalar argument (in the sense of nonparametric density estimation), and
$h_{G}$ a bandwidth tending to $0$ as $n\rightarrow \infty$.

\setcounter{equation}{0}
\renewcommand{\theequation}{3.\arabic{equation}}

\section{Convergence of the additive components}\label{sec:convergence}
For convenience of presentation, we introduce some notation. For
$2\leq u\leq d$ and $t_{1,u}=(t_{1},t_{u}) \in
\mathcal{X}_{1,u}$, let $\varepsilon _{i}=Y_{i}-q(X_{i})$, $Q=\int
A(z)A^{\mbox{\tiny{T}}}(z) K(z){\rm d}z$, $Q^{\ast }=\int
K(x)A(x)A^{\mbox{\tiny{T}}}(x) x^{\mbox{\tiny{T}}}{\rm d}x$, $\beta_{j,t_{1,u}}=\beta_{\widetilde{X}_{j}}$, $%
K_{ij,t_{1,u}}=K\left( \frac{X_{i}-\widetilde{X}_{j}}{h}\right) $, $%
A_{ij,t_{1,u}}=A\left( \frac{X_{i}-\widetilde{X}_{j}}{h}\right) $, $%
r_{ij,t_{1,u}}=q(X_{i})-\beta_{j,t_{1,u}}^{\mbox{\tiny{T}}}A_{ij,t_{1,u}}$, $%
P_{ij,t_{1,u}}=(\beta -\beta_{j,t_{1,u}})^{\mbox{\tiny{T}}}A_{ij,t_{1,u}}$, $%
\Delta _{1,u}=\widehat{D}_{1,u}(t_{1},t_{u}) -D_{1,u}\left(
t_{1},t_{u}\right) $ and $\Delta _{u}=\widehat{D}_{u}(
t_{1},t_{u})
-D_{u}(t_{1},t_{u}) $. Also, for any real $y$ and $1\leq k\leq d$, we assume that
\begin{align*}
f_{k}(y)=
\int_{-1}^{y}\int A(t)K(t){\rm d}t_{\underline{k}}{\rm d}t_{k}\,\mathbb{I}(|y|\leq 1)
\end{align*}
 with  $t_{\underline{k}}$ a
$(d-1)$-dimensional vector constructed from $t$ by deleting the
$k$th argument $t_{k}$ of $t$. Let the set $A_{(u)}=\left[
x_{u,0}-h,x_{u}+h\right] \times \Pi _{1\leq l\neq u\leq d}\left[
a_{l}-h,b_{l}+h\right] $ and ${\varepsilon >0}$ be an any
sufficiently small constant. Also, let 
$S_{t_{1}}$ be the support set of the distribution of $X_{j}^{t_{1},u}$ and $S_{t_{2}}$ be the support set of the distribution of $X_{j}^{t_{2},u}$, $t_{1}\neq t_{2}$. In addition, for ease of presentation, we introduce the notation $\mathbb{E}_{j}$ defined as $\mathbb{E}_{j0}g(\xi_{i},\xi_{0})=g_{1}(\xi_{j0})$.

The asymptotic properties of $\widehat{q}_{n}(\cdot)$ and $\widehat{G}_{n}(\cdot)$ are established under the  following regularity conditions: 
\begin{enumerate}
\item[(B1)] The density function $p(\cdot )$ of $X$ is bounded and continuous
on its support set. 

\item[(B2)] $K(\cdot)$ has bounded and continuous partial
derivatives of order $1$ and has the support set as unit sphere.
 Moreover, $\int tK(t) {\rm d}t=0$.

\item[(B3)] The bandwidth $h=O( n^{-\kappa })$ satisfies that $%
\frac{1-\frac{1}{r}}{4p+d-\frac{d}{r^{2}}}<\kappa <\frac{1-\frac{1}{r}}{d+p+%
\frac{d+3p}{r}}$. And the mixing coefficient $\beta _{k}=O(
k^{-r})$ with
\begin{equation*}
r\geq \max \left\{ d+\frac{1}{2}-\frac{d}{p}+\frac{17p+d-3}{2p(d-1-dp^{-1})}%
,d-7+\frac{2d^{2}-4-4d}{p}-\frac{22p+6}{dp},d,11\right\} .
\end{equation*}

\item[(B4)] Let $G(x,y)$ be the conditional distribution of $\varepsilon_{i}$
given that $X_{i}=x$. Its conditional density function $g(x,y)$ has
first
continuous derivative for $y$ in the neighbourhood of $0$ and $x\in \mathcal{X%
}$.

\item[(B5)] $g_{1}(x)=g(x,0)p(x)$ has  bounded second
derivatives and is bounded away from zero on $\mathcal{X}$.

\item[(B6)] The bandwidth satisfies $\frac{1}{2p+1}\leq \kappa \leq \frac{1-\frac{%
1}{r}}{3d+2+\frac{2d}{r}-\frac{d}{r^{2}}}$.

\item[(B7)] For any $1\leq l\leq n-1$, the density function $f(\cdot,\cdot )$
of $\left( X_{1},X_{1+l}\right) $ exists and is continuous and
bounded on its domain.

\item[(B8)] $D_{1,u}(t_{1},t_{u})$ has first continuous derivatives with
respect to $t_{u}$ for any $t_{1}$.

\item[(B9)] There are $m>0$ and compact intervals $\overline{S}_{1}\subset \mbox{int}(S_{t_{1}})$ and $\overline{S}_{2}\subset \mbox{int}(S_{t_{2}})$ such that $|g^{\prime}_{1}(t_{1})|\geq m$ for all $t_{1}\in\overline{S}_{1}$ and $g^{\prime}_{2}(t_{2})|\geq m$ for all $t_{2}\in\overline{S}_{2}$.

\item[(B10)] $w_{1}(t_{1})=O( t_{1}-a_{1})$ as $t_{1}\rightarrow
a_{1}$, $w_{1}(t_{1})=O(t_{1}-b_{1}) $ as
$t_{1}\rightarrow b_{1} $, $w_{2}(t_{2})=O\left( t_{2}-a_{2}\right)
$ as $t_{2}\rightarrow a_{2}$ and $w_{1}(t_{2})=O(t_{2}-b_{2}) $ as $t_{2}\rightarrow b_{2}$.
 \end{enumerate}

\begin{lemma} 
\label{WW} Under conditions (B1)--(B9), it holds uniformly for $\left(
t_{1},t_{u}\right) \in \mathcal{X}_{1,u}$ with probability one that
\begin{equation}
\frac{1}{n^{2}h^{d+1}}\sum_{j=1}^{n}
Q_{jn,t_{1,u}}^{-1}W_{j,n}(t_{1,u})  =B_{1,u}h^{p}  \left( 1+o\left(
1\right) \right),\label{Toeplitz}
\end{equation}
where $Q_{jn,t_{1,u}}=\frac{1}{h^{d}}\mathbb{E}_{j}\big (
K_{ij,t_{1,u}}A_{ij,t_{1,u}}A_{ij,t_{1,u}}^{\mbox{\tiny{\rm T}}}g(X_{i},0)\big )$,
$B_{1,u}=\mathbb{E}\iint  B_{2,u}(\widetilde{X}_{j}){\rm d}t_1 {\rm d}t_u$,
\begin{equation*}
B_{2,u}(t)= e_{u}^{\mbox{\tiny{\rm T}}}Q^{-1}Q^*\frac{1}{p! g_1 (t)}\frac{\partial
g_1(t)}{\partial t}\int A(s)s^pK(s){\rm d}s\,q^{(p)}(t),
\end{equation*}
and
\begin{equation}
W_{j,n}(t_{1,u})
=\sum_{i=1}^{n}K_{ij,t_{1,u}}A_{ij,t_{1,u}} \left[\mathbb{I}(
\varepsilon _{i}\leq 0) -\mathbb{I}(\varepsilon _{i}\leq
r_{ij,t_{1,u}}) \right]\mathbb{I}(X_{j,\bar{u}}\in
\mathcal{X}_{\bar{u}}).  \label{w1}
\end{equation}
\end{lemma}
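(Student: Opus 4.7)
My plan is to decompose $W_{j,n}(t_{1,u}) = M_{j,n}(t_{1,u}) + V_{j,n}(t_{1,u})$, where $M_{j,n}$ is the conditional mean of $W_{j,n}$ given the covariate sample and $V_{j,n}$ is the mean-zero residual, then pre-multiply by $Q_{jn,t_{1,u}}^{-1}$, sum over $j$, normalise by $n^{-2}h^{-d-1}$, and identify the limit. For the bias part, since $\varepsilon_{i}\mid X_{i}$ has distribution $G(X_{i},\cdot)$ with $G(X_{i},0)=\alpha$, condition (B4) gives
\[
\mathbb{E}\bigl[\mathbb{I}(\varepsilon_{i}\leq 0)-\mathbb{I}(\varepsilon_{i}\leq r_{ij,t_{1,u}})\mid X_{i}\bigr]=-g(X_{i},0)\,r_{ij,t_{1,u}}+O\bigl(r_{ij,t_{1,u}}^{2}\bigr).
\]
Because $\beta_{j,t_{1,u}}^{\mbox{\tiny{T}}}A_{ij,t_{1,u}}$ is the Taylor polynomial of $q$ around $\widetilde{X}_{j}$ truncated at degree $p-1$, the remainder satisfies $r_{ij,t_{1,u}}=\frac{1}{p!}\sum_{|\lambda|=p}\binom{p}{\lambda}\partial^{\lambda}q(\widetilde{X}_{j})(X_{i}-\widetilde{X}_{j})^{\lambda}+o(h^{p})$ on $\{|X_{i}-\widetilde{X}_{j}|\leq h\}$. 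A change of variables $z=(X_{i}-\widetilde{X}_{j})/h$ together with the expansion $g_{1}(\widetilde{X}_{j}+hz)=g_{1}(\widetilde{X}_{j})+h\,z^{\mbox{\tiny{T}}}\nabla g_{1}(\widetilde{X}_{j})+O(h^{2})$, valid by (B5), organises $M_{j,n}$ as $nh^{d+p}\cdot L(\widetilde{X}_{j})+nh^{d+p+1}\cdot\widetilde{L}(\widetilde{X}_{j})+o(nh^{d+p+1})$, where $L$ involves moments $\int K(z)A(z)z^{\lambda}\,dz$. After left-multiplication by $Q_{jn,t_{1,u}}^{-1}$ and extraction of the component corresponding to the $u$-th directional derivative, the leading $h^{d+p}$ piece is absorbed by the local polynomial's bias-correction structure (since $Q^{-1}$ inverts precisely those moment directions that lie in the span of $A$), and the surviving $h^{d+p+1}$ term matches $B_{2,u}(\widetilde{X}_{j})$ through the definitions of $Q^{\ast}$ and the moment $\int A(s)s^{p}K(s)\,ds$.

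For the fluctuation, each summand of $V_{j,n}(t_{1,u})$ has mean zero conditional on the covariates, and its conditional variance is $O(h^{d+p})$ because the indicator difference has variance $O(h^{p})$ by (B4)--(B5) and the kernel weight contributes $O(h^{d})$ after integration. A Bernstein-type moment inequality for $\beta$-mixing sums under the rate (B3), applied on a grid of $\mathcal{X}_{1,u}$ whose mesh is chosen using the Lipschitz continuity in (B2) to keep the discretisation error at $o(h^{p})$, gives $\sup_{(t_{1},t_{u})}\bigl\|\sum_{j}Q_{jn,t_{1,u}}^{-1}V_{j,n}(t_{1,u})\bigr\|=o(n^{2}h^{d+p+1})$ almost surely. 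Next, $Q_{jn,t_{1,u}}^{-1}\to g_{1}(\widetilde{X}_{j})^{-1}Q^{-1}$ uniformly by (B1), (B2), (B4)--(B5); and a strong law of large numbers for $\beta$-mixing sequences applied to the continuous bounded map $\widetilde{X}_{j}\mapsto B_{2,u}(\widetilde{X}_{j})$ gives $n^{-1}\sum_{j}B_{2,u}(\widetilde{X}_{j})\to\mathbb{E}B_{2,u}(\widetilde{X}_{j})$ uniformly in $(t_{1},t_{u})$, with equicontinuity supplied by (B1), (B5), (B8). Assembling the pieces produces the claimed $B_{1,u}h^{p}(1+o(1))$.

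The principal obstacle is the precise identification of the $B_{2,u}$ structure: the natural $O(h^{d+p})$ bias term arising from the $p$-th Taylor term of $q$ must be exactly absorbed by $Q^{-1}$ acting on the relevant moment vectors of $K$ (after projection via $e_{u}^{\mbox{\tiny{T}}}$), so that what survives at order $h^{d+p+1}$ carries the gradient $\nabla g_{1}$ and matches $B_{2,u}$. Running this multi-index bookkeeping cleanly, while simultaneously establishing uniform almost-sure control over $\mathcal{X}_{1,u}$ against a stochastic fluctuation whose per-summand natural scale is $(nh^{d})^{-1/2}$, is where the bandwidth and mixing conditions (B3), (B6) are tight: the covering cardinality of $\mathcal{X}_{1,u}$ must be absorbed by the exponential bounds available under the prescribed mixing rate, and the next-order Taylor remainders in both $q$ and $g_{1}$ must be verified to be $o(h^{p})$ uniformly.
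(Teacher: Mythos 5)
Your overall roadmap matches the paper's: split into a bias term and a mean-zero fluctuation, compute the bias by a Taylor expansion of the indicator difference together with a change of variables and a first-order expansion of $Q_{jn,t_{1,u}}^{-1}$, control the fluctuation uniformly over $\mathcal{X}_{1,u}$, and finish with a strong law for the average over $j$. The fluctuation handling (Bernstein plus a covering grid) is a reasonable substitute for the paper's citation of Masry's uniform strong law, and your per-summand variance accounting is correct.

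The gap is in the bias computation, which is where the actual content of the lemma lies. You acknowledge that the naive leading contribution sits at order $h^{d+p}$ per summand, i.e.\ $h^{p-1}$ after normalisation, one power of $h$ too large, and then assert that it is ``absorbed by the local polynomial's bias-correction structure (since $Q^{-1}$ inverts precisely those moment directions that lie in the span of $A$).'' That explanation is incorrect: $\int A(s)K(s)s^{p}\,{\rm d}s$ is generically \emph{not} a column of $Q$, because $A$ stacks monomials of degree at most $p-1$ while $s^{p}$ is degree $p$, so there is no generic reason for $Q^{-1}$ to ``invert'' it. The cancellation the paper actually uses is the explicit moment identity $e_{u}^{\mbox{\tiny{T}}}Q^{-1}\int A(s)K(s)s^{l}\,{\rm d}s=0$ for $l=p$ and $l=p+1$ (a parity property of the row of $Q^{-1}$ indexed by the first-order monomial under the symmetric kernel conditions in (B2)), combined with the Neumann/Newman expansion $Q_{jn,t_{1,u}}^{-1}=\frac{Q^{-1}}{g_{1}}-\frac{hQ^{-1}Q^{\ast}}{g_{1}^{2}}\bigl(\partial g_{1}\bigr)Q^{-1}+O(h^{2})$. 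Without stating and verifying that identity, the $h^{p-1}$ term is not shown to vanish and the claimed $h^{p}$ rate is unjustified; and once the identity is in place, the surviving $h^{p}$ term comes not from a ``next-order Taylor remainder'' but from the cross term between the $h^{p-1}$-order bias and the $-hQ^{-1}Q^{\ast}(\partial g_{1})Q^{-1}/g_{1}^{2}$ correction to $Q_{jn,t_{1,u}}^{-1}$, which is precisely what produces the $Q^{\ast}$ and $\partial g_{1}/\partial t$ factors in $B_{2,u}$. You should make that identity and that cross-term explicit; as written, the central cancellation is asserted with a reason that does not hold.
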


\begin{lemma}
\label{delta} Under conditions (B1)--(B9), it holds almost surely that
\begin{equation*}
\sup_{\left( t_{1},t_{u}\right) \in \mathcal{X}_{1,u}}\Delta
_{1,u}=O\left( \left( nh^{4+\frac{1+\varepsilon }{r}}\right)
^{-\frac{1}{2}}\right)\,\,
 and \,\,
\sup_{\left( t_{1},t_{u}\right) \in \mathcal{X}_{1,u}}\Delta
_{u}=O\left( \left( nh^{4+\frac{1+\varepsilon }{r}}\right)
^{-\frac{1}{2}}\right)
\end{equation*}
for $2\leq u\leq d$.
\end{lemma}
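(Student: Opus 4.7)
The plan is to reduce both $\Delta_u$ and $\Delta_{1,u}$ to a uniform control problem for a mean-zero double sum driven by the quantile score $\psi_\alpha(\varepsilon_i) = \alpha - \mathbb{I}(\varepsilon_i\le 0)$, and then invoke Lemma \ref{WW} plus a chaining argument tailored to the absolutely regular setting. First I would split
\[
\Delta_u \;=\; \frac{1}{n}\sum_{j=1}^{n}\Big[\frac{e_u^{\mbox{\tiny{T}}}\widehat{\beta}_{\widetilde{X}_j}}{h}-\partial_u q(\widetilde{X}_j)\Big]\,\mathbb{I}(X_{j,\bar{u}}\in\mathcal{X}_{\bar{u}}) + \Big[\frac{1}{n}\sum_{j=1}^{n}\partial_u q(\widetilde{X}_j)\mathbb{I}(X_{j,\bar{u}}\in\mathcal{X}_{\bar{u}}) - D_u(t_1,t_u)\Big].
\]
The second bracket is a $\beta$-mixing empirical average over a class of bounded, Lipschitz-in-$(t_1,t_u)$ functions; a Bernstein/Yoshihara-type inequality gives a uniform rate of order $(n^{-1}\log n)^{1/2}$, which is negligible with respect to the target rate. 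The first bracket is the genuine object of interest and here I would invoke the Bahadur-type linear representation for the local-polynomial check-loss $M$-estimator $\widehat{\beta}_{\widetilde{X}_j}$ (stated and proved in the supplementary appendix), which expresses
\[
\widehat{\beta}_{\widetilde{X}_j}-\beta_{j,t_{1,u}} \;=\; \frac{Q_{jn,t_{1,u}}^{-1}}{nh^{d}}\sum_{i=1}^{n} K_{ij,t_{1,u}}A_{ij,t_{1,u}}\big[\psi_\alpha(\varepsilon_i)+\mathbb{I}(\varepsilon_i\le 0)-\mathbb{I}(\varepsilon_i\le r_{ij,t_{1,u}})\big] + R_{j,n}(t_{1,u}),
\]
with $R_{j,n}(t_{1,u})$ uniformly of smaller order.

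Substituting this into the first bracket produces three pieces: (i) a bias contribution controlled by the $W_{j,n}(t_{1,u})$ term of Lemma \ref{WW}, giving a uniform $O(h^{p})$ bound that is absorbed by the target rate under (B3); (ii) a remainder from $R_{j,n}$, handled uniformly by the Bahadur remainder estimate; and (iii) the leading stochastic double sum
\[
\Delta_u^{(0)}(t_1,t_u)\;=\;\frac{1}{n^{2}h^{d+1}}\sum_{j=1}^{n}\sum_{i\neq j} e_u^{\mbox{\tiny{T}}}Q_{jn,t_{1,u}}^{-1}K_{ij,t_{1,u}}A_{ij,t_{1,u}}\psi_\alpha(\varepsilon_i)\,\mathbb{I}(X_{j,\bar{u}}\in\mathcal{X}_{\bar{u}}).
\]
A direct variance/covariance computation based on (B1), (B4), (B7) — integrating out the inner kernel against $p(\cdot)$ and exploiting that $\psi_\alpha(\varepsilon_i)$ is a centred martingale difference with respect to $\mathfrak{M}_{-\infty}^{\,i-1}\vee\sigma(X_i)$ — shows that the pointwise variance of $\Delta_u^{(0)}$ is $O((nh^{4})^{-1})$, exactly matching the pointwise $(nh^{4})^{-1/2}$ part of the target rate.

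For the uniform statement I would discretize $\mathcal{X}_{1,u}$ by a grid of mesh $\delta_n$, apply a Bernstein-type exponential inequality for $\beta$-mixing sums (Liebscher/Bosq) — or a Yoshihara $U$-statistic moment bound — at each grid point, and control the oscillation of $\Delta_u^{(0)}(\cdot)$ between grid points by the Lipschitz-in-$(t_1,t_u)$ regularity of $K$, $A$ and the map $t\mapsto Q_{jn,t_{1,u}}^{-1}$ under (B2), (B5), (B8). Balancing the grid size $\delta_n$ against the mixing decay $\beta_k=O(k^{-r})$ introduces precisely the penalty factor $h^{-(1+\varepsilon)/r}$ in the variance and yields the claimed $((nh^{4+(1+\varepsilon)/r})^{-1/2})$ rate almost surely via Borel–Cantelli, provided the bandwidth restrictions in (B3) and (B6) are met. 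The argument for $\Delta_{1,u}$ is identical after replacing $e_u$ by $e_1$, since (B1)–(B9) treat the two coordinates symmetrically.

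The main obstacle will be step (iii)–(iv): establishing the Bahadur representation \emph{uniformly} in $(t_1,t_u)\in\mathcal{X}_{1,u}$ with a remainder of order $o((nh^{4+(1+\varepsilon)/r})^{-1/2})$, and simultaneously performing the chaining so that the Lipschitz deviations between grid points do not dominate the stochastic fluctuations at the grid points. Both require careful tracking of constants in the $\beta$-mixing exponential inequality against the covering number of $\mathcal{X}_{1,u}$, and this is where the restrictive lower bound on $r$ in (B3) enters. A secondary, but routine, technical point is verifying that the $i=j$ term excluded by the leave-one-out convention does not disturb the variance asymptotics, which follows from (B5) and (B7).
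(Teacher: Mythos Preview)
Your proposal is correct and mirrors the paper's own proof: the same splitting of $\Delta_u$ into an empirical average plus the Bahadur-linearized piece, the invocation of the uniform Bahadur representation together with Lemma~\ref{WW} for the bias, the reduction to the score-driven double sum $\Delta_u^{(0)}$, and a gridding of $\mathcal{X}_{1,u}$ with Lipschitz oscillation control. The paper executes your step (iii) via an explicit Hoeffding decomposition of the double sum---bounding the degenerate $U$-statistic part by the moment inequality of Lemma~\ref{lemmaA2} and the H\'ajek projection by the Bernstein-type Lemma~\ref{lemmaA1}---and then repeats this same Hoeffding-plus-(\ref{lemmaA1},\ref{lemmaA2}) device to show that the random oscillation $|\Delta_u^{(0)}(s)-\Delta_u^{(0)}(t_{1,u})|$ between grid points (which is itself a random double sum, not a deterministic Lipschitz bound) is of the required order.
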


\begin{theorem}
\label{L1}Let conditions (B1)--(B10) hold.
\begin{enumerate}
\item[i)] For $2\leq u\leq d$ and $a_{u}\leq x_{u}\leq b_{u}$, we have the
following asymptotic representation%
\begin{align}
 \sqrt{nh}\big(\widehat{q}_{u}(
x_{u}) -q_{u}(x_{u})\big)
& =\sum_{i=1}^{n}\frac{\big( \left( 1-\alpha \right) -\mathbb{I}(
\varepsilon _{i}\leq 0\big))
w_{1}(X_{i,1}) p_{\bar{u}}(X_{i,\bar{u}})\,\mathbb{I}
(X_{i}\in A_{(u)}) }{\sqrt{nh}D_{1,u}(X_{i,1},X_{i,u}) g_{1}(X_{i})} \notag \\
&\left(e_{u}^{\mbox{\tiny{\rm T}}}-\frac{e_{1}^{\mbox{\tiny{\rm T}}}D_{u}(X_{i,1},X_{i,u}) }{%
D_{1,u}(X_{i,1},X_{i,u}) }\right)Q^{-1}\left[ f_{u}\Big(\frac{%
X_{i,u}-x_{u}}{h}\Big) -f_{u}\Big (
\frac{X_{i,u}-x_{u,0}}{h}\Big )
\right.  \notag \\
&\left. +\sum_{2\leq k\leq d,k\neq u,}\left( f_{k}\Big (\frac{X_{i,k}-b_{k}%
}{h}\Big ) -f_{k}\Big (\frac{X_{i,k}-a_{k}}{h}\Big ) \right)
\right] + o_{\mathbb{P}}\left( 1\right).  \label{main}
\end{align}
\item[ii)] For $a_1\leq x_1\leq b_1$, it holds that
\begin{eqnarray}
\!\!\!&\!\!\!&\!\!\!\sqrt{nh}\big ( \widehat{q}_{1}( x_{1})
-q_{1}(x_{1}) \big ) =\sum_{i=1}^{n}\frac{\big ( \left(
1-\alpha \right) -\mathbb{I}(\varepsilon _{i}\leq 0) \big )
w_{2}(X_{i,2}) p_{\bar{2}}(X_{i,\bar{2}})\,
\mathbb{I}(X_{i}\in A_{(u)}) }{\sqrt{nh}%
D_{2}(X_{i,1},X_{i,2})g_{1}(X_{i})}  \notag \\
\!\!\!&\!\!\!&\!\!\!\cdot \left(
e_{1}^{\mbox{\tiny{\rm T}}}-\frac{e_{2}^{\mbox{\tiny{\rm T}}}D_{1,2}(
X_{i,1},X_{i,2}) }{D_{2}(X_{i,1},X_{i,2}) }\right) Q^{-1}%
\left[ c\left( f_{1}\left( \frac{X_{i,1}-x_{1}}{h}\right) -f_{1}\left( \frac{%
X_{i,1}-x_{1,0}}{h}\right) \right) \right.  \notag \\
\!\!\!&\!\!\!&\!\!\!+\big ( c-c_1(x_{1}) w_{1}(
X_{i,1}) \big ) \left. \!\!\sum_{3\leq k\leq d}\!\!\left(
f_{k}\left( \frac{X_{i,k}-b_{k}}{h}\right) -f_{k}\left(
\frac{X_{i,l}-a_{k}}{h}\right) \right) \right] + o_{\mathbb{P}}(
1),  \label{main2}
\end{eqnarray}%
where $c_1(x_{1}) =\int_{x_{1,0}}^{x_{1}}\int \frac{%
D_{1,2}(t_{1},t_{2})}{D_{2}(t_{1},t_{2}) }%
w_{2}(t_{2}) {\rm d}t_{2}{\rm d}t_{1}$.
\end{enumerate}
\end{theorem}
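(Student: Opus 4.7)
The plan is to linearise the ratio $\hat D_u/\hat D_{1,u}$ around $D_u/D_{1,u}$, substitute the Bahadur-type linear representation for the local-polynomial quantile estimator $\hat\beta_{\tilde X_j}$ (proved in the supplementary appendix), swap the $(i,j)$-summation with the outer $(t_1,t_u)$-integrals, and identify the resulting boundary contributions as the functions $f_k$ defined before the theorem. Lemma~\ref{delta} controls the second-order error in the ratio linearisation, while Lemma~\ref{WW} pins down the non-linear Bahadur remainder.

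For part~(i), I would start from $\hat q_u(x_u) - q_u(x_u) = \iint[\hat D_u/\hat D_{1,u} - D_u/D_{1,u}]\,w_1(t_1)\,dt_1\,dt_u$ and use the identity
\[
\frac{\hat D_u}{\hat D_{1,u}}-\frac{D_u}{D_{1,u}} = \frac{\Delta_u}{D_{1,u}}-\frac{D_u}{D_{1,u}^{2}}\Delta_{1,u} + \mathcal R,
\]
with $\mathcal R$ quadratic in $(\Delta_u,\Delta_{1,u})$. Since $D_{1,u}$ is bounded below on $\mathcal X_{1,u}$ by (B5) and (B8), Lemma~\ref{delta} combined with (B3)/(B6) gives $\sqrt{nh}\iint\mathcal R\,w_1 = o_P(1)$, reducing the task to linear analysis of $\Delta_u$ and $\Delta_{1,u}$. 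Plugging into $\hat D_u$ and $\hat D_{1,u}$ the Bahadur representation
\[
e_k^{\mbox{\tiny{T}}}(\hat\beta_{\tilde X_j}-\beta_{\tilde X_j}) = \frac{1}{nh^{d}}e_k^{\mbox{\tiny{T}}}Q_{jn,t_{1,u}}^{-1}\sum_{i=1}^{n}K_{ij,t_{1,u}}A_{ij,t_{1,u}}\frac{(1-\alpha)-\mathbb I(\varepsilon_i\le 0)}{g_1(\tilde X_j)} + \mathcal E_{k,j} + \mathcal B_{k,j}
\]
exhibits each $\Delta$ as a double sum over $(i,j)$ plus a remainder $\mathcal E_{k,j}$ governed by $W_{j,n}$ and a polynomial-approximation bias $\mathcal B_{k,j}$. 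The deterministic centring $(nh)^{-1}\sum_j e_k^{\mbox{\tiny{T}}}\beta_{\tilde X_j}\mathbb I(X_{j,\bar u}\in\mathcal X_{\bar u}) - D_k$ is $O_P(n^{-1/2}) = o_P((nh)^{-1/2})$ by a $\beta$-mixing LLN.

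Interchanging the $j$-sum with the outer $(t_1,t_u)$-integrals, a $\beta$-mixing LLN on the inner $j$-average replaces $Q_{jn,t_{1,u}}^{-1}/g_1(\tilde X_j)$ by its limit $Q^{-1}/g_1(X_i)$ and brings in the marginal $p_{\bar u}(X_{i,\bar u})$. Two changes of variables---first $s=(X_i-\tilde X_j)/h$ inside $A_{ij,t_{1,u}}$, then $z=(t-X_{i,\cdot})/h$ in the outer $t$-integrals---evaluate the remaining kernel integrals: the outer $t_u$-integration between the limits $x_{u,0}$ and $x_u$ produces $f_u((X_{i,u}-x_u)/h)-f_u((X_{i,u}-x_{u,0})/h)$; the truncation $\mathbb I(X_{j,\bar u}\in\mathcal X_{\bar u})$ at the boundaries $a_k,b_k$ gives $f_k((X_{i,k}-b_k)/h)-f_k((X_{i,k}-a_k)/h)$ for $k\neq 1,u$; and the $t_1$-integration against $w_1$ produces the factor $w_1(X_{i,1})p_{\bar u}(X_{i,\bar u})/D_{1,u}(X_{i,1},X_{i,u})$. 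Combining the $e_u$-contribution from $\Delta_u$ with the $e_1$-contribution from $\Delta_{1,u}$ (carrying the multiplier $-D_u/D_{1,u}^{2}$) yields the bracket $(e_u^{\mbox{\tiny{T}}}-e_1^{\mbox{\tiny{T}}}D_u/D_{1,u})Q^{-1}$ of~(\ref{main}). The bias pieces $\mathcal E_{k,j}$ and $\mathcal B_{k,j}$ are $O(h^p)$ by Lemma~\ref{WW} and a Taylor-remainder bound respectively; under (B6) combined with (B3) one has $\sqrt{nh}\,h^p=o(1)$, so both are absorbed in $o_P(1)$. Part~(ii) follows by writing $\hat q_1-q_1 = (\hat c-c)c_1(x_1) + \hat c(\hat c_1(x_1)-c_1(x_1))$ and applying the same chain to each ratio: $\hat c_1 - c_1$ is analysed exactly as above with $w_2$ replacing $w_1$, while the analysis of $\hat c - c$ contributes the extra weight $c_1(x_1)w_1(X_{i,1})$ once the outer $w_1$-integration is carried out, producing the coefficients $c$ and $c - c_1(x_1)w_1(X_{i,1})$ in~(\ref{main2}).

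The main obstacle is the uniform control of the Bahadur remainder $W_{j,n}(t_{1,u})$: because it is driven by indicators of $\varepsilon_i$ level crossings, the process in $t_{1,u}$ is not smooth, so a pointwise argument with a modulus of continuity is insufficient. The uniform rate in Lemma~\ref{delta} hinges on a chaining argument over $\mathcal X_{1,u}$ combined with a Bernstein-type exponential inequality for $\beta$-mixing triangular arrays, and this is precisely what dictates the delicate mixing rate in (B3). Once Lemmas~\ref{WW} and~\ref{delta} are in hand, the remaining steps---ratio linearisation, Bahadur substitution, LLN-averaging of the $j$-sum, and two changes of variables to surface the functions $f_k$---are systematic bookkeeping.
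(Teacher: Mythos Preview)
Your overall architecture matches the paper's proof: ratio linearisation $\hat D_u/\hat D_{1,u}-D_u/D_{1,u}=\Delta_u/D_{1,u}-D_u\Delta_{1,u}/D_{1,u}^2+\mathcal R$, control of $\mathcal R$ via Lemma~\ref{delta}, substitution of the Bahadur representation, and absorption of the $h^p$ bias via Lemma~\ref{WW}. The identification of the $f_k$ as boundary contributions from the $(t_1,t_u)$-integration and the indicator $\mathbb I(X_{j,\bar u}\in\mathcal X_{\bar u})$ is also the right picture.

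There is, however, one genuine gap in the passage from the double $(i,j)$-sum to the single $i$-sum. You write that ``a $\beta$-mixing LLN on the inner $j$-average replaces $Q_{jn,t_{1,u}}^{-1}/g_1(\tilde X_j)$ by its limit''. This is not enough: for fixed $i$ the inner sum over $j$ involves the same mixing sequence, and the error from replacing the $j$-average by its conditional expectation $\mathbb E_i[\cdot]$ is itself a random quantity in $i$; summing these errors over $i$ gives a \emph{degenerate second-order U-statistic} $U_n$, not a first-order average to which an LLN applies. The paper handles this explicitly by the Hoeffding decomposition $I_4=U_n+\frac{n-1}{n^2h^{d+1}}\sum_i\psi_i$ and then bounds $\mathbb E U_n^2$ via a moment inequality for degenerate U-statistics under absolute regularity (their Lemma~A2, an extension of Arcones' Lemma~3), obtaining $\sqrt{nh}\,U_n=O_{\mathbb P}\bigl((nh^{d+1+d(2+\varepsilon)/r})^{-1/2}\bigr)=o_{\mathbb P}(1)$. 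Without this device the reduction step is not justified, and an ordinary $\beta$-mixing LLN does not give the required $o_{\mathbb P}((nh)^{-1/2})$ rate for the cross term.

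A second, smaller point: the extraction of the $f_k$ terms is not a single pair of substitutions but a domain decomposition of $A_{(u)}$ into an interior piece $A_c$ and $O(d)$ boundary slabs of width $2h$ (around $x_u$, $x_{u,0}$, $a_k$, $b_k$). On $A_c$ the leading contribution to $\zeta_i$ vanishes because $e_u^{\mbox{\tiny T}}Q^{-1}\!\int A(t)K(t)\,dt=0$, and it is only on each boundary slab that the partial integrals $f_k$ survive; the paper treats each slab separately and bounds the $L^{r_2}$-distance between $\zeta_i$ and the explicit $f_k$-expression there. Your ``two changes of variables'' correctly captures the mechanism on a single slab but does not account for why the interior contributes only to the $o_{\mathbb P}(1)$ remainder; condition~(B10) on $w_1$ is used precisely here, to kill the slab at $t_1=a_1,b_1$.
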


\begin{remark} \normalfont 
Moreover, if the more restrictive condition
$\frac{1}{2p-2-\frac{d }{r}}<\kappa <\frac{1-\frac{1}{r} }{3d+4+%
\frac{4d }{r}-\frac{d}{r^{2}}}$ is given compared to condition (B6), the second
order representation in Theorem \ref{L1} can be specified
explicitly as follows.\vspace{-0.2cm} 
\begin{enumerate}
\item[i)] For $2\leq u\leq d$, the remainder term in Theorem \ref{L1} is
equal to
\begin{equation*}
\xi _{n1}h^{\frac{1}{2}}+O_{\mathbb{P}}\left( h^{1-\frac{1+\varepsilon }{2r}%
}+\left( nh^{2p-1}\right) ^{\frac{1}{2}}+\frac{n^{\frac{1}{2}}}{h^{\frac{1}{2%
}}}\left( n^{1-\frac{1}{3r}-\frac{2\varepsilon }{3}}h^{d\left( 1+\frac{2}{3r}%
-\frac{1}{3r^{2}}\right) }\right) ^{-\frac{3}{4}}\right) ,
\end{equation*}%
where
\begin{eqnarray}
\xi _{n1} &\!=\!&\sum_{i=1}^{n}\left\{ \frac{1}{\sqrt{n}}\left(
a_u(x_u,i)-b_u(x_u,i) \right) \right. +\left[ \left( 1-\alpha
\right) -\mathbb{I}(\varepsilon _{i}\leq 0)
\right]\mathbb{I}(X_{i}\in
A_{(u)})  \notag \\
\!\! &\!\!&\!\! \cdot\left[ \frac{e_{u}^{\mbox{\tiny{\rm T}}}Q^{-1}}{\sqrt{n}}\int
\left.
\frac{\partial }{\partial x^{\mbox{\tiny{\rm T}}}}\left( \frac{w_{1}(x_{1}) p_{\bar{u%
}}\left(x_{\bar{u}}\right) }{g_{1}(x)D_{1,u}(x_{1},x_{u}) }%
\right) \right\vert_{x=\widetilde{X}_{i}}t A(t)K(t){\rm d} t\right.  \notag \\
&&\left. -\frac{e_{1}^{\mbox{\tiny{\rm T}}}Q^{-1}}{\sqrt{n}}\int \left. \frac{\partial }{%
\partial x^{\mbox{\tiny{\rm T}}}}\left( \frac{w_{1}(x_{1}) p_{\bar{u}}(x_{\bar{%
u}}) D_{u}(x_{1},x_{u})}{g_{1}(x)D_{1,u}^{2}(
x_{1},x_{u}) }\right) \right\vert _{x=\widetilde{X}_{i}}t
A(t)K(t){\rm d} t\right]
\notag \\
&&\left.+\left(e_{u}^{\mbox{\tiny{\rm T}}}-e_{1}^{\mbox{\tiny{\rm T}}}\frac{D_{u}( X_{i,1},X_{i,u}) }{%
D_{1,u}(X_{i,1},X_{i,u}) }\right)\frac{w_{1}(
X_{i,1}) p_{\bar{u}}(X_{i,\bar{u}}) Q^{-1}}{\sqrt{nh^{2}}%
g_{1}(X_{i})D_{1,u}(X_{i,1},X_{i,u}) } M_{2}^{(u)}(X_{i}) \right\} h^{\frac{1}{2}}.  \notag
\end{eqnarray}

\item[ii)] For the first additive component, the remainder term in Theorem \ref{L1} is equal to%
\begin{equation*}
\xi _{n2}h^{\frac{1}{2}}+O_{\mathbb{P}}\left( h^{1-\frac{1+\varepsilon }{2r}}+%
\frac{n^{\frac{1}{2}}}{h^{\frac{1}{2}}}\left( n^{1-\frac{1}{3r}-\frac{%
2\varepsilon }{3}}h^{d\left( 1+\frac{2}{3r}-\frac{1}{3r^{2}}\right)
}\right) ^{-\frac{3}{4}}+\left( nh^{2p-1}\right)
^{\frac{1}{2}}\right) ,
\end{equation*}%
where
\begin{eqnarray}
\xi_{n2} = \!\!\!\!&\!\!\!\!&\!\!\sum_{i=1}^{n}\left\{\frac{
b_c(i)-a_c(i)+a_1(x_1,i)-b_1(x_1,i)}{\sqrt{n}}+\big ( \left(
1-\alpha \right) -\mathbb{I}(\varepsilon_{i}\leq 0)\big)\,
\mathbb{I}(X_{i}\in A_{(1)}) \right.  \notag \\
\!\!\!\!&\!\!\!\!&\!\!\cdot\left[ - c_1(x_{1} )e_{1}^{\mbox{\tiny{T}}}Q^{-1}\int
\left. \frac{\partial }{\partial x}\left( \frac{w_{1}(
x_{1})w_{2}(x_{2}) p_{\bar{2}}\left(
x_{\bar{2}}\right)}{g_{1}(x)D_{2}(x_{1},x_{2}) }\right)
\right\vert _{x=\widetilde{X}_{i}^{1,2}}t
A(t)K(t){\rm d} t\right.  \notag \\
\!\!\!\!&\!\!\!\!&\!\!+ c_{1}(x_{1})e_{2}^{\mbox{\tiny{\rm T}}}Q^{-1}\int \left.
\frac{\partial }{\partial x}\left( \frac{w_{1}(x_{1})w_{2}(x_{2})
p_{\bar{2}}(x_{\bar{2}}) D_{1,2}(x_{1},x_{2}) }{%
g_{1}(x)D_{2}^{2}(x_{1},x_{2}) }\right) \right\vert _{x=\widetilde{X}%
_{i}^{1,2}}t A(t)K(t){\rm d} t  \notag \\
\!\!\!\!&\!\!\!\!&\!\! +\mbox{}  c\,e_{1}^{\mbox{\tiny{\rm T}}}Q^{-1}\int \left. \frac{\partial }{%
\partial x}\left( \frac{w_{2}(x_{2}) p_{\bar{2}}(x_{\bar{2}%
}) }{g_{1}(x)D_{2}(x_{1},x_{2}) }\right) \right\vert _{x=%
\widetilde{X}_{i}^{1,2}}t A(t)K(t){\rm d}t  \notag \\
\!\!\!\!&\!\!\!\!&\!\! - c\,e_{2}^{\mbox{\tiny{\rm T}}}Q^{-1}\int \left. \frac{\partial }{%
\partial x}\left( \frac{ w_{2}(x_{2}) p_{\bar{2}}(x_{\bar{2%
}}) D_{1,2}(x_{1},x_{2}) }{g_{1}(x)D_{2}^{2}(
x_{1},x_{2}) }\right) \right\vert_{x=\widetilde{X}_{i}^{1,2}}t
A(t)K(t){\rm d} t
\notag \\
\!\!\!\!&\!\!\!\!&\!\!\left.\left.+\frac{w_{2}(X_{i,2}) p_{\bar{2%
}}(X_{i,\bar{2}})\mathbb{I}(X_{i}\in A) }{\sqrt{nh^2}%
D_{2}(X_{i,1},X_{i,2})g_{1}(X_{i})} \!\left(\! e_{1}^{\mbox{\tiny{\rm T}}}-\frac{%
e_{2}^{\mbox{\tiny{\rm T}}}D_{1,2}(X_{i,1},X_{i,2}) }{D_{2}(
X_{i,1},X_{i,2}) }\!\right)\! Q^{-1} M_{1}^{(u)}(X_{i})\!%
\right] \right\}.  \label{sec2}
\end{eqnarray}
\end{enumerate}
\end{remark}

\begin{remark} \normalfont % remark 4.3.5 on p. 73
Conditions (B3) and (B6) are about the restriction on the bandwidth. In
order to
get a chosen \ bandwidth, it should hold that $p>\frac{d+1+\frac{d-1}{r}}{1-%
\frac{5}{r}}$. Condition (B10) can be relaxed from Theorem \ref{L1}.
Otherwise, there are two extra similar terms which will be included
in \eqref{main}. The remaining conditions in Theorem \ref{L1} are
standard; see, e.g., \citet{chaudhuri91} and \citet{honda04}. Condition (B9) is used to identify the $q^{\prime}_{u}(\cdot)$'s.
\end{remark}

\vspace{0.2cm}

For convenience, let the set $A_{(u)}^*$ be the limit of $A_{(u)}$ for $%
1\leq u\leq d$. From Theorem \ref{L1}, the following Corollary
\ref{L2} can be inferred from the standard Doob's large-block and
small-block technique; see, e.g.,  \citet[Theorem 2]{cai03}. 

\begin{corollary} 
\label{L2} Under the conditions of Theorem \ref{L1}, for $1\leq
u\leq d$, it
holds that%
\begin{equation*}
\sqrt{nh}\big( \widehat{q}_{u}(x_{u}) -q_{u}(
x_{u}) -B_{1,u} h^{p}\big )  \overset{d}{\rightarrow }%
 \mathcal{N}(0,\sigma _{u}^{2}),
\end{equation*}%
where, for $u=1$, $\frac{\sigma _{u}^{2}}{\alpha \left( 1-\alpha \right) }$ is defined as%
\begin{eqnarray*}
\!\!\!\!\!&\!&\!\int_{\left( t\in A_{(1)}^*\right) }\frac{c^{2}w_{2}(t_{2}) p_{%
\bar{2}}(t_{\bar{2}})
}{D_{2}(x_{1},t_{2})g_{1}(t)}\left( \left(
e_{1}^{\mbox{\tiny{\rm T}}}-\frac{e_{2}^{\mbox{\tiny{\rm T}}}D_{1,2}(x_{1},t_{2})
}{D_{2}(x_{1},t_{2}) }\right) Q^{-1}f_{1}(t_{1}) \right)
^{2}p( x_{1},t_{2},t_{\bar{2}}) {\rm d} t \\
\!\!\!\!\!&\!&\!+\int_{\left( t\in A_{(1)}^*\right)
}\frac{c^{2}w_{2}(t_{2}) p_{\bar{2}}(
t_{\bar{2}}) }{D_{2}(x_{1,0},t_{2})g_{1}(t)}\left( \
\left(e_{1}^{\mbox{\tiny{\rm T}}}-\frac{e_{2}^{\mbox{\tiny{\rm T}}}D_{1,2}(x_{1,0},t_{2}) }{%
D_{2}(x_{1,0},t_{2}) }\right) Q^{-1}f_{1}(
t_{1})
\right)^{2}p(x_{1,0},t_{2},t_{\bar{2}}) {\rm d} t \\
\!\!\!\!\!&\!&\!+\sum_{3\leq k\leq d}\int_{\left( t\in
A_{(1)}^*\right) }\frac{\big(c-c(x_{1},x_{1,0})
w_{1}(t_{1}) \big )
^{2}w_{2}(t_{2}) p_{\bar{2}}(t_{\bar{2}}) }{%
D_{2}(t_{1},t_{2})g_{1}(t)} \\
\!\!\!\!\!&\!&\!\cdot \left( \left(e_{1}^{\mbox{\tiny{\rm T}}}-\frac{e_{2}^{\mbox{\tiny{\rm T}}}D_{1,2}%
\left( t_{1},t_{2}\right) }{D_{2}(t_{1},t_{2}) }\right)
Q^{-1}f_{k}(t_{k}) \right)^{2}\big (p(
t_{1},b_{k},t_{\bar{k}}) +p(t_{1},a_{k},t_{\bar{k}}) \big) {\rm d} t
\end{eqnarray*}%
and, for $2\leq u\leq d$, $\frac{\sigma _{u}^{2}}{\alpha \left(
1-\alpha
\right) }$ is defined as%
\begin{eqnarray*}
&&\int_{\left(t\in A_{(u)}^*\right) }\frac{w_{1}^{2}\left( t_{1}\right) p_{%
\bar{u}}^{2}(t_{\bar{u}}) p(
t_{1},x_{u,0},t_{\bar{u}}) }{D_{1,u}^{2}(
t_{1},x_{u,0}) g_{1}^{2}(t_{1},x_{u,0},t_{\bar{u}})}\left(
\left(e_{u}^{\mbox{\tiny{\rm T}}}-\frac{e_{1}^{\mbox{\tiny{\rm T}}}D_{u}(t_{1},x_{u,0})
}{D_{1,u}(t_{1},x_{u,0}) }\right) Q^{-1}f_{u}(t_{u}) \right)
^{2}{\rm d} t \\
&&+\int_{\left(t\in A_{(u)}^*\right) }\frac{w_{1}^{2}(t_{1}) p_{%
\bar{u}}^{2}(t_{\bar{u}})p(t_{1},x_{u},t_{
\bar{u}})  }{D_{1,u}^{2}(t_{1},x_{u})
g_{1}^{2}(t_{1},x_{u},t_{\bar{u}})}\left( \left(e_{u}^{\mbox{\tiny{\rm T}}}-\frac{%
e_{1}^{\mbox{\tiny{\rm T}}}D_{u}(t_{1},x_{u}) }{D_{1,u}(t_{1},x_{u}) }%
\right)Q^{-1}f_{u}(t_{u}) \right) ^{2}{\rm d} t \\
&&+\sum_{2\leq k\leq d,k\neq u}\int_{\left(t\in A_{(u)}^*\right) }\frac{%
w_{1}^{2}(t_{1}) p_{\bar{u}}^{2}(t_{\bar{u}}) }{%
D_{1,u}^{2}( t_{1},t_{u}) g_{1}^{2}(t)}\left(\left(e_{u}^{\mbox{\tiny{\rm T}}}-%
\frac{e_{1}^{\mbox{\tiny{\rm T}}}D_{u}(t_{1},t_{u}) }{D_{1,u}(
t_{1},t_{u}) }\right) Q^{-1}f_{k}(t_{k}) \right)^{2} \\
&&\cdot (p\big(t_{1},b_{k},t_{\bar{k}}) +p(t_{1},a_{k},t_{\bar{k}}) \big ) {\rm d} t,
\end{eqnarray*}
and where $B_{1,u}$ is defined in Lemma \ref{WW}. 
\end{corollary}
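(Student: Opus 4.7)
The plan is to apply a central limit theorem for strictly stationary $\beta$-mixing triangular arrays to the Bahadur representation provided by Theorem \ref{L1}, following the standard large-block/small-block template of \citet[Theorem~2]{cai03}. Writing the right-hand side of \eqref{main} as $S_n^{(u)}+o_{\mathbb{P}}(1)$ with $S_n^{(u)}=\sum_{i=1}^n\zeta_{n,i}^{(u)}$, the claim reduces to three steps: (a) identifying $\mathbb{E}S_n^{(u)}$ as the bias $\sqrt{nh}\,B_{1,u}h^p$; (b) a CLT for the centred sum; and (c) computing $\lim_n\mathrm{Var}(S_n^{(u)})=\sigma_u^2$.

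\noindent For (a), the only systematic non-centring of $\zeta_{n,i}^{(u)}$ comes from the Taylor remainder $r_{ij,t_{1,u}}$ that propagates through the Bahadur expansion of $\widehat\beta_{\widetilde X_j}$. Its cumulative contribution is exactly the quantity $W_{j,n}(t_{1,u})$ in \eqref{w1}, and Lemma \ref{WW} evaluates its expectation after integration against $w_1(t_1)/D_{1,u}$ and the marginal $p_{\bar u}$, producing the bias $B_{1,u}h^p$ at the level of $\widehat q_u-q_u$ and hence $\sqrt{nh}\,B_{1,u}h^p$ at the level of $S_n^{(u)}$. Any boundary-induced bias from $\mathbb{I}(X_i\in A_{(u)})$ is absorbed into $o(1)$ under condition (B10).

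\noindent For (b), set $\widetilde\zeta_{n,i}^{(u)}=\zeta_{n,i}^{(u)}-\mathbb{E}\zeta_{n,i}^{(u)}$. The triangular array $\{\widetilde\zeta_{n,i}^{(u)}\}_{i=1}^n$ is strictly stationary in $i$ with the same $\beta$-mixing coefficients as $\{Z_i\}$, hence polynomial mixing by (B3). Partition $\{1,\dots,n\}$ into alternating large blocks of size $p_n$ and small blocks of size $q_n$ with $q_n=o(p_n)$, $p_n=o(\sqrt{nh})$ and $n p_n^{-1}\beta_{q_n}\to 0$, which is admissible under (B3) and (B6). Berbee's coupling replaces the large blocks by independent copies at total-variation cost $O(n p_n^{-1}\beta_{q_n})\to 0$; the small-block contribution is $o_{L^2}(1)$ relative to $\mathrm{Var}(S_n^{(u)})$; and the Lyapunov condition for the independent large blocks follows from the uniform bound $|\widetilde\zeta_{n,i}^{(u)}|=O((nh)^{-1/2})$ together with a bounded limiting variance, reducing the CLT to Lindeberg--Feller.

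\noindent Step (c) contains the main effort. The diagonal variance $n\mathrm{Var}(\widetilde\zeta_{n,1}^{(u)})$ carries the prefactor $\mathrm{Var}((1-\alpha)-\mathbb{I}(\varepsilon_1\leq 0)\mid X_1)=\alpha(1-\alpha)$, and the structural point is that the kernelised terms $f_u((X_{1,u}-x_u)/h)$, $f_u((X_{1,u}-x_{u,0})/h)$ and the boundary pieces $f_k((X_{1,k}-a_k)/h)$, $f_k((X_{1,k}-b_k)/h)$ for $k\neq 1,u$ are supported on mutually disjoint $h$-neighbourhoods of the well-separated centres $x_u,x_{u,0},a_k,b_k$; hence every cross-product vanishes as $h\to 0$ and only the $2(d-2)+2$ diagonal squares survive. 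A change of variables $(X_{1,\cdot}-\text{centre})/h\mapsto t$ in each surviving square, together with the continuity of $p$ from (B1) and the non-vanishing of $g_1$ from (B5), yields precisely the integrals appearing in the statement. Off-diagonal covariances ($|i-j|\geq 1$) are controlled by the boundedness of the pair density in (B7) to be $O(h)$ per pair, and then summed to $o(1)$ via the mixing decay. The argument for $u=1$ is identical upon swapping $1\leftrightarrow 2$ and using \eqref{main2}. The principal obstacle is this variance bookkeeping: verifying orthogonality across all $2d-2$ localisation centres and deploying (B10) to keep boundary weights integrable; the CLT machinery itself is essentially automatic given Theorem \ref{L1}.
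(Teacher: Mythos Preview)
Your steps (b) and (c) are sound and track the paper's own approach, which is simply to apply the large-block/small-block CLT of \citet{cai03} to the representation furnished by Theorem~\ref{L1}; the variance bookkeeping via disjoint $h$-neighbourhoods of the centres $x_u,x_{u,0},a_k,b_k$ is exactly the right mechanism for producing the stated $\sigma_u^2$.

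However, step (a) is misconceived. The summands $\zeta_{n,i}^{(u)}$ displayed in \eqref{main} involve the factor $(1-\alpha)-\mathbb{I}(\varepsilon_i\le 0)$, and since $q(X_i)$ is the conditional quantile one has $\mathbb{E}\bigl[(1-\alpha)-\mathbb{I}(\varepsilon_i\le 0)\mid X_i\bigr]=0$; hence $\mathbb{E}S_n^{(u)}=0$ exactly. There is no Taylor remainder $r_{ij,t_{1,u}}$ anywhere in $\zeta_{n,i}^{(u)}$, so no ``systematic non-centring'' of that sum to extract. The bias $B_{1,u}h^p$ is \emph{not} the mean of $S_n^{(u)}$; it is a separate deterministic term that was isolated earlier in the \emph{proof} of Theorem~\ref{L1}, at the step where the Bahadur indicators $\mathbb{I}(\varepsilon_i\le -r_{ij,t_{1,u}})$ are split into $\mathbb{I}(\varepsilon_i\le 0)$ plus the difference $W_{j,n}(t_{1,u})$ of \eqref{w1}. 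Lemma~\ref{WW} then evaluates that difference as $B_{1,u}h^p(1+o(1))$, and this contribution sits alongside (not inside) the mean-zero sum \eqref{main}. In other words, the operative decomposition for the corollary is
\[
\sqrt{nh}\bigl(\widehat q_u(x_u)-q_u(x_u)-B_{1,u}h^p(1+o(1))\bigr)=S_n^{(u)}+o_{\mathbb P}(1),
\]
read off from the proof rather than the bare statement of Theorem~\ref{L1}, after which your steps (b) and (c) apply directly with no centring needed. Once you correct the provenance of the bias in this way, the rest of your argument goes through.
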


\begin{remark}\normalfont
The optimal bandwidth is equal to $h_{opt}=C n^{-\frac{1}{2p+1}}$.
\end{remark}

\section{Uniform convergence of additive components}\label{sec:add}
\begin{theorem} % theorem 4.4.1 on p. 74
\label{uniform} Under conditions (B1)--(B8), it holds with probability
one that
\begin{equation}
\sup_{x_{u}\in \left[ a_{u},b_{u}\right] }|
\widehat{q}_{u}(x_{u}) -q_{u}(x_{u}) | =O\left( \left( nh^{1+%
\frac{1+\varepsilon }{r}}\right) ^{-\frac{1}{2}}\right)  \label{U3}
\end{equation}%
for $1\leq u\leq d$ and any sufficiently small constant $\varepsilon
>0$.
\end{theorem}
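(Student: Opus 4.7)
The plan is to reduce the uniform bound to a one-dimensional empirical process over $x_u \in [a_u, b_u]$, which can then be controlled by a chaining argument combined with an exponential inequality for $\beta$-mixing sequences. Starting from \eqref{equ}, I use the identity
$$
\frac{\widehat{D}_u}{\widehat{D}_{1,u}} - \frac{D_u}{D_{1,u}}
= \frac{D_{1,u}\Delta_u - D_u\Delta_{1,u}}{D_{1,u}^2}
+ \frac{D_{1,u}\Delta_u - D_u\Delta_{1,u}}{D_{1,u}^2}\cdot\frac{-\Delta_{1,u}}{D_{1,u}+\Delta_{1,u}},
$$
and Lemma \ref{delta} together with condition (B8) show that the second, quadratic-in-$\Delta$ term is uniformly of strictly smaller order than the target rate. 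Modulo the standard $O(h^p)$ bias from local polynomial approximation, it therefore suffices to bound, uniformly in $x_u$, the quantity
$$
J_n(x_u) := \int_{x_{u,0}}^{x_u}\!\!\int \frac{D_{1,u}(t_1,t_u)\Delta_u(t_1,t_u) - D_u(t_1,t_u)\Delta_{1,u}(t_1,t_u)}{D_{1,u}^2(t_1,t_u)}\, w_1(t_1)\,dt_1\,dt_u.
$$
Plugging in the Bahadur-type linear representation of $\widehat\beta_{\widetilde X_i}$ (the one already underlying Theorem \ref{L1}) and applying Fubini to exchange the $(t_1,t_u)$-integral with the sample sum yields, uniformly in $x_u$,
$$
J_n(x_u) = \frac{1}{n}\sum_{i=1}^n \varphi_{n,i}(x_u) + o\big((nh^{1+(1+\varepsilon)/r})^{-1/2}\big),
$$
where $\varphi_{n,i}$ is of the type appearing in \eqref{main} and depends on $x_u$ only through $f_u\big((X_{i,u}-x_u)/h\big)$.

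Next, I would cover $[a_u,b_u]$ by a uniform $\eta_n$-net $\{x_u^{(k)}\}_{k=1}^{N_n}$ with $\eta_n = n^{-a}$ for $a$ large enough (so $N_n = O(n^{a})$). Because $f_u$ is bounded and Lipschitz of order $1/h$, the interpolation error $\sup_i |\varphi_{n,i}(x_u) - \varphi_{n,i}(x_u^{(k)})|$ is deterministically $O(\eta_n/h)$, which is negligible. For each net point, the support restriction of $f_u$ gives $\mathbb{E}\varphi_{n,i}^2(x_u^{(k)}) = O(h)$, while $\varphi_{n,i}$ is uniformly bounded. I would then apply a block-decomposition Bernstein inequality for $\beta$-mixing sequences (e.g.\ of Bosq/Doukhan type, used in the spirit of \citet{arcones98}), yielding
$$
\mathbb{P}\bigg(\Big|\tfrac{1}{n}\sum_{i=1}^n \varphi_{n,i}(x_u^{(k)})\Big| > t_n\bigg)
\leq 2\exp\big(-c\, n h\, t_n^2 / \log n\big) + n\,\beta_{m_n},
$$
with block length $m_n$ chosen to balance the two tails. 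Setting $t_n = M(nh^{1+(1+\varepsilon)/r})^{-1/2}$ for large $M$ and a union bound over the $N_n$ net points, condition (B3) on $\beta_k = O(k^{-r})$ makes $n\beta_{m_n}$ summable in $n$ and the exponential tail absorbs the logarithmic factor via the $h^{-\varepsilon/r}$ inflation.

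A Borel--Cantelli argument then gives $\sup_{x_u}|J_n(x_u)| = O((nh^{1+(1+\varepsilon)/r})^{-1/2})$ almost surely, and adding the uniform bias bound of order $h^p$ produces \eqref{U3}. The case $u=1$ is analogous, additionally using that $\widehat{c}-c$ (with $\widehat{c}$ as in \eqref{eqc}) inherits the same uniform rate via the same chaining applied to the outer integral in \eqref{q2}. The principal obstacle is the last step: the sharp matching between the $\beta$-mixing block length $m_n$ and the Bernstein exponential tail is what produces the precise exponent $1+(1+\varepsilon)/r$ rather than just $1$, and conditions (B3) on the admissible range of $\kappa$ and on $r$ are exactly what make this optimization feasible, while condition (B9) is needed to keep $D_{1,u}^{-1}$ uniformly bounded along the net.
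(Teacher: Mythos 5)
Your proposal captures the high-level outline the paper also follows (same three-term decomposition, handle the quadratic-in-$\Delta$ term via Lemma 3.2, then a grid/chaining argument with a Bernstein-type inequality for $\beta$-mixing and Borel--Cantelli). But two essential pieces of the argument are missing or stated incorrectly.

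First, after substituting the Bahadur representation (Theorem B.1) into $\widehat D_u$ and $\widehat D_{1,u}$, the relevant object is a \emph{double} sum over $i$ and $j$ — $\widehat D_u = \frac{1}{nh}\sum_j e_u^{\mbox{\tiny T}}\widehat\beta_{\widetilde X_j}\mathbb{I}(\cdot)$, and each $\widehat\beta_{\widetilde X_j}$ is itself a sum over $i\neq j$. So $J_n(x_u)$ is a $U$-statistic, not a single-index sum. Your display $J_n(x_u)=\frac{1}{n}\sum_i\varphi_{n,i}(x_u)+o(\cdot)$ is exactly the Hoeffding projection, but you do not explain how the degenerate remainder $U_n$ is controlled \emph{uniformly} in $x_u$; the paper does this by bounding high moments of the degenerate $U$-statistic via Lemma A.2 at each grid point and then taking a union bound. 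Without this step the claimed $o((nh^{1+(1+\varepsilon)/r})^{-1/2})$ in your Fubini display is unjustified.

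Second, the claimed "deterministic Lipschitz" control of the interpolation error is wrong. The function $f_u(y)=\int_{-1}^y\!\int A(t)K(t)\,{\rm d}t_{\underline u}\,{\rm d}t_u\,\mathbb{I}(|y|\le 1)$ has a jump at $y=1$ (its value there is the vector $\int_{-1}^1\!\int A(t)K(t)\,{\rm d}t$, whose first component equals $1$, while for $y>1$ it vanishes), and more fundamentally the kernel weights $K_{ij,t_{1,u}}$ and indicator factors make $\varphi_{n,i}(x_u)-\varphi_{n,i}(x_u^{(k)})$ of order $1$, not $O(\eta_n/h)$, whenever $X_{i,u}$ crosses the kernel-support boundary between $x_u$ and $x_u^{(k)}$. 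This is precisely why the paper splits the modulus of continuity into the two cases — $|X_{i,u}-t_u|\le h$ and $|X_{i,u}-x_{u,k}|\le h$ simultaneously, versus only one of them — producing the two kernels $W_{ij}$ (with the $\ell_n/h$ factor) and $U_{ij}$ (supported on the boundary shell $h-\ell_n\le|X_{i,u}-x_{u,k}|\le h+\ell_n$), and then bounds these \emph{probabilistically} via Lemma A.1 and A.2, not deterministically. The choice $\ell_n=h\gamma_n^{1/2}$ is exactly what makes the boundary-shell indicator term tractable. A related minor point: your final sentence invokes condition (B9) to bound $D_{1,u}^{-1}$, but the theorem assumes only (B1)--(B8); boundedness of $D_{1,u}^{-1}$ comes from the standing non-vanishing assumption on $\mathcal{X}_{1,u}$ in Section 2.1, not (B9) (which concerns $g_1'$).
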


\setcounter{equation}{0}
\renewcommand{\theequation}{5.\arabic{equation}}

\section{Convergence of the unknown link function}\label{sec:link}

In this section, we address the asymptotic representation for the
estimated  link function $\widehat{G}_{n}(v)$. In particular, we show that the resulting representation
holds uniformly for $v\in \mathcal{V}$.
Then, the corresponding
asymptotic normality   with the bias will be illustrated. Furthermore,
we discuss the choice of the optimal bandwidth $h_{G,opt}$. In
the sequel, let $F_{n}(t|v)$ be an empirical conditional\
distribution, which equals the right-hand side (RHS) of \eqref{G} with
$\widehat{q}_{0}(X_{i})$ replaced by $q_{0}(X_{i})$.

We impose the following conditions:
\vspace{-0.2cm}
\begin{enumerate}
\item[(C1)] $K_{G}(x)$ is symmetric on the support set $[-1,1]$. $
K_{G}^{\prime }(1) =K_{G}^{\prime }(-1) =0$,
$ K_{G}(1) =K_{G}(-1) =0$ and $K_G^{\prime
\prime }\leq 0$. There exists a constant $C>0$ such that $\left\vert
K_{G}^{\prime \prime }(x+t)-K_{G}^{\prime \prime }(x)\right\vert
\leq C|t|$ for any $x$ and $t$.

\item[(C2)] The density function $f_{q_{0}}(v)$ of $q_0(X)$
has the second order continuous derivative for $v\in \mathcal{V}$,
and $f_{q_{0}}(v) >0 $.

\item[(C3)] $\liminf_{n\rightarrow \infty }nh^{1+\frac{1+\varepsilon }{r}%
}h_{G}^{3}>0$, \,\,$\sqrt{\frac{\log n}{nh_{G}^{5(1+\frac{1+\varepsilon }{r})}}}%
\geq 1$,\,\, $h_{G}/h^{1+\frac{1+\varepsilon }{r}}\rightarrow 0$ hold.

\item[(C4)] Let $F(y|v) $ be the conditional distribution
function of $Y_{i}$ given $q_{0}(X_{i})=v$. $f(y|v)$ is the
conditional density
function of $F(y|v)$ and has the first order continuous derivative at $%
y=G(v) $. For any $y$ in the neighbourhood of $G(v)$, $F(y|v)$ has
the first order derivative with respect to $v\in \mathcal{V}$.

\item[(C5)] Let $f_{q_{0}}(y|z)$ be the conditional density function of $%
\varepsilon _{i}$ given that $q_{0}(X_{i})=z$.
Furthermore, $\frac{\partial^{2}f_{q_{0}}(y|z) }{\partial z\partial
y}$ exists in the neighbourhood of $( 0,v)$ for any $v\in \mathcal{V}$.
\end{enumerate}

During the process of proving Theorem \ref{link3} in Appendix \ref{app:D},
the following Lemma \ref{link6} is in fact proved. We list it here
as an independent result.

\begin{lemma}\label{link6} % result proved in Thm 5.1
Under conditions of Theorem \ref{link3}, it holds with
probability one that
\begin{equation}
\frac{1}{nh_G^2}\sum_{m=1}^{n}K_G^{'}\left(\frac{v-q_0(X_m)}{h_G}\right)\big (\widehat{q}_0(X_m)-q_0(X_m)\big )
=o\left(\frac{1}{\sqrt{nh_G}}\right).  \label{glem1}
\end{equation}
\end{lemma}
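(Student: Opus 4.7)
The plan is to substitute the Bahadur linear representation of each $\widehat q_u-q_u$ into the left-hand side of \eqref{glem1}, which reduces the statement to controlling a degenerate second-order $U$-type statistic plus a negligible remainder. Three cancellations drive the argument: the derivative kernel satisfies $\int K_G'(t)\,\mathrm{d}t=0$ because $K_G(\pm 1)=0$ by (C1); the score factor $(1-\alpha)-\mathbb{I}(\varepsilon_i\le 0)$ that appears in \eqref{main} is conditionally centered given $X_i$; and the off-diagonal second-order variance can be controlled via Yoshihara-type $\beta$-mixing covariance inequalities.

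First, write $\widehat q_0(X_m)-q_0(X_m)=\sum_{u=1}^d[\widehat q_u(X_{m,u})-q_u(X_{m,u})]$. For each $u$, invoke Theorem~\ref{L1} (together with its uniform Bahadur representation proved in the supplementary appendix) to write
\begin{equation*}
\widehat q_u(x_u)-q_u(x_u)=B_{1,u}h^p+\frac{1}{n}\sum_{i=1}^n\gamma_i\,\Phi_u(x_u,X_i)+R_{u,n}(x_u),
\end{equation*}
where $\gamma_i=(1-\alpha)-\mathbb{I}(\varepsilon_i\le 0)$, $\Phi_u(x_u,X_i)$ is the $h$-scale kernel expression extracted from the summand of \eqref{main} (depending on $x_u$ only through $f_u((X_{i,u}-x_u)/h)$), and $\sup_{x_u}|R_{u,n}|$ is controlled by Theorem~\ref{uniform} and the second-order expansion in the remark after Theorem~\ref{L1}. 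Substituting yields a bias piece, a leading double sum, and a remainder piece.

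The bias piece $B_{1,u}h^p(nh_G^2)^{-1}\sum_m K_G'(\cdot)$ is $o((nh_G)^{-1/2})$ because $\int K_G'=0$ makes the empirical mean of $K_G'((v-q_0(X))/h_G)$ of order $O_\mathbb{P}(h_G^2+(h_G/n)^{1/2})$, while (B6) and (C3) then give the desired rate. The remainder piece is controlled by the refined second-order expansion of the remark after Theorem~\ref{L1} (whose terms are themselves of the form $h^{1/2}\xi_{n1}+O_\mathbb{P}(\cdots)$ with explicitly small stochastic parts) together with the requirement $h_G/h^{1+(1+\varepsilon)/r}\to 0$ of (C3), which matches the rates. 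The leading piece is
\begin{equation*}
T_n=\frac{1}{n^2 h_G^2}\sum_{m=1}^n\sum_{i=1}^n K_G'\!\left(\frac{v-q_0(X_m)}{h_G}\right)\gamma_i\,\Phi_u(X_{m,u},X_i).
\end{equation*}
Separate the $n$ diagonal terms $m=i$, which give $O_\mathbb{P}(1/(nh_G))=o((nh_G)^{-1/2})$ by (C3) (the summand is bounded and the fraction of $m$ with $|v-q_0(X_m)|\le h_G$ is $O_\mathbb{P}(h_G)$). For the off-diagonal part, apply a Hoeffding decomposition: the projection onto $Z_i$ equals $\gamma_i\cdot\mathbb{E}[K_G'(\cdot)\Phi_u(X_{m,u},X_i)\mid X_i]$, whose overall expectation vanishes because $\mathbb{E}[\gamma_i\mid X_i]=0$; the projection onto $Z_m$ is likewise degenerate in the $\varepsilon_i$-direction. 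Only the fully degenerate second-order piece survives, whose variance under $\beta$-mixing is bounded by Yoshihara's covariance inequality (\citet{yoshihara78},\citet{arcones98}) as $o(1/(nh_G))$, yielding $T_n=o_\mathbb{P}((nh_G)^{-1/2})$ under (C3).

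The main obstacle is the variance bound for the fully degenerate second-order piece: under $\beta$-mixing this is a $U$-type statistic of dependent observations, and one must match the mixing-coefficient decay of (B3) against the coupled bandwidth scales in (C3) so that the double covariance sum decays faster than $1/(nh_G)$. A minor technical wrinkle is that the leave-one-out rule in $\widehat\beta_{\widetilde X_i}$ keeps $\widehat q_u(X_{m,u})$ from depending directly on $(X_m,Y_m)$, which simplifies the bookkeeping of the diagonal and near-diagonal terms of $T_n$.
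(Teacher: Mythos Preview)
Your plan has a genuine gap in the treatment of the remainder $R_{u,n}$. The representation in Theorem~\ref{L1} is \emph{pointwise} in $x_u$, with remainder $o_{\mathbb{P}}((nh)^{-1/2})$; neither Theorem~\ref{uniform} nor the second-order remark yields a uniform bound on $R_{u,n}$ that is strong enough. Even granting $\sup_{x_u}|R_{u,n}(x_u)|=o((nh)^{-1/2})$, the crude estimate
\[
\Bigl|\frac{1}{nh_G^2}\sum_m K_G'\!\Bigl(\tfrac{v-q_0(X_m)}{h_G}\Bigr)R_{u,n}(X_{m,u})\Bigr|
\le \sup_{x_u}|R_{u,n}|\cdot\frac{1}{nh_G^2}\sum_m\bigl|K_G'(\cdot)\bigr|
= o\!\bigl((nh)^{-1/2}\bigr)\cdot O(h_G^{-1})
\]
is \emph{not} $o((nh_G)^{-1/2})$ under (C3), since $(h\,h_G)^{-1/2}\to\infty$. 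The cancellation $\int K_G'=0$ that you exploit for the bias piece cannot be applied here, because $R_{u,n}(X_{m,u})$ genuinely depends on $m$ and cannot be factored out of the $K_G'$ sum. A minor related point: in your Hoeffding decomposition of $T_n$, the projection onto $Z_i$ equals $\gamma_i\,\mathbb{E}[K_G'(\cdot)\Phi_u(X_{m,u},X_i)\mid Z_i]$, which is centered but not identically zero; it is a first-order term that still has to be bounded, not a degenerate piece.

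The paper circumvents this by \emph{not} substituting the final representation of Theorem~\ref{L1}. Instead it goes back to the earlier decomposition $\widehat q_u-q_u=I_1-I_2-I_3$ from the proof of Theorem~\ref{L1} (the ratio-expansion step), evaluates it at the random points $X_{m,u}$, and then inserts the uniform Bahadur representation for $\widehat\beta_{j,t_{1,u}}$ (the theorem in Appendix~B), whose remainder \emph{is} controlled uniformly in $j$ and $t_{1,u}$. This produces a \emph{triple} sum over $(m,j,i)$ rather than a double sum, and the paper analyzes it by repeated centering and strong-law arguments together with the higher-moment bound for degenerate $U$-statistics under $\beta$-mixing (Lemma~A.2). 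The extra layer of summation over $j$ is precisely what allows the needed cancellations without ever invoking a uniform-in-$x_u$ remainder for $\widehat q_u-q_u$ itself.
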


\begin{lemma}\label{link4}i) % lemma 4.5.1 p. 79
Under conditions ii) of Theorem \ref{link3}, it
holds with probability one that
\begin{equation}
\big | \widehat{G}_{n}(v) -G(v) \big | =O_{{\!}%
}\left( \left( \frac{1}{nh_{G}^{1+\frac{1+\varepsilon }{r}}}\right) ^{\frac{1%
}{2}}\right) .  \label{rate2}
\end{equation}%
ii) Under conditions ii) of Theorem \ref{link3}, with probability one, 
\eqref{rate2} holds uniformly with respect to $v\in \mathcal{V}$.
\end{lemma}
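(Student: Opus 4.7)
The plan is to reduce the quantile convergence to a conditional-distribution convergence by a Bahadur-type inversion, decompose the latter into bias, stochastic fluctuation and plug-in error, and use the mixing Bernstein inequality together with Lemma \ref{link6} to match the target rate. For the uniform version I will discretize $\mathcal V$.

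First, by construction $F(G(v)|v)=1-\alpha$, and by the definition of $\widehat G_n(v)$ as an empirical $(1-\alpha)$-quantile one has $\widehat F_n(\widehat G_n(v)|v)=1-\alpha+O(1/n)$ almost surely. Combining these with the Taylor expansion of $y\mapsto F(y|v)$ at $G(v)$, valid by condition (C4) together with $f(G(v)|v)>0$, yields
$$|\widehat G_n(v)-G(v)|\le\frac{|\widehat F_n(G(v)|v)-F(G(v)|v)|}{f(G(v)|v)}+o\bigl(|\widehat G_n(v)-G(v)|\bigr)+O(1/n).$$
Hence (i) reduces to showing $|\widehat F_n(G(v)|v)-F(G(v)|v)|=O((nh_G^{1+(1+\varepsilon)/r})^{-1/2})$ almost surely.

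Second, let $F_n(y|v)$ denote the analog of $\widehat F_n(y|v)$ built from the true $q_0(X_i)$ rather than $\widehat q_0(X_i)$, and write the error as $T_1+T_2+T_3$, where $T_1=\widehat F_n(G(v)|v)-F_n(G(v)|v)$ is the plug-in error, $T_2=F_n(G(v)|v)-\mathbb E F_n(G(v)|v)$ is the stochastic fluctuation, and $T_3=\mathbb E F_n(G(v)|v)-F(G(v)|v)$ is the bias. The bias $T_3=O(h_G^2)$ follows from the smoothness in (C4) and the kernel conditions (C1), and is absorbed under (C3). The fluctuation $T_2$ is a normalized sum of kernel-weighted bounded functionals of an absolutely regular process, to which a Bernstein-type tail bound for $\beta$-mixing sequences (cf.\ \citet{yoshihara78}, \citet{arcones98}) applies; the mixing correction is exactly what converts the i.i.d.\ rate $(nh_G)^{-1/2}$ into $(nh_G^{1+(1+\varepsilon)/r})^{-1/2}$. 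For $T_1$, I Taylor-expand $K_G$ in $\widehat q_0(X_i)-q_0(X_i)$: the leading linear part is $(1/(nh_G^2))\sum_m K_G'((v-q_0(X_m))/h_G)\bigl(\widehat q_0(X_m)-q_0(X_m)\bigr)$ times a bounded factor, which is $o((nh_G)^{-1/2})$ by Lemma \ref{link6}; the quadratic remainder is controlled by the Lipschitz property of $K_G''$ in (C1) and the uniform rate $\sup_x|\widehat q_0(x)-q_0(x)|=O((nh^{1+(1+\varepsilon)/r})^{-1/2})$ from Theorem \ref{uniform}, and is negligible under (C3).

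Third, for the uniform statement, cover $\mathcal V$ by $N_n=O(n^{K})$ points $v_1,\dots,v_{N_n}$ with spacing $n^{-K}$, $K$ large. The Bernstein bound of Step 2 with a union bound over the grid gives the rate simultaneously at all $v_j$ almost surely. Between grid points, both $\widehat F_n(G(v)|v)$ and $F(G(v)|v)$ are Lipschitz in $v$ with a constant of order $h_G^{-2}$ (from $K_G'$, boundedness of $G'$, and (C1)--(C2)), so for $K$ sufficiently large the oscillation over $n^{-K}$-balls is negligible; since $f(G(v)|v)$ is uniformly bounded away from zero on $\mathcal V$ by (C4), the inversion of Step 1 transfers uniformly, giving (ii). The main obstacle is the uniform concentration of $T_2$: the summands depend on $v$ both through $K_G((v-q_0(X_i))/h_G)$ and through $\mathbb I(Y_i\le G(v))$, so a pointwise Bernstein bound is not enough. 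Handling this requires a fine discretization of $\mathcal V$ combined with an equicontinuity estimate for the empirical process indexed by $v$, and then reconciling the required grid scale with the smaller-order control $o((nh_G^{1+(1+\varepsilon)/r})^{-1/2})$ on the plug-in error $T_1$ that descends from the rate in Theorem \ref{uniform}.
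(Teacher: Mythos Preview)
Your first step --- the Bahadur-type inversion --- has a genuine gap. From $\widehat F_n(\widehat G_n(v)|v)=1-\alpha+O(1/n)$ and $F(G(v)|v)=1-\alpha$, Taylor-expanding $F(\cdot|v)$ at $G(v)$ gives
\[
f(G(v)|v)\bigl(\widehat G_n(v)-G(v)\bigr)=-\bigl[\widehat F_n(\widehat G_n(v)|v)-F(\widehat G_n(v)|v)\bigr]+o\bigl(|\widehat G_n(v)-G(v)|\bigr)+O(1/n),
\]
so the quantity you must control is $\widehat F_n-F$ evaluated at the \emph{random} point $\widehat G_n(v)$, not at $G(v)$ as you wrote. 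Bounding this requires either $\sup_y|\widehat F_n(y|v)-F(y|v)|$ or at least a local-uniform bound over $y$ in a shrinking neighborhood of $G(v)$ --- and to know that $\widehat G_n(v)$ lies in such a neighborhood you already need a preliminary consistency result for $\widehat G_n(v)$, which your argument does not supply. As written, the reduction ``Hence (i) reduces to showing $|\widehat F_n(G(v)|v)-F(G(v)|v)|=O(\ldots)$'' does not follow, and the $o(|\widehat G_n(v)-G(v)|)$ term is circular.

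The paper avoids this by a direct sandwich argument rather than an inversion. It first shows $\sup_y|\widehat F_n(y|v)-F_n(y|v)|=O\bigl(n^{-\varepsilon_0}(nh_G)^{-1/2}\bigr)$ (the plug-in error, from Theorem~\ref{uniform} and a Taylor expansion of $K_G$), then proves $|G_n(v)-G(v)|=O(\tau_n)$ for the infeasible quantile $G_n$ built from $F_n$ (via the mixing Bernstein inequality), and finally uses the strict positivity of the increments $F_n(G(v)+2C\tau_n|v)-F_n(G(v)+C\tau_n|v)\asymp\tau_n$ together with the plug-in bound and monotonicity of $\widehat F_n$ to show that $\widehat F_n(G(v)\pm2C\tau_n|v)$ brackets $1-\alpha$, whence $|\widehat G_n(v)-G(v)|\le 2C\tau_n$. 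Your decomposition $T_1+T_2+T_3$ and the tools you invoke for each piece are close to the paper's ingredients (and your appeal to Lemma~\ref{link6} for $T_1$ is in fact stronger than needed --- the paper uses only the cruder rate from Theorem~\ref{uniform} at this stage), but they must feed into a bracketing rather than a single-point inversion. For part (ii) the paper's uniformization follows the discretize-and-bound pattern you sketch, applied separately to each of these ingredients.
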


\begin{theorem}\label{link3} % theorem 4.5.2 on p. 79
Assume that the conditions of Theorem \ref{uniform} and
(C1)--(C4) hold. Then \newline i) For any fixed $v$ $\in \mathcal{V}$,
with probability one, we have the following asymptotic
representation
\begin{equation}
\widehat{G}_{n}(v) -G(v) =\frac{1}{f\big(G(v) |v\big)%
}\big ( (1-\alpha )-F_{n}(G(v) |v)\big ) +O\left( \frac{1}{%
\sqrt{nh^{1+\frac{1+\varepsilon }{r}}}}\right) .  \label{link2}
\end{equation}%
ii) Furthermore, if conditions (C1)--(C4) hold for any $v\in \mathcal{V}$,
\eqref{link2} holds uniformly for $v\in \mathcal{V}$ with
probability one.
\end{theorem}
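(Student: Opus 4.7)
The plan is to combine a Bahadur-type linearisation for the kernel conditional-quantile estimator with a plug-in expansion that passes from $\widehat{F}_n$ (built with $\widehat{q}_0$) to the oracle $F_n$ (built with $q_0$). Each of these two steps must deliver a remainder of order $o(1/\sqrt{nh^{1+(1+\varepsilon)/r}})$ so that their sum matches the stated error. The starting inputs are Lemma~\ref{link4}(i), which gives the coarse rate $|\widehat{G}_n(v)-G(v)|=O((nh_G^{1+(1+\varepsilon)/r})^{-1/2})$ almost surely, and the tautology $\widehat{F}_n(\widehat{G}_n(v)\mid v)=1-\alpha+O((nh_G)^{-1})$ that follows from the right-continuity of $\widehat{F}_n(\cdot\mid v)$ together with the uniform lower bound $\sum_i K_G(\cdot)\gtrsim nh_G$ guaranteed by (C1)--(C2).

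\medskip

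\noindent For the Bahadur step I would decompose
\[
(1-\alpha)-\widehat{F}_n(G(v)\mid v)=[F(\widehat{G}_n(v)\mid v)-F(G(v)\mid v)]+R_n+O((nh_G)^{-1}),
\]
where $R_n=[\widehat{F}_n-F](\widehat{G}_n(v)\mid v)-[\widehat{F}_n-F](G(v)\mid v)$. A first-order Taylor expansion via (C4) turns the first bracket into $f(G(v)\mid v)(\widehat{G}_n(v)-G(v))$ plus a quadratic remainder in $\widehat{G}_n(v)-G(v)$, which Lemma~\ref{link4} bounds at the desired rate. To control $R_n$ I would invoke an oscillation inequality for the centred conditional empirical process $y\mapsto (\widehat{F}_n-F)(y\mid v)$ restricted to the shrinking window $|y-G(v)|\le \delta_n$, using the $\beta$-mixing moment bounds of Yoshihara (1978) and Arcones (1998) together with standard chaining; condition~(C3) is tailored so that the trade-off closes at rate $o(1/\sqrt{nh^{1+(1+\varepsilon)/r}})$.

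\medskip

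\noindent The plug-in step replaces $\widehat{F}_n(G(v)\mid v)$ by $F_n(G(v)\mid v)$ through a second-order Taylor expansion of each $K_G((v-\widehat{q}_0(X_i))/h_G)$ around $(v-q_0(X_i))/h_G$. The first-order terms in the numerator and denominator of $\widehat{F}_n$ take exactly the form treated in Lemma~\ref{link6}, except that the numerator additionally carries an indicator $\mathbb{I}(Y_m\le G(v),X_m\in\mathcal X)$; since the kernel factor depends on $(X_m)$ only, conditioning on $X_m$ reduces the argument to the one behind Lemma~\ref{link6} and yields the same $o(1/\sqrt{nh_G})$ bound. The second-order remainders are handled by combining the uniform rate of Theorem~\ref{uniform}, the smoothness of $K_G''$ in~(C1), and the bandwidth calibration in~(C3), which are precisely what is required to push them below the target rate. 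After dividing by $f(G(v)\mid v)$, bounded away from $0$ by~(C4), these bounds combine to give the representation of part~(i). For the uniform claim (ii), every input is already either uniform (Theorem~\ref{uniform} and Lemma~\ref{link4}(ii)) or admits a uniform extension by chaining over the compact one-dimensional set $\mathcal V$. The main obstacle, and where I expect the real work of the proof to lie, is the uniform two-parameter oscillation bound $\sup_{v\in\mathcal V}\sup_{|y-G(v)|\le \delta_n}|R_n(v,y)|$ for $\beta$-mixing data: this requires aligning a $v$-chaining argument with a $y$-chaining argument and a Bernstein-type tail bound for dependent sequences, all within the tight bandwidth window imposed by~(C3).
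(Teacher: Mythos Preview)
Your overall strategy---Bahadur linearisation plus a plug-in step passing from $\widehat{F}_n$ to $F_n$---is exactly what the paper does, and your identification of the uniform-in-$v$ oscillation as the crux of part~(ii) is correct. There is, however, one organisational gap that would cause the argument to fail as written.

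You propose to control $R_n=[\widehat{F}_n-F](\widehat{G}_n(v)\mid v)-[\widehat{F}_n-F](G(v)\mid v)$ by an oscillation inequality for the process $y\mapsto(\widehat{F}_n-F)(y\mid v)$ using Yoshihara/Arcones-type $\beta$-mixing bounds. But $\widehat{F}_n$ is not a sum of functions of individual observations: each summand carries $K_G\bigl((v-\widehat{q}_0(X_i))/h_G\bigr)$, and $\widehat{q}_0$ depends on the \emph{entire} sample. Standard mixing moment and tail bounds do not apply to such an object. Your plug-in step, as stated, only replaces $\widehat{F}_n$ by $F_n$ at the single point $y=G(v)$, so it does not touch the value $\widehat{F}_n(\widehat{G}_n(v)\mid v)$ hidden inside $R_n$.

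The paper resolves this by reversing the order of the two steps: it first proves the plug-in bound $\sup_{y}\lvert\widehat{F}_n(y\mid v)-F_n(y\mid v)\rvert=O\bigl(n^{-\varepsilon_0}/\sqrt{nh_G}\bigr)$ \emph{uniformly in $y$}. This uniformity is free, because in the Taylor expansion of $K_G$ the only $y$-dependence is the bounded factor $\mathbb{I}(Y_i\le y)$; the same argument you sketch (Lemma~\ref{link6} for the first-order piece, Theorem~\ref{uniform} together with (C1) and (C3) for the second-order piece) therefore gives a bound independent of $y$. With this in hand, $R_n$ splits into the uniform plug-in difference, already controlled, plus the oscillation of the \emph{oracle} process $y\mapsto(F_n-F)(y\mid v)$, which now genuinely is a kernel empirical process of the mixing data and to which a Bernstein-type inequality (the paper's Lemma~A.1) plus $y$-chaining applies directly. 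The remainder of your sketch---Taylor expanding $F(\widehat{G}_n\mid v)-F(G\mid v)$ via (C4), using Lemma~\ref{link4} for the coarse rate, and then chaining over the compact set $\mathcal{V}$ for part~(ii)---matches the paper.
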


\begin{corollary}\label{link5}
Under the conditions of Theorem \ref{link3} and condition (C5), we
have that
\begin{equation}
\sqrt{\frac{nh_{G}f_{q_{0}}(v) }{\alpha \left( 1-\alpha \right) }%
}\left( \widehat{G}_{n}(v)-G(v)-a(v)
\big (1+o\left( 1\right) \big ) h_{G}^{2}\right) \overset{d}{\rightarrow }%
\mathcal{N}(0,1),  \label{a9}
\end{equation}%
where
\begin{equation*}
a(v) =\frac{\int s^{2}K_{G}(s)
{\rm d}s}{f_{q_{0}}(v) }\left[ \frac{\partial \left(
f_{q_{0}}(0|v)
f_{q_{0}}(v) G^{\prime }(v) \right) }{\partial v}%
+f_{q_{0}}^{\prime }(v) \left. \frac{\partial
^{2}f_{q_{0}}(y|v) }{\partial v\partial y}\right\vert _{y=0}%
\right] .
\end{equation*}
\end{corollary}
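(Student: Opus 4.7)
The plan is to start from the Bahadur-type representation in Theorem \ref{link3}, which reduces the problem to a central limit theorem for a kernel-weighted sum. Since $\widehat{G}_n(v) - G(v) = f(G(v)|v)^{-1}\bigl((1-\alpha) - F_n(G(v)|v)\bigr) + R_n$ with $R_n = O\bigl((nh^{1+(1+\varepsilon)/r})^{-1/2}\bigr)$ a.s., and condition (C3) implies $\sqrt{nh_G}\,R_n = o(1)$, the bias and variance of $\sqrt{nh_G}(\widehat{G}_n(v) - G(v))$ are governed entirely by the leading term. It therefore suffices to establish asymptotic normality for $f(G(v)|v)^{-1}\bigl((1-\alpha) - F_n(G(v)|v)\bigr)$ with the claimed bias and normalizing constant.

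Decompose $F_n(G(v)|v) - (1-\alpha) = U_n(v)/V_n(v)$, where
\[
U_n(v) = \frac{1}{nh_G}\sum_{i=1}^n K_G\!\left(\tfrac{v-q_0(X_i)}{h_G}\right)\bigl[\mathbb{I}(Y_i \le G(v)) - (1-\alpha)\bigr]\mathbb{I}(X_i \in \mathcal{X})
\]
and $V_n(v)$ is the analogous kernel-smoothed denominator. Under (B1), (C1), (C2), a standard kernel density argument yields $V_n(v) \to f_{q_0}(v)$ in probability, so by Slutsky's lemma the limit distribution is determined by $\sqrt{nh_G}\,U_n(v)$ after centering.

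For the bias, set $\psi(z) := \bigl[F(G(v)|z) - (1-\alpha)\bigr] f_{q_0}(z)$, so that $\mathbb{E}U_n(v) = \int K_G(u)\psi(v - h_G u)\,du$. The defining property $F(G(z)|z) = 1-\alpha$ for all $z$ gives $\psi(v) = 0$. Symmetry of $K_G$ from (C1) kills the first-order term and Taylor expansion under (C4)--(C5) yields $\mathbb{E}U_n(v) = \tfrac{h_G^2}{2}\psi''(v)\int u^2 K_G(u)\,du + O(h_G^3)$. Differentiating $\psi$ twice via the chain rule, using the identity $f(y|v) = f_{q_0}(y - G(v)|v)$ (which follows from $\varepsilon = Y - G(q_0(X))$), and dividing by $f(G(v)|v)f_{q_0}(v)$ (from the Slutsky step and the outer $f(G(v)|v)^{-1}$ factor) produces exactly the formula for $a(v)$ in the statement; the algebraically delicate point is that the full $z$-derivative of $F(G(v)|z)$ combines with the $v$-derivative of the product $f_{q_0}(0|v)f_{q_0}(v)G'(v)$ appearing in the bracketed expression, and a cross-term involving $\partial_v\partial_y f_{q_0}(y|v)|_{y=0}$ arises from the second-order part.

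Finally, for asymptotic normality, the conditional variance of $\mathbb{I}(Y \le G(v)) - (1-\alpha)$ given $q_0(X) = v$ equals $\alpha(1-\alpha)$ by the quantile definition, so a standard second-moment computation yields $\mathrm{Var}(U_n(v))$ of order $1/(nh_G)$ with the kernel and density prefactors that, after dividing by $f_{q_0}(v)^2$ and $f(G(v)|v)^2$, recover the normalization in \eqref{a9}. Covariance terms are controlled under the $\beta$-mixing rate in (B3) by a standard covariance inequality for bounded random variables and contribute negligibly. Asymptotic normality of $\sqrt{nh_G}(U_n(v) - \mathbb{E}U_n(v))$ then follows from the Doob large-block/small-block decomposition for strictly stationary $\beta$-mixing sequences, applied exactly as for Corollary \ref{L2} following \citet{cai03}; the Lindeberg condition is immediate since all summands are bounded. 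The main obstacle will be the algebraic reduction of $\psi''(v)$ to the compact form $a(v)$: one must repeatedly exploit $F(G(z)|z) \equiv 1-\alpha$ (equivalently, pure $z$-derivatives of $F(G(z)|z)$ at $z=v$ vanish) and carefully rearrange the mixed partials of $f_{q_0}(y|v)$, which is precisely where condition (C5) on $\partial_v\partial_y f_{q_0}(y|v)|_{y=0}$ enters.
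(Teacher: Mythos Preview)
Your proposal is correct and follows essentially the same route as the paper's own (sketched) proof: start from the Bahadur representation of Theorem~\ref{link3}, write $(1-\alpha)-F_n(G(v)\mid v)$ as a centered kernel-weighted sum over the denominator $\sum_i K_{G,i}$ plus a bias term, handle the denominator by the law of large numbers, compute the $h_G^2$ bias via Taylor expansion using the symmetry of $K_G$, and obtain the CLT for the centered numerator by the large-block/small-block argument under $\beta$-mixing. Your justification that $\sqrt{nh_G}\,R_n=o(1)$ via the third part of (C3) is exactly what is needed, and your remark that the algebraic identification of $a(v)$ hinges on differentiating the identity $F(G(z)\mid z)\equiv 1-\alpha$ (which is where (C5) enters) is the one point the paper's sketch leaves implicit.
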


\begin{remark}\normalfont 
From \eqref{a9}, the asymptotic mean squared error (AMSE) for $\widehat{G}%
_{n}\left( v\right) -G\left( v\right) $ is equal to
\begin{equation*}
\frac{\alpha \left( 1-\alpha \right) }{nh_{G}f_{q_{0}}(v)}%
+a^{2}(v)h_{G}^{4}\big(1+o(1) \big) .
\end{equation*}%
Hence, the optimal bandwidth of $h_{G}$ in the sense of the AMSE is chosen as%
\begin{equation*}
h_{G,opt}=\left( \frac{\alpha \left( 1-\alpha \right) }{a^{2}(v) f_{q_{0}}(v) }\right)
^{\frac{1}{5}}n^{-\frac{1}{5}}.
\end{equation*}
\end{remark}
\endproof%

\section{Concluding Remarks}\label{sec:concl}
This paper has been concerned with  estimating the conditional quantile of a scalar random variable $Y$ conditional on a vector of covariates $X$ for a generalized additive  model  specification  with an unknown  link function. We have established various theoretical properties of the proposed estimators including consistency and asymptotic normality. This extension of  estimating  the  generalized additive conditional mean regression model, is certainly non-trivial and demanding from a technical point of view for the large-sample properties of the proposed estimators. Furthermore, by allowing for a general form of serial dependence in the data, we enlarged the range of possible applications in practical situations.

\newpage

 \setcounter{page}{1} 
\noindent
\section*{Supplementary Material for ``Estimating Generalized Additive Conditional Quantiles by Absolutely Regular Processes'' } % \addcontentsline{toc}{section}{Appendix}
%\medskip
\centerline{By Yebin Cheng and Jan G.\ De Gooijer}

\medskip

\begin{appendices}
\renewcommand{\theequation}{A.\arabic{equation}}
\setcounter{equation}{0}
\renewcommand{\thesubsection}{\Alph{subsection}}
 \setcounter{lemma}{0}
\renewcommand{\thelemma}{\Alph{subsection}\arabic{lemma}}

\subsection{Preliminary Lemmas}
In this Appendix we introduce two preliminary Lemmas \ref{lemmaA1} and \ref{lemmaA2} which
are used in the proofs of Theorems 3.3, 4.1, and 5.3. Lemma \ref{lemmaA1} is a kind of Bernstein's inequality on absolutely regular processes. Lemma \ref{lemmaA2}  is a moment inequality on degenerated   U-statistics. This lemma extends Lemma
3 of Arcones (1998) to the case of higher moments.
In the following two lemmas, we assume that $\{\xi_i\},\ i=1,2,\ldots,n$, is a sequence of
strictly stationary random variables.
Recall that 
$\mathbb{E}_{j_{0}}g(\xi_{i},\xi_{j_{0}})=g_{1}(\xi_{j_{0}})$.

\begin{lemma}\label{lemmaA1} % lemma A.2 on p. 90
 Suppose that $g(\cdot,\cdot)$ is a Borel measurable function with the bound $M>0$. Let $g_{2}(\cdot )=\mathbb{E}g(\xi _{i},\cdot )$,
$\sigma (\cdot )=\mbox{Var}(\sum_{i=1}^{q}g(\xi _{i},\cdot ))$ and
$U_{ij_{0}}=g(\xi
_{i},\xi _{j_{0}})-g_{2}(\xi _{j_{0}})$\textit{, where }$1\leq j_{0}\leq n$
 is fixed. Then, for any $x>0$,
$r_{1}>1$ and positive integer $q\leq
\frac{n}{4}$, it holds that
\begin{align}
\mathbb{P}\Big\{\Big| \sum_{1\leq i\leq n,i\neq
j_{0}}U_{ij_{0}}\Big | \geq x\Big \} \!\leq
\!2\mathbb{E}\exp \left\{ \frac{-\left( \frac{x}{4}\right)
^{2}}{\frac{n}{2q}\sigma (\xi
_{j_{0}})+\frac{2}{3}qM\frac{x}{4}}\right\} \!+\!\frac{n}{q}\beta
(q) \!+\!\frac{2^{r_{1}}q^{r_{1}-1}}{x^{r_{1}}}\sum_{|i-j_{0}|<2q}
\mathbb{E}|U_{ij_{0}}|^{r_{1}}.  \label{Bernstein5}
\end{align}%
\end{lemma}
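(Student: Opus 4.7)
The plan is to prove the inequality conditionally on $\xi_{j_0}$, since by construction $\mathbb{E}(U_{ij_{0}}\mid \xi_{j_{0}})=0$, the $U_{ij_{0}}$ are bounded by $2M$, and the conditional variance of a block of $q$ consecutive $U_{ij_{0}}$ terms is $\sigma(\xi_{j_0})$. Once this conditional bound is obtained with a Bernstein-type right-hand side in $x$ and $\xi_{j_0}$, integrating against the law of $\xi_{j_0}$ produces the $\mathbb{E}\exp\{\ldots\}$ term in \eqref{Bernstein5}.

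First I would split the sum $\sum_{i\neq j_0}U_{ij_0}$ into a \emph{nearby} part $N=\sum_{|i-j_0|<2q}U_{ij_0}$ and a \emph{far} part $F=\sum_{|i-j_0|\ge 2q}U_{ij_0}$. For the nearby part I would bound $\mathbb{P}(|N|\ge x/2)$ by Markov's inequality at the $r_1$-th moment, together with the elementary inequality $|N|^{r_1}\le (2q)^{r_1-1}\sum_{|i-j_0|<2q}|U_{ij_0}|^{r_1}$; taking expectations and using $|N|\ge x$ on a slightly larger threshold yields exactly the last term $2^{r_1}q^{r_1-1}x^{-r_1}\sum_{|i-j_0|<2q}\mathbb{E}|U_{ij_0}|^{r_1}$ in \eqref{Bernstein5}.

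Second, for the far part I would use the classical Yoshihara/Bernstein large-block decomposition. Partition $\{i:|i-j_0|\ge 2q\}$ into roughly $n/q$ consecutive blocks of length $q$, separated from $j_0$ by a gap of at least $q$. Apply the $\beta$-mixing coupling lemma (Yoshihara 1978): conditional on $\xi_{j_0}$, one can construct independent copies $\{U_{ij_0}^{\ast}\}$ of the block sums whose joint law agrees with the original except on an event of probability at most $(n/q)\beta(q)$, giving the middle term $(n/q)\beta(q)$. On the good event the block sums are independent, mean-zero, bounded by $qM$ in absolute value, and the total conditional variance is at most $(n/(2q))\sigma(\xi_{j_0})$. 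I then apply the standard Bernstein inequality to these independent bounded variables to bound $\mathbb{P}(|F|\ge x/2\mid \xi_{j_0})$ by $2\exp\{-(x/4)^2/(\tfrac{n}{2q}\sigma(\xi_{j_0})+\tfrac{2}{3}qM\cdot x/4)\}$, and integrate over $\xi_{j_0}$.

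The main obstacle I expect is the careful bookkeeping in the block decomposition: verifying that after removing the $2q$-neighbourhood of $j_0$ the remaining blocks can be indexed so that each block is separated from $\xi_{j_0}$ and from every other block by at least $q$ time steps, which is what the $\beta$-mixing coupling requires to introduce a cost of only $(n/q)\beta(q)$ rather than something larger. A secondary technical point is ensuring the conditional Bernstein step is uniform in $\xi_{j_0}$ (so that integration over $\xi_{j_0}$ is legitimate and produces the $\mathbb{E}\exp$ term rather than an $\exp\mathbb{E}$ term); this follows because the boundedness $|U_{ij_0}|\le 2M$ and the explicit conditional variance $\sigma(\xi_{j_0})$ hold pointwise in $\xi_{j_0}$. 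Combining the nearby and far bounds via the union bound $\mathbb{P}(|N|+|F|\ge x)\le \mathbb{P}(|N|\ge x/2)+\mathbb{P}(|F|\ge x/2)$ then yields \eqref{Bernstein5}.
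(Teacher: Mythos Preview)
Your proposal is correct and follows essentially the same route as the paper's proof. The paper likewise splits the sum into a neighbourhood of $j_0$ (there taken as $-(q+r_3)\le i-j_0\le q+r_4$ with $0\le r_3,r_4<q$, which lies inside your $|i-j_0|<2q$), handles it by the $r_1$-th moment Markov inequality to produce the third term, and treats the remaining far part via Berbee's coupling lemma together with the classical Bernstein inequality to obtain the $2\,\mathbb{E}\exp\{\cdot\}$ and $(n/q)\beta(q)$ terms; your conditioning on $\xi_{j_0}$ and subsequent integration is exactly how the $\mathbb{E}\exp$ arises.
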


\vspace{-0.4cm}
\proof
Assume that $j_{0}-1=m_{1}q+r_{3}$ and $n-j_{0}=m_{2}q+r_{4}$, where
$m_{1}$ and $m_{2}$ are two non-negative integers and $0\leq r_{3}$,
$r_{4}<q$. The summation in the probability on the left-hand side (LHS) of
\eqref{Bernstein5} can be rewritten into a sum of two different
summations according to whether the subscript $i$ satisfies
$-(q+r_{3})\leq i-j_{0}\leq q+r_{4}$ or not. By
using Markov's inequality, the last term on the RHS of the  inequality \eqref%
{Bernstein5} comes from the summation in which $i$ satisfies the
condition. Through exploring Berbee's lemma and Bernstein's
inequality, the first two terms on the RHS of \eqref{Bernstein5} can be derived from the summation in
which $i$ does not satisfy the condition.
\endproof 

\noindent 
\textbf{Remark 1.}
The bound of $\sigma (\cdot)$ in \eqref{Bernstein5} can
be obtained from Davydov's inequality, H\"{o}lder's inequality and
$C_{r}$-inequality as follows
\begin{align}
\sigma (\cdot )\leq q \mbox{Var}\big(g(\xi _{1},\cdot
)\big)+2q\sum_{l=1}^{q}\big (\beta \left( l\big ) \right)
^{1-\frac{2}{r_{2}}}\big (\mathbb{E}\left\vert
g(\xi _{i},\cdot )-g_{1}(\cdot )\right\vert ^{r_{2}}\big)^{\frac{2}{r_{2}}%
}\leq Cq\big (\mathbb{E}\left\vert g(\xi _{i},\cdot )\right\vert
^{r_{2}}\big ) ^{\frac{2}{r_{2}}},  \label{Bernstein3}
\end{align}%
where $r_{2}$ satisfies that $\sum_{l=1}^{\infty }\left(\beta
\left(
l\right) \right)^{1-\frac{2}{r_{2}}}<\infty$ and $1-\frac{2}{r_{2}}=\frac{%
1+\varepsilon }{r}$ for a sufficiently small constant $\varepsilon
>0$. Without loss of generality (w.l.o.g.), $q$ is assumed to 
be an integer throughout the next three appendices
although it may not be sometimes according to its formula.
In some cases, if $\xi_{j_{0}}$ is replaced by a constant, then
\eqref{Bernstein5} still holds but its last term will be excluded.

\vspace{0.1cm}

\begin{lemma}\label{lemmaA2} 
 Let $U_{n}=\sum_{1\leq
i<j\leq n}h_{n}(\xi_{i},\xi_{j})$ be a degenerated
U-statistic with the symmetric kernel $h_{n}(\cdot ,\cdot
)$, i.e., for any $t\in \mathbb{R}$, $\mathbb{E}h
(\xi_{i},t)=0$. Then for $k\in \mathbb{N}$, there
exists a universal constant $C>0$ such that
\begin{equation}
\mathbb{E}U_{n}^{k}\leq Cn^{k}\left( 1+\sum_{i=1}^{n-1}i^{k}\beta _{i}^{1-\frac{1%
}{s}}\right) M_{sk}^{k},  \label{U}
\end{equation}%
where $s>1$ and
\begin{equation*}
M_{sk}=\sup_{(i_{1},i_{2}),\mathbb{P}}\left( \int \left\vert h_{n}(\xi
_{i_{1}},\xi_{i_{2}})\right\vert ^{sk}{\rm d}\mathbb{P}\right)
^{\frac{1}{sk}}
\end{equation*}%
with $\mathbb{P}$ being either the probability measure 
$\mathbb{P}_{(\xi_{i_{1}},\xi_{i_{2}})}$ or $\mathbb{P}_{\xi
_{i_{1}}}\otimes \mathbb{P}_{\xi_{i_{2}}}$.
\end{lemma}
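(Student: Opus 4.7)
The plan is to adapt the approach of Arcones (1998, Lemma 3)---which handles the case $k=2$---by iterating a Davydov--Berbee-type coupling inequality for the $\beta$-mixing structure together with the kernel degeneracy $\mathbb{E}h_n(\xi_i,t)=0$. I would first expand
$$\mathbb{E} U_n^k = \sum_{(i_1,j_1),\ldots,(i_k,j_k)} \mathbb{E}\prod_{\ell=1}^{k} h_n(\xi_{i_\ell},\xi_{j_\ell}),$$
with the pairs running over $1\le i_\ell<j_\ell\le n$, and for each summand sort the $2k$ indices into $\tau_1\le\cdots\le\tau_{2k}$ and identify the largest gap $\Delta=\max_{1\le m<2k}(\tau_{m+1}-\tau_{m})$, which will serve as the key parameter of the estimate.

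At the location of this largest gap I would split the product into a ``left block" $X$ and a ``right block" $Y$, measurable with respect to $\sigma$-algebras separated by $\Delta$ time units, and invoke Berbee's coupling construction: there exists $\widetilde{Y}\stackrel{d}{=}Y$ independent of $X$ with $\mathbb{P}(Y\ne\widetilde{Y})\le\beta_\Delta$. A H\"older argument with exponent $sk$ then yields
$$\bigl|\mathbb{E}[XY]-\mathbb{E}X\cdot\mathbb{E}Y\bigr|\le C\,\beta_\Delta^{1-1/s}\,M_{sk}^{k},$$
where the $L^{sk}$-norms of $X$ and $Y$ are controlled by $M_{sk}^k$ via generalized H\"older and the definition of $M_{sk}$. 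For the decoupled piece $\mathbb{E}X\cdot\mathbb{E}Y$, the degeneracy $\mathbb{E}h_n(\xi_i,t)=0$ forces the product to vanish unless \emph{every} pair $(i_\ell,j_\ell)$ sits entirely on one side of the cut. Iterating the same splitting at the successive next-largest gaps inside the surviving blocks produces a hierarchical decomposition in which each surviving term is either (a) a coupling-error term bounded by $\beta_{\Delta'}^{1-1/s}M_{sk}^k$ at some cut, or (b) a fully paired-cluster term whose maximum gap is effectively bounded.

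Finally I would group terms by the size $d$ of the maximum gap. The pair-completeness constraint imposed by the degeneracy is precisely what collapses the combinatorial count: once the ambient positions of the $k$ pair-clusters are fixed (contributing the $n^k$ factor) and the $k$ paired endpoints inside a cluster are required to lie within a window of size $d$, the internal degrees of freedom produce $O(d^k)$ arrangements rather than the naive $O(d^{2k-1})$ that would arise without enforced pairing. Summing $C n^k d^k \beta_d^{1-1/s} M_{sk}^k$ over $d=1,\ldots,n-1$ and adding the baseline $C n^k M_{sk}^k$ contribution from tightly-clustered tuples yields the claimed inequality. The main obstacle is precisely this combinatorial accounting with the degeneracy-induced pairing constraint; I would carry it out by induction on $k$, with the base case $k=1$ given by a single Davydov--Berbee estimate and $k=2$ being exactly Arcones's Lemma 3, and with the inductive step peeling off one pair at a time: at the widest remaining gap, the coupling step either absorbs a $\beta_d^{1-1/s}$ factor or, by degeneracy of the peeled factor, annihilates the decoupled leading term and reduces to a smaller instance of the statement.
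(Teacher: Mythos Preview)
Your overall plan---expand $\mathbb{E}U_n^k$, sort the $2k$ indices, apply a $\beta$-mixing coupling inequality at a large separation, and exploit degeneracy to kill the decoupled piece---matches the paper's strategy in spirit. But the paper executes it more directly, and your version has a bookkeeping gap in the combinatorics.

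The key difference is where the split is made. You cut at the largest \emph{gap} into two blocks and note that the decoupled term vanishes only when some pair straddles the cut, which forces you to iterate whenever all pairs survive intact. The paper instead isolates a single \emph{point}: for each sorted index $j_i$ set $d_i=\min\{j_i-j_{i-1},\,j_{i+1}-j_i\}$ (endpoints use their single adjacent gap), take $i_0$ to maximize $d_i$, and compare with the three-block configuration $(\,\text{indices}<j_{i_0},\ \{j_{i_0}\},\ \text{indices}>j_{i_0}\,)$ made mutually independent. Since $j_{i_0}$ is one argument of some $h_n$ factor and is now independent of the other argument (and of everything else), degeneracy kills the decoupled expectation in a single stroke---no recursion. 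Arcones's Lemma~2(ii) with H\"older then yields $|J(i_1,\ldots,i_{2k})|\le C\,\beta_{d_{i_0}}^{1-1/s}M_{sk}^{k}$ directly.

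The paper's counting is likewise different from your pair-cluster picture, and this is where your argument has a real gap. From the definition of $d_i$ one sees that $g_1=d_1\le d_{i_0}$ and $g_{2k-1}=d_{2k}\le d_{i_0}$, and that two consecutive gaps cannot both exceed $d_{i_0}$ (else the $d_i$ between them would exceed $d_{i_0}$). Hence at least $k$ of the $2k-1$ gaps are $\le d_{i_0}$. Fixing $d_{i_0}=d$, this bounds the number of ordered $2k$-tuples by $O(n^k d^k)$---$k-1$ free gaps plus the starting point give $n^k$, and $k$ constrained gaps give $d^k$---and summing $n^k d^k\beta_d^{1-1/s}M_{sk}^k$ over $d$ yields the lemma. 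Your ``$k$ pair-clusters each of diameter $\le d$'' picture is not what a two-block max-gap split delivers: pairs can interleave in sorted order, and the maximum gap between consecutive sorted indices is not the same as a pair's diameter, so the $O(d^k)$ count you assert does not follow from the iteration you describe. The single-point isolation sidesteps this entirely and turns the combinatorics into the short pigeonhole above.
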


\noindent
\begin{proof}
Let $J(i_{1},\ldots ,{i_{{2k}}})=\mathbb{E}[h_{n}(\xi _{i_{1}},\xi
_{i_{2}})\cdots h_{n}(\xi_{i_{2k-1}},\xi_{i_{2k}})]$. First, assume that $%
i_{1},\ldots ,i_{2k}$ are different values. Rearrange the initial sequence $%
i_{1},\ldots ,i_{2k}$ in natural order as $j_{1},\ldots
,j_{2k}$. Define $d_{1}=j_{2}-j_{1}$, $d_{2k}=j_{2k}-j_{2k-1}$ and
$d_{i}=\min \{j_{i}-j_{i-1},\ j_{i+1}-j_{i}\}$ for $i=2,\ldots
,2k-1$. If, $d_{i_{0}}$ is the largest number among
$\{d_{i},i=1,\ldots ,{2k}\}$, then we compare the initial sequence
$(i_{1},\ldots ,i_{2k})$ with the one having the
independent blocks $(i_{1},\ldots ,i_{i_{0}-1})$, $i_{i_{0}}$ and $%
(i_{i_{0}+1},\ldots ,i_{2k})$ and the identical block distributions.
Also,
it can be inferred that there exist at least $k$ numbers among $%
(d_{1},\ldots ,d_{2k})$ which are in no excess of $d_{i_{0}}$. Thus,
by part \textit{ii)}  of \citet[Lemma 2]{arcones98}, monotonicity on $\beta _{i}$
and H\"{o}lder's inequality, we obtain that
\begin{equation*}
\sum_{i_{1}\neq \ldots \neq i_{2k}}J(i_{1},\ldots ,i_{2k})\leq
Cn^{k}\sum_{i=1}^{n-1}i^{k}\beta _{i}^{1-\frac{1}{s}}M_{sk}^{k}.
\end{equation*}%
The other cases of $i_{1},\ldots ,i_{2k}$ can be dealt with
similarly.
\end{proof}

\noindent \textbf{Remark 2.} % remark on p.91
 Usually, $s$ is taken to be $1-\frac{%
k+1+\varepsilon }{r}$ with $r>k+1$.

\subsection{Bahadur Representation}\label{sec:bahadur}
\renewcommand{\theequation}{B.\arabic{equation}}
\newtheorem{thm}{Theorem}[subsection]
\setcounter{equation}{0}
\setcounter{lemma}{0}
\setcounter{section}{1}
\setcounter {subsection}{2}
\renewcommand{\thetheorem}{B,\arabic{section}}
\renewcommand{\thetheorem}{\thesubsection\arabic{section}}

In this Appendix, and following an identical line as that of \citet{honda04} and \citet{chaudhuri91},
we  address the uniformly strong
Bahadur representation for the conditional quantile and its
derivatives at a random point. Let
\begin{equation*}
V_{ij}(t_{1,u},\beta) =K_{ij,t_{1,u}}\left\{\rho
_{\alpha }\left( Y_{i}-\beta_{j,t_{1,u}}^{\mbox{\tiny{T}}}
A_{ij,t_{1,u}}\right)
-\rho_{\alpha }\left( Y_{i}-\beta^{\mbox{\tiny{T}}}
A_{ij,t_{1,u}}\right)
\right\}.
\end{equation*}

\begin{lemma}\label{lemmaB3}  % lemma A4 on p. 92
If conditions (B1)--(B4) and $r\geq d+%
\frac{1}{2}-\frac{d}{p}+\frac{17p+d-3}{2p(d-1-dp^{-1})}$
hold, then for any $1\leq j\leq n$, it holds almost surely
and uniformly both on $\mathcal{X}_{1,u}$\ and on
$|\beta -\beta_{j,t_{1,u}}| =\frac{B}{\sqrt{nh^{(1+\frac{1}{r})d+\varepsilon }}}$%
 that 
\begin{equation}
\left\vert \sum_{i=1}^{n}(V_{ij}(t_{1,u},\beta) -\mathbb{E}%
_{j}V_{ij}(t_{1,u},\beta)  \right\vert \leq
Bh^{-\left( \frac{d}{r}+{\varepsilon }\right) },  \label{eq}
\end{equation}%
 where $B>0$ is a constant and may be chosen
large.
\end{lemma}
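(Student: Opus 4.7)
The plan is to treat the sum on the LHS of \eqref{eq} as a centered empirical process indexed by the pair $(t_{1,u},\beta)$ living on the compact set $\mathcal{X}_{1,u}\times\{\beta:|\beta-\beta_{j,t_{1,u}}|=\delta_n\}$, where $\delta_n=B/\sqrt{nh^{(1+1/r)d+\varepsilon}}$. I would follow the standard discretization plus exponential inequality scheme used by Masry (1996), Chaudhuri (1991) and Honda (2004) and adapt it to the $\beta$-mixing setting through Lemma \ref{lemmaA1}. First I would cover the index set by a finite grid of cardinality $N_n=O(n^{\alpha_0})$ for a sufficiently large $\alpha_0>0$, so that adjacent grid points are at distance $O(n^{-\alpha_0/\dim})$ apart. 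Then I would control the maximum over the grid by Lemma \ref{lemmaA1} and handle the residual oscillation by a Lipschitz argument in $(t_{1,u},\beta)$.

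For the Bernstein step, I would first note the pointwise bounds on the summand
\[
U_i := V_{ij}(t_{1,u},\beta)-\mathbb{E}_j V_{ij}(t_{1,u},\beta),
\]
using the Lipschitz property of $\rho_\alpha$ and the compact support of $K$: one has $|U_i|\le CK_{ij,t_{1,u}}|(\beta-\beta_{j,t_{1,u}})^{\mbox{\tiny T}}A_{ij,t_{1,u}}|\le C\delta_n\,\mathbb{I}(|X_i-\widetilde X_j|\le h)$, hence $|U_i|\le C\delta_n$ and $\mathbb{E}|U_i|^\ell\le C\delta_n^\ell h^d$ for $\ell\ge 1$. Feeding these into Remark 1 and taking the block length $q\asymp h^{-\gamma}$ for a suitable $\gamma>0$, I would get $\sigma(\xi_{j_0})\le Cq\delta_n^{2(1-1/r_2)}h^{2d/r_2}$ with $1-2/r_2=(1+\varepsilon)/r$, and then verify with $x=Bh^{-(d/r+\varepsilon)}$ that the Gaussian factor in \eqref{Bernstein5} is of order $\exp\{-c\,h^{-(d/r+\varepsilon)}\}$, which dominates the mixing residual $(n/q)\beta(q)$ and the moment residual $q^{r_1-1}\sum_{|i-j_0|<2q}\mathbb{E}|U_i|^{r_1}/x^{r_1}$ provided $r$ is at least as large as $d+\tfrac12-d/p+(17p+d-3)/(2p(d-1-d/p))$, which is exactly the lower bound on $r$ imposed in the statement.

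Having controlled each grid point, I would upgrade to a uniform statement by a continuity argument. Since $K(\cdot)$ has bounded derivatives by condition (B2) and $\rho_\alpha$ is $1$-Lipschitz, the increment of $V_{ij}(t_{1,u},\beta)-\mathbb{E}_jV_{ij}(t_{1,u},\beta)$ between two grid points at distance $\eta$ is bounded by $C n\eta(h^{-1}\delta_n+\delta_n h^{-2})$; choosing $\eta=n^{-\alpha_0}$ with $\alpha_0$ large enough makes the oscillation term negligible compared with $Bh^{-(d/r+\varepsilon)}$. Multiplying the pointwise tail bound by the cardinality $N_n=O(n^{\alpha_0})$ still gives a summable series in $n$, so Borel--Cantelli delivers the a.s. uniform conclusion. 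The argument runs identically for any fixed $1\le j\le n$ because the block decomposition underlying Lemma \ref{lemmaA1} only needs strict stationarity of $\{\xi_i\}$.

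The main obstacle is the delicate tuning of $q$, $r_1$, $r_2$ and $\alpha_0$ so that (i) the Gaussian exponent in Bernstein beats the number of grid points, (ii) the mixing residual $(n/q)\beta(q)=O(nq^{-1-r})$ is summable in $n$, (iii) the higher-moment residual is of smaller order than $Bh^{-(d/r+\varepsilon)}$, and (iv) the Lipschitz oscillation across the grid remains subdominant. Each of these four conditions translates into a linear constraint on $\kappa$ (the bandwidth exponent) and $r$, and the claimed threshold on $r$ is precisely what one extracts by intersecting all four constraints; verifying this feasibility is the arithmetically heaviest but purely routine part of the proof.
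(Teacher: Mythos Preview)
Your overall scheme---Bernstein via Lemma~A1 at grid points, a covering of $\mathcal{X}_{1,u}\times\{|\beta-\beta_{j,t_{1,u}}|=\delta_n\}$, oscillation control between grid points, and Borel--Cantelli---is the same skeleton the paper uses, and your pointwise moment bounds are essentially right.

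The gap is in the oscillation step. First, your Lipschitz constant is too optimistic: the increment of the bracket $\rho_\alpha(\varepsilon_i+r_{ij,t_{1,u}})-\rho_\alpha(\varepsilon_i+r_{ij,t_{1,u}}+P_{ij,t_{1,u}})$ across a mesh of size $\eta$ is of order $C\eta$, not $C\eta\delta_n h^{-2}$, because $\rho_\alpha$ is only Lipschitz and its second difference does not pick up the extra factor $\delta_n$. Second, and more importantly, you bound the oscillation of the \emph{sum} deterministically via $\sum_i\mathbb{I}(|X_i-\widetilde X_j|\le h)\le n$. This forces the mesh $\eta$ to be extremely fine and hence $N_n$ to be very large. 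But the mixing residual $(n/q)\beta(q)$ in Lemma~A1 is a fixed power of $n$ determined by $q$ and $r$; it does not shrink with $B$ or $r_1$, and $q$ cannot be pushed beyond order $\sqrt{nh^{(1-1/r)d}}$ without ruining the Bernstein exponent. The summability requirement $N_n\cdot(n/q)\beta(q)=o(n^{-1})$ then yields a strictly larger lower bound on $r$ than the one stated, so your constraint (ii) and (iv) cannot both be met at the claimed threshold.

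The paper closes this gap by controlling the oscillation \emph{stochastically}: it proves the per-term bound
\[
|V_{ij}(s,\gamma)-V_{ij}(t_{1,u},\beta)|\le C\ell_n\,\mathbb{I}\big(|X_i-\widetilde X_j|\le h\big)+C\delta_n\,\mathbb{I}\big(h-\ell_n\le|X_i-\widetilde X_j|\le h+\ell_n\big),
\]
and then applies Lemma~A1 \emph{again} to each of the two indicator sums. This allows the coarse mesh $\ell_n=\delta_n^{2}$, so that $N_n=O(\ell_n^{-(d+3)/2})$, and the resulting constraint $N_n\cdot(n/q)\beta(q)\le C/(n(\log n)^2)$ is exactly what produces the stated lower bound on $r$. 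Note also that the second (thin-shell) indicator---accounting for the kernel support switching on or off near $|X_i-\widetilde X_j|=h$---is missing from your treatment and requires its own Bernstein step.
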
 
\begin{proof}
By using Lemma \ref{lemmaA1} and taking $q=\sqrt{nh^{\left(
1-\frac{1}{r}\right)d}}$, we have that
\begin{equation}
\mathbb{P}\left( \left\vert \sum\limits_{i=1}^{n}\big(V_{ij}(
t_{1,u},\beta) -\mathbb{E}V_{ij}(t_{1,u},\beta )
\big)
\right\vert >CBh^{-(\frac{d}{r}+\varepsilon )}\right) \leq 2n^{-CB}+\frac{n}{%
q}\beta (q) +C\left( 2h^{\frac{\varepsilon }{2}}\right)
^{r_{1}}, \label{in1}
\end{equation}%
where the three terms on the RHS of \eqref{in1} are
derived from their counterparts in \eqref{Bernstein5}, respectively.
More specifically,
in the first term the relationship $\sigma (Z_{j})\leq Cqh^{\frac{2d}{r_{2}}%
}\delta ^{2}$, which follows from \eqref{Bernstein3} and condition (B1), is used, where $%
\delta =\frac{B}{\sqrt{nh^{(1+\frac{1}{r})d+\varepsilon }}}$ and $1-\frac{2}{%
r_{2}}=\frac{1}{r}+\frac{\varepsilon }{2d}$. And in the third term,
the relationship $\mathbb{E}|V_{ij}|^{r_{1}}\leq C\delta^{r_{1}}$
is used.

In order to prove uniformity related to $t_{1,u}$ in \eqref{eq}, we divide $%
\mathcal{X}_{1,u}$ into smaller squares with the side length $\ell
_{n}=\delta ^{2}$. For any point $s=(s_{1},s_{u})\in
\mathcal{X}_{1,u}$, let $t_{1,u}\in \mathcal{X}_{1,u}$ be the
nearest grid points close to $s$. Then, $\left\vert
t_{1,u}-s\right\vert \leq \ell _{n}/\sqrt{2}$. To prove uniformity
related to $\beta $ in \eqref{eq}, it is necessary to divide the two
spheres $|\beta - \beta_{j,t_{1,u}}|=B\delta $ 
and $\left\vert
\gamma
-\beta_{j,s}\right\vert =B\delta $ into smaller cells with the radius $%
d_{1}=\ell_{n}$ such that the two divisions are location
equivariant. It can be seen that the total number of such kind of
cells related to each
sphere is equal to $O(\ell _{n}^{-\frac{d-1}{2}}) $. For any $%
\alpha $ in the sphere $\left\vert \gamma -\beta_{j,s}\right\vert =B\delta $%
, let $\beta$ in the sphere $|\beta -\beta
_{j,t_{1,u}}| =B\delta $ be the nearest grid point to the
point which is in the sphere $|\beta -\beta_{j,t_{1,u}}|
=B\delta $ and is equivariant to $\gamma$. Then, it can be inferred that $|\beta _{j,t_{1,u}}-\beta _{j,s}| \leq C\ell _{n}$ and $%
|\gamma -\beta | \leq C(\ell
_{n}+d_{1}) \leq C\ell_{n}$. 

Next we  prove that for
$s$ and $t_{1,u}$ mentioned above, it holds that
\begin{equation}
\left\vert V_{ij}(s,\gamma) -V_{ij}(t_{1,u},\beta
)
\right\vert \leq CB\ell_{n}\mathbb{I}_{\left( \left\vert X_{i}-\widetilde{X}%
_{j}\right\vert \leq h\right) }+B\delta\mathbb{I}_{\left( h-\ell
_{n}\leq \left\vert X_{i}-\widetilde{X}_{j}\right\vert \leq h+\ell
_{n}\right) }. \label{eq2}
\end{equation}%
In fact, if the two events $|X_{i}-\widetilde{X}_{j,t_{1,u}}|
\leq h$ and $|X_{i}-\widetilde{X}_{j,s}| \leq
h$ occur simultaneously, then from  (B2), it can be inferred that
\begin{equation}
\big|(K_{ij,t_{1,u}}-K_{ij,s})(\rho
_{\alpha}(\varepsilon_{i}+r_{i,j,t_{1,u}}) -\rho
_{\alpha }(\varepsilon_{i}+r_{i,j,t_{1,u}}+P_{i,j,t_{1,u}}) \big) 
|\leq C\ell _{n}\mathbb{I}_{(|X_{i}-\widetilde{X}_{j,t_{1,u}|}| \leq h)}  \label{K12}
\end{equation}
in view of $\frac{\delta }{h}\rightarrow 0$. Noting that
\begin{equation}
\left\vert r_{ij,t_{1,u}}-r_{ij,s}\right\vert \leq \left\vert \beta
_{j,t_{1,u}}-\beta _{j,s}\right\vert \left\vert
A_{ij,t_{1,u}}\right\vert +\left\vert \beta _{j,s}\right\vert
\left\vert A_{ij,t_{1,u}}-A_{ij,s}\right\vert \leq C\ell_{n},
\label{res1}
\end{equation}%
we have
\begin{align}
& \big |K_{ij,s}\big\{\big( \rho_{\alpha}
(\varepsilon_{i}+r_{i,j,t_{1,u}}) 
-\rho_{\alpha}
(\varepsilon_{i}+r_{i,j,t_{1,u}}+P_{i,j,t_{1,u}})\big) \nonumber \\ 
&- 
\big((\rho_{\alpha} 
\varepsilon_{i}+r_{i,j,s})
 -\rho_{\alpha}(\varepsilon_{i}+r_{i,j,s}+P_{i,j,s}^{\gamma }) \big) \big\}  \big | \leq C\ell_{n}\mathbb{I}_{X_{i}-\widetilde{X}_{j,t_{1,u}}|\leq h) }.  \label{K2}
 \end{align}
From  \eqref{K12} and \eqref{K2}, it follows that
\begin{equation*}
\left\vert V_{ij}(s,\gamma) -V_{ij}(t_{1,u},\beta)
\right\vert \leq C\ell_{n}\mathbb{I}_{(|X_{i}-\widetilde{X}%
_{j,t_{1,u}}| \leq h) }.
\end{equation*}%
If $|X_{i}-\widetilde{X}_{j,t_{1,u}}| \leq h$ and $%
|X_{i}-\widetilde{X}_{j,s}| >h$ occur simultaneously or $%
|X_{i}-\widetilde{X}_{j,t_{1,u}}| >h$ and $|X_{i}-%
\widetilde{X}_{j,s}| \leq h$ occur simultaneously, then it can
be inferred similarly that
\begin{equation*}
|V_{ij}(s,\gamma) -V_{ij}(t_{1,u},\beta)
| \leq C\delta\mathbb{I}_{ h-\ell _{n}\leq \left\vert X_{i}-%
\widetilde{X}_{j,t_{1,u}}\right\vert \leq h+\ell _{n}t) }.
\end{equation*}%
From the two cases above, we know that  \eqref{eq2} holds. Thus, we
consider the following two probabilities
\begin{equation*}
\mathbb{P}\left(\sum_{i=1}^{n}\mathbb{I}_{\left( \left\vert X_{i}-\widetilde{X}%
_{j}\right\vert \leq h\right) }\geq \frac{CB}{\ell _{n}}h^{-\left( \frac{d}{r%
}+\varepsilon \right) }\right)\,\, \mbox{and}\,\, \mathbb{P}\left(
\sum_{i=1}^{n}\mathbb{I}_{\left( h-\ell_{n}\leq \left\vert X_{i}-\widetilde{X}%
_{j}\right\vert \leq h+\ell _{n}\right) }\geq CB\delta ^{-1}h^{-\left( \frac{%
d}{r}+\varepsilon \right) }\right).
\end{equation*}%
Similar as the proof of \eqref{in1}, the two probabilities above can
also be bounded by the RHS of \eqref{in1}. For the first
probability, to compare the two terms of the denominator in the
first term on the RHS of \eqref{Bernstein5}, the fact
that
\begin{equation*}
\frac{n}{2q}\cdot q\left(\mathbb{E}_{j}\mathbb{I}_{\left( \left\vert X_{i}-%
\widetilde{X}_{j}\right\vert \leq h\right) }\right)^{\frac{2}{r_{2}}}\leq Cq%
\frac{h}{\ell _{n}}h^{-\left( \frac{d}{r}+\varepsilon \right) },
\end{equation*}%
where $r_{2}$ satisfies that $1-\frac{2}{r_{2}}=\frac{1}{r}+\frac{%
\varepsilon }{2d}$, is used in view of $nh^{\left(
1+\frac{1}{r}\right) d+\varepsilon }\rightarrow \infty $. For the
second probability, we use the fact that
\begin{equation*}
\frac{n}{2q}\cdot q\left(\mathbb{E}_{j}\mathbb{I}_{\left( h-\ell
_{n}\leq \left\vert X_{i}-\widetilde{X}_{j,t_{1,u}}\right\vert \leq
h+\ell _{n}\right) }\right) ^{\frac{2}{r_{2}}}\leq \frac{n}{2q}\cdot
q\left( C\ell
_{n}h^{d-1}\right) ^{\frac{2}{r_{2}}}\leq Cq\delta ^{-1}h^{-\left( \frac{d}{r%
}+\varepsilon \right) },
\end{equation*}%
where $r_{2}$ satisfies that $1-\frac{2}{r_{2}}=\frac{1}{r}+\varepsilon _{0}$%
, and the last inequality follows from $\kappa
<\frac{1-\frac{d}{r}}{d\left(
1-\frac{1}{r}-\frac{d}{r^{2}}+\frac{1}{d}\right) }$, and
$\varepsilon_{0}$ and $\varepsilon$ are sufficiently small
positive reals. Since the total
number of small cells related to the divisions both on the domain $\mathcal{X%
}_{1,u}$ and the sphere $|\beta -\beta_{j}|=\delta $
is $O(\ell_{n}^{-2}\cdot \ell_{n}^{-\frac{d-1}{2}}) =O(\ell_{n}^{-\frac{%
d+3}{2}})$. Multiplying by $O(\ell_{n}^{-\frac{d+3}{2}})$ on
both sides of the inequality
\eqref{in1}, then we can see that its RHS is controlled
by $\frac{C}{n\left( \log n\right) ^{2}}$, provided we choose $B$
and $r_{1}$ sufficiently large. Also, the inequality
\begin{equation}
\ell_{n}^{-\frac{d+3}{2}}\cdot\frac{n}{q}\cdot \beta (q) \leq \frac{C}{%
n\left( \log n\right) ^{2}}  \label{in3}
\end{equation}%
follows from  (B3) and the condition on $r$. Then,
by the Borel-Cantelli Lemma, 
\eqref{eq} holds.
\end{proof}

\begin{lemma} \citep[Lemma 3.2]{honda04}
Under conditions (B1)--(B4), it holds uniformly on 
$\mathcal{X}_{1,u}$\textit{\ that }\[\left\vert
\widehat{\beta}_{j,t_{1,u}}-\beta
_{j,t_{1,u}}\right\vert =O\left( \frac{1}{\sqrt{nh^{(1+\frac{1}{r}%
)d+\varepsilon }}}\right) .\]
\end{lemma}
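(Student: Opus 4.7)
The plan is to exploit convexity of the localised check-function loss. Write $L_n(\beta) = \sum_{i=1}^n K_{ij,t_{1,u}} \rho_\alpha(Y_i - \beta^{\top} A_{ij,t_{1,u}})$, so that $\widehat{\beta}_{j,t_{1,u}}$ is its minimiser and $L_n(\beta_{j,t_{1,u}}) - L_n(\beta) = \sum_{i=1}^n V_{ij}(t_{1,u},\beta)$. Set $\delta_n := (nh^{(1+1/r)d+\varepsilon})^{-1/2}$. Since $L_n$ is convex in $\beta$, it suffices to prove that, with probability one and for $B$ sufficiently large, $\sum_{i=1}^n V_{ij}(t_{1,u},\beta) < 0$ holds uniformly over $t_{1,u}\in\mathcal{X}_{1,u}$ and over $\beta$ on the sphere $|\beta-\beta_{j,t_{1,u}}| = B\delta_n$. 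A standard ray-monotonicity argument from $\beta_{j,t_{1,u}}$ to the putative minimiser then forces $\widehat{\beta}_{j,t_{1,u}}$ into the interior of the ball of radius $B\delta_n$.

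First, split $\sum_i V_{ij} = \sum_i \mathbb{E}_j V_{ij} + \sum_i (V_{ij} - \mathbb{E}_j V_{ij})$. For the deterministic piece, apply Knight's identity $\rho_\alpha(y-s) - \rho_\alpha(y) = -s(\alpha - \mathbb{I}(y\le 0)) + \int_0^s [\mathbb{I}(y\le u) - \mathbb{I}(y\le 0)]\,du$ with $y = \varepsilon_i + r_{ij,t_{1,u}}$ and $s = P_{ij,t_{1,u}}$. Since $q(X_i)$ is the $\alpha$-th conditional quantile so that $\mathbb{P}(\varepsilon_i\le 0|X_i)=\alpha$, conditions (B1), (B4) and (B5) allow a Taylor expansion of the resulting $G(X_i,\cdot)$ around $0$ with derivative $g(X_i,0)$. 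Summing over $i$ and rewriting in terms of the Gram-type matrix $Q_{jn,t_{1,u}}$ yields
\[
\sum_{i=1}^n \mathbb{E}_j V_{ij}(t_{1,u},\beta) = -\tfrac{1}{2}\, nh^d \,(\beta-\beta_{j,t_{1,u}})^{\top} Q_{jn,t_{1,u}}\,(\beta-\beta_{j,t_{1,u}}) + R_n(t_{1,u},\beta),
\]
with $Q_{jn,t_{1,u}}$ uniformly positive definite by (B2) and (B5), and $R_n$ collecting the Taylor-remainder bias of order $nh^{d+p}\delta_n$. Thus on the sphere the leading term is bounded above by $-c\,B^2 h^{-d/r-\varepsilon}$ for some $c>0$, while the bandwidth restriction in (B3) (which ensures $h^p = o(\delta_n)$) renders $R_n$ of strictly smaller order.

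Lemma~B.3 supplies precisely the uniform control needed for the stochastic piece: almost surely, uniformly in $(t_{1,u},\beta)$ on $\mathcal{X}_{1,u}$ and on the sphere of radius $B\delta_n$, one has $\big|\sum_i (V_{ij}-\mathbb{E}_j V_{ij})\big| \le B' h^{-d/r-\varepsilon}$. Choosing $B$ large compared to $B'$ makes the quadratic lower bound dominate both the stochastic fluctuation and $R_n$, so $\sum_i V_{ij}(t_{1,u},\beta) < 0$ on the sphere. Convexity then closes the argument and gives $|\widehat{\beta}_{j,t_{1,u}} - \beta_{j,t_{1,u}}| \le B\delta_n$ uniformly in $t_{1,u}\in\mathcal{X}_{1,u}$, which is the claim.

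The main obstacle is the simultaneous balancing of three scales on the sphere: the quadratic contribution of order $B^2 h^{-d/r-\varepsilon}$, the Taylor bias of order $B\,nh^{d+p}\delta_n$, and the stochastic fluctuation of order $h^{-d/r-\varepsilon}$ from Lemma~B.3. All three must be reconciled under the bandwidth and smoothness assumptions in (B3), which is precisely what pins down the exponent appearing in $\delta_n$. A secondary but routine issue is lifting pointwise control to uniformity over the compact set $\mathcal{X}_{1,u}$ and the sphere, but this is absorbed by the grid/bracketing plus Borel--Cantelli machinery already built into the proof of Lemma~B.3.
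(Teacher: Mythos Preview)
The paper does not give its own proof of this lemma; it is quoted directly from Honda (2004, Lemma~3.2). Your convexity argument---show that the localised check-loss is strictly larger on the sphere $|\beta-\beta_{j,t_{1,u}}|=B\delta_n$ than at the centre, then use convexity to trap the minimiser---is precisely the standard route and is the one Honda uses. Your decomposition via Knight's identity into a negative-definite quadratic $-\tfrac12 nh^d(\beta-\beta_j)^\top Q_{jn}(\beta-\beta_j)$, a bias remainder, and the centred fluctuation handled by Lemma~B.3 is correct, and the uniformity over $\mathcal X_{1,u}$ is indeed inherited from the grid-plus-Borel--Cantelli machinery already embedded in Lemma~B.3.

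One point deserves tightening. Your parenthetical that ``(B3) ensures $h^p=o(\delta_n)$'' is not what the \emph{lower} bound in (B3) delivers: a direct comparison shows
\[
\frac{1-1/r}{4p+d-d/r^2}\;<\;\frac{1}{2p+d(1+1/r)},
\]
so $\kappa$ satisfying only the lower bound in (B3) need not force $nh^{2p+(1+1/r)d}\to 0$. What actually closes the argument is a scaling observation you almost make: on the sphere, the quadratic piece is of order $B^2 h^{-d/r-\varepsilon}$, while both the bias term and the stochastic bound from Lemma~B.3 scale like $B h^{-d/r-\varepsilon}$ provided $nh^{d+2p+d/r+\varepsilon}=O(1)$. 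Hence one only needs boundedness, not smallness, of this quantity, and then taking $B$ large makes the quadratic dominate. You should replace the $o(\delta_n)$ claim by this $B$ versus $B^2$ comparison and state explicitly the bandwidth inequality $nh^{d+2p+d/r+\varepsilon}=O(1)$ that is being used (this is what Honda's framework assumes; in the present paper it sits inside the admissible $\kappa$-range of (B3) rather than being forced by the lower endpoint). With that adjustment the proof is complete.
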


Let
\begin{equation}
\Delta_{ij}(t_{1,u},\beta)
=K_{ij,t_{1,u}}A_{ij,t_{1,u}}\left[\mathbb{I}\left( \varepsilon
_{i}\leq -r_{ij,t_{1,u}}-P_{ij,t_{1,u}}\right) -\mathbb{I}\left(
\varepsilon _{i}\leq -r_{ij,t_{1,u}}\right) \right]. \label{W}
\end{equation}

\begin{lemma}\label{lemmaA5} 
 Under conditions (B1)--(B4) and $r\geq
\max \left\{ d-7+\frac{2d^{2}-4-4d}{p}-\frac{22p+6}{dp},d\right\}
$, with
probability one, it holds uniformly on $t_{1,u}\in \mathcal{X}_{1,u}$
and the sphere $\left\vert \beta -\beta _{j,t_{1,u}}\right\vert =%
\frac{B}{\sqrt{nh^{(1+\frac{1}{r})d+\varepsilon }}}$ that 
\begin{equation}
\left\vert \sum_{i=1}^{n}\big(\Delta _{ij}(t_{1,u},\beta) -
\mathbb{E}_{j}\Delta_{ij}(t_{1,u},\beta) \big)
\right\vert \leq B\left( n^{1+\frac{1}{r}+\varepsilon }h^{d\left(
1-\frac{1}{r}\right)^{2}}\right)^{\frac{1}{4}}.  \label{Bahadur4}
\end{equation}%
\end{lemma}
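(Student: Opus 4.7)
The plan is to mimic the proof of Lemma B3 line by line, with $V_{ij}$ replaced by $\Delta_{ij}$. The structural difference is that $\Delta_{ij}$ is itself an indicator-type random variable, bounded by $|K_{ij,t_{1,u}}A_{ij,t_{1,u}}|\le C$, whose nonvanishing part is supported on the narrow strip where $\varepsilon_i$ lies between $-r_{ij,t_{1,u}}$ and $-r_{ij,t_{1,u}}-P_{ij,t_{1,u}}$. Since on the kernel support $|P_{ij,t_{1,u}}|\le |A_{ij,t_{1,u}}|\,\delta\le C\delta$ with $\delta=B/\sqrt{nh^{(1+1/r)d+\varepsilon}}$, condition (B4) gives the key moment input
\[
\mathbb{E}_j|\Delta_{ij}(t_{1,u},\beta)|^{r_2}\ \le\ C\,\mathbb{P}_j\bigl(|X_i-\widetilde{X}_j|\le h,\ |\varepsilon_i+r_{ij,t_{1,u}}|\le C\delta\bigr)\ \le\ C\,\delta\,h^{d},\qquad r_2\ge 1.
\]

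First, I would fix $(t_{1,u},\beta)$ and apply Lemma A1 to $\sum_i(\Delta_{ij}-\mathbb{E}_j\Delta_{ij})$ with block length $q$ tuned so that the Gaussian term matches the target level $x=CB(n^{1+1/r+\varepsilon}h^{d(1-1/r)^2})^{1/4}$. Combining the moment bound above with \eqref{Bernstein3} yields $\sigma(\xi_{j_0})\le Cq(\delta h^d)^{2/r_2}$ with $1-2/r_2=1/r+\varepsilon/(2d)$, and the same three-term structure of \eqref{in1} will bound the pointwise tail by $2n^{-CB}+(n/q)\beta(q)+C(h^{\varepsilon/2})^{r_1}$ for $r_1$ chosen sufficiently large.

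Second, I would address uniformity by the covering device used for Lemma B3: a grid of mesh $\ell_n=\delta^2$ on $\mathcal{X}_{1,u}$ combined with cells of radius $\ell_n$ on the sphere $|\beta-\beta_{j,t_{1,u}}|=\delta$, yielding $O(\ell_n^{-(d+3)/2})$ anchor points. The analog of the oscillation inequality \eqref{eq2} will now take the form
\[
|\Delta_{ij}(s,\gamma)-\Delta_{ij}(t_{1,u},\beta)|\ \le\ C\,\mathbb{I}(\varepsilon_i\in\mathcal{S}_{ij}^{*})\ +\ C\,\mathbb{I}(h-\ell_n\le|X_i-\widetilde{X}_j|\le h+\ell_n),
\]
where $\mathcal{S}_{ij}^{*}$ is a strip of width $O(\ell_n)$ around the two endpoints of the original $\varepsilon_i$-strip. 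This is because \eqref{res1} and $|\gamma-\beta|\le C\ell_n$ together shift those endpoints by at most $O(\ell_n)$; unlike in Lemma B3, the oscillation cannot be replaced by a smooth bound of order $\ell_n$ times an indicator, because of the jump in $\Delta_{ij}$. Bounds on the two indicator sums will follow from a second pass through Lemma A1 with $r_2$-moments $C\ell_n h^d$ and $C\ell_n h^{d-1}$ respectively, at the same target level $x$.

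Finally, multiplying the pointwise and oscillation tails by the covering number $O(\ell_n^{-(d+3)/2})$ and applying Borel-Cantelli completes the argument. The main obstacle will be the joint tuning of $q$, $\ell_n$, and $r_1$: the Gaussian contribution must deliver the $n^{1/4}$ rate inside $x$; the $\beta$-mixing contribution $\ell_n^{-(d+3)/2}(n/q)\beta(q)$ must be summable, paralleling \eqref{in3}; and the Markov term with exponent $r_1$ must be negligible. The numerical hypothesis $r\ge\max\{d-7+(2d^2-4-4d)/p-(22p+6)/(dp),\,d\}$ stated in the lemma is precisely what allows the mixing bound to absorb the inflation by the covering number and the exponent $(1-1/r)^2$ to appear in the final rate.
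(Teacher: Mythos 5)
Your proposal takes essentially the same route as the paper: apply Lemma A1 pointwise using the moment input $\mathbb{E}_j|\Delta_{ij}|^{r_2}\le C\delta h^d$, cover $\mathcal{X}_{1,u}$ and the sphere by a grid, control the oscillation over each cell, and finish with Borel--Cantelli. The moment input and the first (pointwise) pass are fine.

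There is, however, a genuine gap in your oscillation bound. You claim
\[
|\Delta_{ij}(s,\gamma)-\Delta_{ij}(t_{1,u},\beta)|\ \le\ C\,\mathbb{I}(\varepsilon_i\in\mathcal{S}_{ij}^{*})\ +\ C\,\mathbb{I}\bigl(h-\ell_n\le|X_i-\widetilde{X}_j|\le h+\ell_n\bigr),
\]
where $\mathcal{S}_{ij}^{*}$ is a strip of width $O(\ell_n)$ around the two endpoints of the $\varepsilon_i$-strip. This misses a term: when $\varepsilon_i$ lies in the \emph{interior} of the (width-$O(\delta)$) strip, both $\Delta_{ij}(s,\gamma)$ and $\Delta_{ij}(t_{1,u},\beta)$ are nonzero and their difference is governed by the oscillation of the prefactor $K_{ij,t_{1,u}}A_{ij,t_{1,u}}$, which is Lipschitz with constant $O(1/h)$, contributing $O(\ell_n/h)$ on the whole strip. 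The paper's oscillation bound therefore contains the extra term $\frac{\ell_n}{h}\mathbb{I}_{(|\varepsilon_i+r_{ij,t_{1,u}}|\le\delta)}$, which your decomposition omits; your bound as written is simply false on the interior of the strip. A related (smaller) looseness: the paper also intersects the kernel-boundary indicator with the $\varepsilon_i$-strip indicator $\mathbb{I}_{(|\varepsilon_i+r_{ij,t_{1,u}}|\le C(\ell_n+\delta))}$, giving the tighter moment $C(\ell_n+\delta)\ell_n h^{d-1}$ rather than your $C\ell_n h^{d-1}$; without that factor the Bernstein variance term is unnecessarily large.

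Finally, you take the mesh $\ell_n=\delta^2$ as in Lemma B3, whereas the paper deliberately switches to $\ell_n=\delta h$ (and $q=\delta_n n^{-\varepsilon/2}$) in this lemma. Since typically $\delta\ll h$, your covering number $O(\delta^{-(d+3)})$ is larger than the paper's $O((\delta h)^{-(d+3)/2})$, which makes the summability requirement in the Borel--Cantelli step strictly harder under the stated lower bound on $r$; it is not at all clear that your choice of $\ell_n$ is compatible with the hypothesis $r\ge\max\{d-7+\frac{2d^2-4-4d}{p}-\frac{22p+6}{dp},d\}$. The mesh must be retuned jointly with $q$ exactly because $\Delta_{ij}$ is not Lipschitz, and your proposal does not carry out that accounting.
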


\begin{theorem}\label{theoremA6} 
Under conditions (B1)--(B4),
the following strong Bahadur representation  
\begin{align}
\widehat{\beta}_{j,t_{1,u}}-\beta _{j,t_{1,u}}&=\frac{Q_{jn,t_{1,u}}^{-1}}{nh^{d}}%
\sum_{i=1}^{n}K_{ij,t_{1,u}}A_{ij,t_{1,u}}\big ((1-
 \alpha)
-\mathbb{I}\left(
\varepsilon_{i}\leq -r_{i,j}\right) \big ) \notag\\
& +O\left( \left( n^{1-\frac{1}{3r%
}-\frac{2\varepsilon }{3}}h^{d\left(
1+\frac{2}{3r}-\frac{1}{3r^{2}}\right) }\right)
^{-\frac{3}{4}}\right)  \label{Bahadur}
\end{align}%
holds almost surely and uniformly for $1\leq j\leq n$, 
$t_{1}$\textit{\ and }$t_{u}$.
\end{theorem}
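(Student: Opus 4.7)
The plan is to start from the subgradient optimality condition for $\widehat{\beta}_{j,t_{1,u}}$, use the decomposition $Y_i-\beta^{\mbox{\tiny{T}}}A_{ij,t_{1,u}}=\varepsilon_i+r_{ij,t_{1,u}}-P_{ij,t_{1,u}}$ to split the score equation into a ``leading'' stochastic part plus a ``$\Delta_{ij}$'' part, replace the $\Delta$-part by its conditional mean via Lemma \ref{lemmaA5}, identify the conditional mean as a linearization around $\beta_{j,t_{1,u}}$ whose slope is precisely $-h^{d}Q_{jn,t_{1,u}}$, and finally invert. Throughout, uniformity in $(j,t_{1},t_{u})$ is inherited from Lemmas \ref{lemmaB3}--\ref{lemmaA5}, whose discretization arguments (grids of mesh $\ell_{n}=\delta^{2}$) already build it in.

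First I would write the first-order subgradient condition for the local check-function minimizer: up to a remainder of order $O(1)$ caused by at most $O(\dim\beta)$ active non-differentiability points,
\[
\sum_{i=1}^{n}K_{ij,t_{1,u}}A_{ij,t_{1,u}}\big[(1-\alpha)-\mathbb{I}(\varepsilon_{i}\leq P_{ij,t_{1,u}}^{\widehat{\beta}}-r_{ij,t_{1,u}})\big]=O(1).
\]
Splitting the indicator as $\mathbb{I}(\varepsilon_{i}\leq P_{ij}^{\widehat{\beta}}-r_{ij})=\mathbb{I}(\varepsilon_{i}\leq -r_{ij})+\{\mathbb{I}(\varepsilon_{i}\leq P_{ij}^{\widehat{\beta}}-r_{ij})-\mathbb{I}(\varepsilon_{i}\leq -r_{ij})\}$ and recognizing the second bracket as (a sign-adjusted instance of) the $\Delta_{ij}$ in \eqref{W}, the score equation becomes
\[
S_{j}:=\sum_{i=1}^{n}K_{ij,t_{1,u}}A_{ij,t_{1,u}}\big[(1-\alpha)-\mathbb{I}(\varepsilon_{i}\leq -r_{ij,t_{1,u}})\big]-\sum_{i=1}^{n}\Delta_{ij}(t_{1,u},\widehat{\beta})=O(1).
\]
By Lemma \ref{lemmaB3} and the preceding convergence-rate lemma, $\widehat{\beta}_{j,t_{1,u}}$ lies a.s.\ uniformly on the sphere where Lemma \ref{lemmaA5} applies, so we may replace $\sum_i\Delta_{ij}(t_{1,u},\widehat{\beta})$ by $\sum_i\mathbb{E}_j\Delta_{ij}(t_{1,u},\widehat{\beta})$ at the cost of the stochastic error $O\big((n^{1+1/r+\varepsilon}h^{d(1-1/r)^{2}})^{1/4}\big)$.

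Next I would evaluate the conditional mean. Using the conditional density $g(x,y)$ of $\varepsilon_{i}$ given $X_{i}=x$ (condition (B4)) and a one-term Taylor expansion of $G(x,\cdot)$ about $-r_{ij}$, together with $g(x,-r_{ij})=g(x,0)+O(r_{ij})=g(x,0)+O(h^{p})$ on the support of $K_{ij}$,
\[
\mathbb{E}_{j}\Delta_{ij}(t_{1,u},\widehat{\beta})=-K_{ij,t_{1,u}}A_{ij,t_{1,u}}A_{ij,t_{1,u}}^{\mbox{\tiny T}}(\widehat{\beta}-\beta_{j,t_{1,u}})\,g(X_{i},0)+O\big(K_{ij}|P_{ij}^{\widehat{\beta}}|^{2}+K_{ij}|P_{ij}^{\widehat{\beta}}|h^{p}\big),
\]
so that by the definition $Q_{jn,t_{1,u}}=h^{-d}\mathbb{E}_{j}(K_{ij}A_{ij}A_{ij}^{\mbox{\tiny T}}g(X_{i},0))$, summation over $i$ and stationarity yield $\sum_{i}\mathbb{E}_{j}\Delta_{ij}=-nh^{d}Q_{jn,t_{1,u}}(\widehat{\beta}-\beta_{j,t_{1,u}})+O(nh^{d}|\widehat{\beta}-\beta_{j,t_{1,u}}|^{2}+nh^{d+p}|\widehat{\beta}-\beta_{j,t_{1,u}}|)$. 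Substituting into $S_{j}=O(1)$, multiplying through by $Q_{jn,t_{1,u}}^{-1}/(nh^{d})$, and combining all residuals gives \eqref{Bahadur}, after a direct arithmetic check that $(nh^{d})^{-1}(n^{1+1/r+\varepsilon}h^{d(1-1/r)^{2}})^{1/4}$ collapses to the stated $(n^{1-1/(3r)-2\varepsilon/3}h^{d(1+2/(3r)-1/(3r^{2}))})^{-3/4}$ bound, and that the Taylor-type errors $|\widehat{\beta}-\beta_{j}|^{2}$ and $h^{p}|\widehat{\beta}-\beta_{j}|$ are absorbed into the same rate under (B3).

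The main obstacle, as usual for this type of argument, will be uniformity in $(j,t_{1},t_{u})$ \emph{simultaneously} with the data-dependent $\widehat{\beta}$ running over its shrinking sphere. The paper's preceding lemmas do almost all this work: Lemma \ref{lemmaB3} gives the uniform control of the objective via a chaining on grids of mesh $\ell_{n}=\delta^{2}$, which feeds into the uniform convergence rate in Lemma B.4 and thereby localizes $\widehat{\beta}_{j,t_{1,u}}$; Lemma \ref{lemmaA5} then upgrades the $\Delta_{ij}$ centering to a bound uniform over the sphere and over $t_{1,u}$. One still has to verify that the Taylor remainder $O(|\widehat{\beta}-\beta_{j,t_{1,u}}|^{2})$ is itself uniform in $(j,t_{1},t_{u})$, which is immediate from (B4)--(B5) and boundedness of $K$ and $A$; and that the invertibility of $Q_{jn,t_{1,u}}$ holds uniformly, which follows from (B1), (B5), and standard kernel-matrix arguments. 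Once these uniformities are in hand, the final step is a purely algebraic combination of error rates.
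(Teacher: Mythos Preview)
Your proposal is correct and follows essentially the same route as the paper: start from the subgradient optimality condition so that $R_{n}(\widehat{\beta})=O(1)$, decompose the score via $\Delta_{ij}$, use Lemma~\ref{lemmaA5} to replace $\sum_i\Delta_{ij}$ by its $\mathbb{E}_j$-mean, Taylor-expand that mean to recover the linear term $nh^{d}Q_{jn,t_{1,u}}(\widehat{\beta}-\beta_{j,t_{1,u}})$, and invert. The only cosmetic slip is that your displayed formula for $\mathbb{E}_{j}\Delta_{ij}$ still contains $K_{ij},A_{ij},X_{i}$ before integration over the $i$-th observation; once you integrate (as you in fact do in the next line), you land exactly on the paper's \eqref{a5}.
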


\begin{proof}
We first note that
\begin{align}
\sum_{i=1}^{n}K_{ij,t_{1,u}}A_{ij,t_{1,u}}\left[ (1-
 \alpha)
-\mathbb{I}\left( \varepsilon_{i}\leq -r_{i,j}\right) \right]~~~~~~~~~~~~~~~~~~~~~~~~~~~~~~~~~~~~~~~~~~~~~~~~~~\nonumber \\ 
=n\mathbb{E}_{j}\Delta_{ij}(t_{1,u},\beta)
+\sum_{i=1}^{n}\big( \Delta_{ij}(t_{1,u},\beta)
-\mathbb{E}_{j}\Delta_{ij}(t_{1,u},\beta ) \big )\
 +R_{n}(\beta) ,  \label{a4}
\end{align}%
where
\begin{equation*}
R_{n}(\beta)
=\sum_{i=1}^{n}K_{ij,t_{1,u}}A_{ij,t_{1,u}}\big((1-
 \alpha) -\mathbb{I}(\varepsilon _{i}\leq
-r_{ij,t_{1,u}}-P_{ij,t_{1,u}})\big) .
\end{equation*}%
Then,  using condition (B4) and Taylor's expansion for $g(
X_{i},\cdot )$, it can be inferred that
\begin{align}
n\mathbb{E}_{j}\Delta_{ij}(t_{1,u},\beta )
&=n\mathbb{E}_{j}K_{ij,t_{1,u}}A_{ij,t_{1,u}}\left[ G(
X_{i},-r_{ij,t_{1,u}}-P_{ij,t_{1,u}}) -G(
X_{i},-r_{ij,t_{1,u}}) \right]  \notag \\
&=nh^{d}Q_{jn,t_{1,u}}\left( \left( \beta -\beta
_{j,t_{1,u}}\right) +O(\left\vert \delta \right\vert
^{2}+\left\vert \delta \right\vert h^{p}) \right) .
\label{a5}
\end{align}%
Under condition (B3), we have that $nh^{d}\left\vert \delta \right\vert
^{2}=O\left( \delta _{n}\right) $ and $nh^{d}\left\vert \delta
\right\vert h^{p}=O\left(
\delta_{n}\right) $. Also, there exists a constant $\phi >0$\ such that $%
|R_{n}(\widehat{\beta}_{j,t_{1,u}})|
\leq \phi $ holds almost surely. Thus, \eqref{Bahadur} holds.
\end{proof}

\subsection{Proofs of Theorems}\label{sec:proofs}
\renewcommand{\theequation}{C.\arabic{equation}}
\setcounter{equation}{0}
\setcounter{lemma}{0}
\setcounter{section}{1}
\setcounter {subsection}{2}
\renewcommand{\thetheorem}{C,\arabic{section}}
\renewcommand{\thetheorem}{\thesubsection\arabic{section}}

\subsection*{Proof of Theorem 3.1}
\proof
\textit{ i)} In considering (2.7), it can be seen that
$\widehat{q}_{u}(x_{u}) -q_{u}(x_{u}) =I_{1}-I_{2}-I_{3}$, where $%
I_{1}=\int_{x_{u,0}}^{x_{u}}\int \frac{w_{1}\left( t_{1}\right) \Delta_{u}}{%
D_{1,u}(t_{1},t_{u}) }{\rm d} t_{u}{\rm d} t_{1}$, $I_{2}=%
\int_{x_{u,0}}^{x_{u}}\int \frac{\Delta _{1,u}D_{u}(
t_{1},t_{u}) }{D_{1,u}^{2}(t_{1},t_{u})
}w_{1}(t_{1}){\rm d} t_{u}{\rm d} t_{1}$ and
\begin{equation*}
I_{3}=\int_{x_{u,0}}^{x_{u}}\int \frac{\big ( \Delta
_{u}D_{1,u}(t_{1},t_{u}) -\Delta _{1,u}D_{u}(
t_{1},t_{u}) \big ) \Delta _{1,u}}{\big ( \Delta
_{1,u}+D_{1,u}(t_{1},t_{u}) \big ) D_{1,u}^{2}(
t_{1},t_{u}) }w_{1}(t_{1}){\rm d} t_{u}{\rm d} t_{1}.
\end{equation*}%
From Lemma 3.2, we know that $\sqrt{nh}I_{3}=O\left( \left( nh^{7+%
\frac{2+2\varepsilon }{r}}\right) ^{-\frac{1}{2}}\right) $. 

Next we
 deal with $I_{1}$. Note that $\Delta _{u}$ can be decomposed
into two terms as
\begin{equation*}
\frac{1}{n}\sum_{j=1}^{n}\left( \partial _{u}q(
\widetilde{X}_{j}) \mathbb{I}(X_{j,\bar{u}}\in
\mathcal{X}_{\bar{u}}) -D_{u}(
t_{1},t_{u}) \right)+\frac{1}{n}\sum_{j=1}^{n}\left( \partial_{u}\widehat{%
q}(\widetilde{X}_{j})\! -\partial_{u}q(
\widetilde{X}_{j})\! \right)\mathbb{I}\left( X_{j,\bar{u}}\in
\mathcal{X}_{\bar{u}}\right) .
\end{equation*}%
Then, substituting this expression into $I_{1}$, we obtain two terms, say $%
I_{11}$ and $I_{12}$, respectively. For $I_{11}$, it is included in
$\xi_{n1}$ given in Remark 3.1. This term, however, is not essential for understanding the results of this paper. 
In considering Theorem \ref{theoremA6}, we conclude that $I_{12}$
is equal to
\begin{align*}
\!\!&\!\!&\!\! \frac{1
 }{n^2
h^{d+1}}\sum_{j
 =1}^{n}
 \sum_{1\leq i\neq j\leq n} \int_{x_u}^{x_{u,0}}\!\!\!\int
e_u^{\mbox{\tiny{T}}}
Q_{jn,t_{1,u}}^{-1}%
K_{ij,t_{1,u}} A_{ij,t_{1,u}}((1-
 \alpha)
-\mathbb{I}(\varepsilon _{i}\leq -r_{i,j})
){\rm d} t_{1}{\rm d} t_{u}\cdot\mathbb{I}(X_{j,\bar{u}}\in
\mathcal{X}_{\bar{u}})
 \notag\\
\!\!&\!\!&\!\! \qquad\qquad\qquad+\,O\left(\frac{1}{h} \left( n^{1-\frac{1}{3r%
}-\frac{2\varepsilon }{3}}h^{d\left(
1+\frac{2}{3r}-\frac{1}{3r^{2}}\right) }\right)
^{-\frac{3}{4}}\right).~~~~~~~~~~~~~~~~~~~~~~~~~~~~~~~~~~~~~~~~~~~~~~~~~~~~~~~~~
\end{align*}
Denote by $I_{13}$ the first term of the expression above. Then,  using Lemma 3.1, it holds with probability one that
\begin{equation}
I_{13}=\frac{1}{n^{2}h^{d+1}}\sum_{1\leq i\neq j\leq n}\eta (
Z_{i},Z_{j}) +B_{1,u}h^{p}\big(1+o(1)\big)
  \label{rr1}
\end{equation}
 with
\begin{equation*}
\eta (Z_{i},Z_{j}) = e_{u}^{\mbox{\tiny{T}}}\int_{x_{u,0}}^{x_{u}}\int
Q_{jn,t_{1,u}}^{-1}K_{ij,t_{1,u}}A_{ij,t_{1,u}}\frac{\big (
(1-\alpha )-\mathbb{I}(\varepsilon_{i}\leq 0) \big )
w_{1}(t_{1}) }{D_{1,u}(t_{1},t_{u})
}{\rm d} t_{1}{\rm d} t_{u}\cdot\mathbb{I}(X_{j,\bar{u}}\in
\mathcal{X}_{\bar{u}}).
\end{equation*}%

Let $\psi(Z_{i},Z_{j}) =\eta (Z_{i},Z_{j})
+\eta (Z_{j},Z_{i}) $, $\psi_{i}=\mathbb{E}\psi
(z,Z_{j}) |_{z=Z_{i}}$, $\varphi
_{ij}=\psi(Z_i,Z_j) -\psi_{i}-\psi_{j}$,
$U_{n}=\frac{1}{n^{2}h^{d+1}}\sum_{1\leq i<j\leq n}\varphi
_{ij}$ and $I_{4}$ be the first term on the RHS of relationship \eqref{rr1}. Taking into account the Hoeffding decomposition of an U-statistic (see, e.g., \citet{lee90}), we rewrite $I_4$ as $
I_{4}=U_{n}+\frac{n-1}{n^{2}h^{d+1}}\sum_{i=1}^{n}\psi_{i}$. Then,
it can be inferred from Lemma A2 that
\begin{equation*}
\mathbb{E}U_{n}^{2}\leq \frac{Cn^{2}M^{2}}{\left( n^{2}h^{d+1}\right) ^{2}}%
\left( 1+\sum_{j=1}^{n-1}j^2
 \beta _{j}^{\left( r_{3}-2\right)
/r_{3}}\right) ,
\end{equation*}%
where $M=\sup_{i,j}\left(\mathbb{E}\left\vert \varphi
_{ij}\right\vert
^{r_{3}}\right) ^{\frac{1}{r_{3}}}$ for some $r_{3}>2$ satisfying that $1-%
\frac{2}{r_{3}}=\frac{2+\varepsilon }{r}$. It can be inferred from condition (B7) that $%
\sup_{i,j}\mathbb{E}\left\vert \eta (Z_{i},Z_{j})
\right\vert ^{r_{3}}\leq Ch^{d}$, and thus
\begin{equation}
\mathbb{E}U_{n}^{2}\leq \frac{C}{n^{2}h^{d\left( 1+\frac{2+\varepsilon }{r}%
\right) +2}},  \label{U2}
\end{equation}%
i.e., $\sqrt{nh}U_{n}=O_{\mathbb{P}}\left( \left( nh^{d+1+\frac{%
d(2+\varepsilon )}{r}}\right) ^{-\frac{1}{2}}\right) $. Also, from
the property of the conditional expectation, it follows that
\begin{equation}
\psi_{i}=\big ((1-\alpha )-\mathbb{I}(\varepsilon_{i}\leq
0) \big) \zeta _{i},  \label{b6}
\end{equation}%
where
\begin{equation*}
\zeta _{i}=\zeta (X_{i})
=\mathbb{E}_{i}\int_{x_{u,0}}^{x_{u}}\int \frac{w_{1}(t_{1})e_{u}^{\mbox{\tiny{T}}}
Q_{jn,t_{1,u}}^{-1}K_{ij,t_{1,u}}A_{ij,t_{1,u}}}{%
D_{1,u}(t_{1},t_{u}) }{\rm d} t_{1}{\rm d} t_{u}\cdot\mathbb{I}(
X_{j,\bar{u}}\in \mathcal{X}_{\bar{u}}) .
\end{equation*}%

Next, we  consider the asymptotic expression of
$\sum_{i=1}^{n}\psi_{i} $. Note that the domain of the covariates
of $\zeta (X_{i})$ is $A_{(u)}$, which is defined at
the beginning of Section 3. We then divide $A_{(u)}$ into a
sequence of subsets $\{A_{l}\}$ and try to get the asymptotic
representations $\{M_{l}\}$ of $\zeta(X_{i})$ on
these subsets, respectively. Let $M(X_{i})$ be the summation of all these $%
\{M_{l}\}$. Without loss of generality, we only consider
some special cases of $\{A_{l}\}$, all other left cases of
$\{A_{l}\}$ can be settled similarly. By the inequality
\eqref{Bernstein3}, we see that
\begin{equation}
\mbox{Var}\left( \frac{\left( n-1\right)
}{n^{2}h^{d+1}}\sum_{i=1}^{n}\big ( 1-\alpha -\mathbb{I}(
\varepsilon _{i}\leq 0) \big ) \big ( \zeta _{i}-M(X_{i}) \big ) \right) \leq \frac{C}{nh^{2d+2}}\left(
\mathbb{E}\left\vert \zeta (X_{i}) -M(X_{i})
\right\vert^{r_{2}}\right)^{\frac{2}{r_{2}}} \label{variance}
\end{equation}%
for $1-\frac{2}{r_{2}}=\frac{1+\varepsilon }{r}$. Let
\begin{align*}
A_{1} &=\left[ x_{u,0}-h,x_{u,0}+h\right] \times \Pi_{1\leq l\neq u\leq d}%
\left[ a_{l}-h,b_{l}+h\right], & I_{u1}=\left(
\int_{A_{1}}\left\vert \zeta
\left(x\right) -M_{1}(x) \right\vert^{r_{2}}p(x){\rm d} x\right)^{\frac{2}{r_{2}}}, \\
%\end{equation*}%
%\begin{equation*}
A_{2}& =\left[ a_{1}-h,a_{1}+h\right] \times \Pi_{2\leq l\leq
d}\left[ a_{l}-h,b_{l}+h\right], & I_{u2}=\left(
\int_{A_{2}}\left\vert \zeta \left(
x\right) -M_{2}(x) \right\vert ^{r_{2}}p(x){\rm d} x\right) ^{\frac{2}{%
r_{2}}}, \\
%\end{equation*}%
%\begin{equation*}
A_{c}& =\left[ x_{u,0}+h,x_{u}-h\right] \times \Pi_{1\leq l\neq u\leq
d}\left[ a_{l}+h,b_{l}-h\right],& I_{c}=\left(
\int_{A_{c}}\left\vert \zeta (x) -M_{c}(x) \right\vert ^{r_{2}}p(x){\rm d} x\right)^{\frac{2}{%
r_{2}}}.
\end{align*}

We now deal with each term mentioned above separately. First, we consider $%
I_{u1}$. By variable substitution, it can be seen from condition (B5) that
\begin{align*}
Q_{jn,t_{1,u}}=\int K(x)A(x)A^{\mbox{\tiny{T}}}(x) g_{1}(\widetilde{X}%
_{j}+hx){\rm d}x=g_{1}(\widetilde{X}_{j})Q+hQ^{\ast }\left. \frac{\partial g_{1}(t)}{%
\partial t}\right\vert_{t=\widetilde{X}_{j}}+O(h^{2}) .
\end{align*}%
Then, from Newman's expansion \citep[see, e.g.,][]{stewart90} it can be established that
\begin{align}
Q_{jn,t_{1,u}}^{-1}=\frac{Q^{-1}}{g_{1}(\widetilde{X}_{j})}-\frac{hQ^{-1}Q^{\ast
}}{g_{1}^{2}(\widetilde{X}_{j})}\left. \cdot\frac{\partial g_{1}(t)}{\partial t}%
\right\vert_{t=\widetilde{X}_{j}}Q^{-1}+O( h^{2}) .
\label{QE}
\end{align}%
Let $B_{k}=\Pi_{1\leq l\neq k\leq d}\left[ a_{l}+h,b_{l}-h\right]
\subset \mathbb{R}^{d-1}$,
\begin{equation*}
B_{k,i}=\left[ a_{i}-h,a_{i}+h\right] \times \Pi _{l=1,l\neq k\neq i}^{d}%
\left[ a_{l}-h,b_{l}+h\right] \subset \mathbb{R}^{d-1},
\end{equation*}%
\begin{equation}
M_{1}(z) =\int_{x_{u,0}}^{x_{u}}\int \frac{e_{u}^{\mbox{\tiny{T}}}Q^{-1}w_{1}%
(t_{1}) A\left( \frac{z-t}{h}\right) K\left( \frac{z-t}{h}%
\right) p_{t_{\bar{u}}}(t_{\bar{u}})
}{g_{1}(t)D_{1,u}(
t_{1},t_{u}) }\,\mathbb{I}(t_{\bar{u}}\in \mathcal{X}_{\bar{u}%
}){\rm d}t\cdot \mathbb{I}(z\in A_{1})  \label{M1}
\end{equation}%
and
\begin{equation}
M_{u}(z) =\frac{w_{1}(z_{1}) p_{\bar{u}}(z_{%
\bar{u}})e_{u}^{\mbox{\tiny{T}}}Q^{-1}}{g_{1}(z)D_{1,u}( z_{1},z_{u}) }%
\int_{-1}^{\frac{z_{u}-x_{u,0}}{h}}\int A(t)K(
t){\rm  d} t\cdot\mathbb{I}(z\in A_{1}).  \label{M2}
\end{equation}%
Then, by using \eqref{QE} and variable substitution, we obtain 
\begin{align*}
I_{u1} &\leq \frac{C}{nh^{2d+2}}\left\{
\int_{x_{u,0}-h}^{x_{u,0}+h}\int
\left\vert h^{d}\int_{-1}^{\frac{z_{u}-x_{u}}{h}}\int \left( -\frac{%
he_{u}^{\mbox{\tiny{T}}}Q^{-1}Q^{\ast }}{g_{1}^{2}(x-ht)}\frac{\partial g_{1}(x-ht)}{%
\partial t}Q^{-1}+O(h^{2}) \right) \right. \right. \\
&\left. \left. \frac{w_{1}( x_{1}-ht_{1}) A(t)
K(t) }{D_{1,u}(x_{1}-ht_{1},x_{u}-ht_{u}) }p_{\bar{u}%
}(x_{\bar{u}}-ht_{\bar{u}}) {\rm d}t\right\vert
^{r_{2}}p(x) {\rm d} x\right\} ^{\frac{2}{r_{2}}}\leq
\frac{C}{nh}h^{\frac{2}{r_{2}}+1}.
\end{align*}%
In view of \eqref{M1}, \eqref{M2} and variable substitution, it can
be inferred that
\begin{equation*}
\frac{1}{nh^{2+2d}}\left[ \int_{x_{u,0}-h}^{x_{u,0}+h}\!\!\!\int_{B_{u}}%
\left\vert M_{1}(z) -h^{d}M_{u}(z)
\right\vert
^{r_{2}}p(z) {\rm d} z\right] ^{\frac{2}{r_{2}}}\leq \frac{C}{nh}h^{%
\frac{2}{r_{2}}+1}
\end{equation*}%
and
\begin{equation*}
\frac{1}{nh^{2+2d}}\left[ \sum_{k=1,k\neq
u}^{d}\int_{x_{u,0}-h}^{x_{u,0}+h}\!\!\!\int_{B_{u,i}}\left\vert
M_{1}(z) -h^{d}M_{u,k}(x_{u,0},a_{k},z)\right\vert
^{r_{2}}p(z) {\rm d} z\right] ^{\frac{2}{r_{2}}}\leq
\frac{C}{nh}h^{\frac{4}{r_{2}}+1}.
\end{equation*}%
Hence, from the three inequalities above and the Cram\'er-Rao inequality, it
can be seen that $\frac{M_{u}(z) }{\sqrt{nh}}$ is
included on the RHS
of (3.3) with the remainder term $O_{\mathbb{P}}(h^{\frac{1}{r_{2}}+%
\frac{1}{2}}+h^{\frac{d}{r_{2}}-\frac{1}{2}}) $. 

Next, we consider $%
I_{u2}$. Let%
\begin{equation*}
I_{u2,1}=\left[ \int_{a_{1}-h}^{a_{1}+h}\int_{B_{1}}\left\vert
\int_{x_{u,0}}^{x_{u}}\int \frac{w_{1}(t_{1})
e_{u}^{\mbox{\tiny{T}}}Q_{jn}^{-1}A\big (\frac{z-t}{h}\big ) K\big (\frac{z-t}{h}%
\big ) p(t_{\bar{u}}) }{D_{1,u}(t_{1},t_{u}) }%
{\rm d} t\right\vert ^{r_{2}}p(z){\rm d} z\right] ^{\frac{2}{r_{2}}},
\end{equation*}%
\begin{equation*}
I_{u2,2}=\left[ \int_{a_{1}-h}^{a_{1}+h}\int_{B_{1}^{c}}\left\vert
\int_{x_{u,0}}^{x_{u}}\int \frac{w_{1}\left( t_{1}\right)
e_{u}^{\mbox{\tiny{T}}}Q_{jn}^{-1}A\big (\frac{z-t}{h}\big ) K\big (\frac{z-t}{h}%
\big ) p(t_{\bar{u}}) }{D_{1,u}(t_{1},t_{u}) }%
{\rm d} t\right\vert ^{r_{2}}p(z) {\rm d} z\right] ^{\frac{2}{r_{2}}}.
\end{equation*}%
Obviously, $I_{u2}\leq I_{u2,1}+I_{u2,2}$. According to \eqref{QE}, variable substitution and the known condition $%
w(t)=O\left( t\right) $ as $t\rightarrow a$, we obtain
\begin{align*}
\frac{I_{u2,1}}{nh^{2d+2}} &\leq \frac{C_{p}}{nh^{2}}\left\{
\int_{a-h}^{a+h}\int_{B_{1}}\left\vert
\int_{-1}^{\frac{z_{1}-a}{h}}\int w_{1}\left( z_{1}-ht_{1}\right)
\left( \frac{Q^{-1}}{g_{1}(z-ht)}+O\left(
h\right) \right) \right. \right. \\
&\left. \left. \cdot \frac{A(t) K(t) p\left(z_{%
\bar{u}}-ht_{\bar{u}}\right) }{D_{1,u}\left(
z_{1}-ht_{1},z_{u}-ht_{u}\right) }{\rm d}t\right\vert ^{r_{2}}f(
z) {\rm d}z\right\} ^{\frac{2}{r_{2}}}\leq
\frac{C_{r_{2}}}{nh}h^{1+\frac{2}{r_{2}}}.
\end{align*}%
Similarly, it can be shown that $\frac{I_{u2,2}}{nh^{2d+2}}=O\left( \frac{1}{%
nh}h^{1+\frac{2}{r_{2}}}\right) $. Therefore, $I_{u2}=O\left( \frac{1}{nh}%
h^{1+\frac{2}{r_{2}}}\right) $.

Finally, we consider $I_{c}$. Let
\begin{equation*}
M_{c}(Z_{i})=h^{d+1}\cdot e_{u}^{\mbox{\tiny{T}}}Q^{-1} \int \left. \frac{\partial }{%
\partial x^{\mbox{\tiny{T}}}}\left( \frac{w_{1}(x_{1}) p_{\bar{u}}( x_{\bar{%
u}}) }{g_{1}(x)D_{1,u}(x_{1},x_{u}) }\right)
\right\vert _{x=\widetilde{X}_{i}}t A(t)K(t){\rm d} t\cdot \mathbb{I}(
X_{i}\in A_{(u)}) .
\end{equation*}
From the fact that $e_{u}^{\mbox{\tiny{T}}}Q^{-1}\int A(t) K(t){\rm d} t=0$ and \eqref{QE}, by exploiting variable substitution
and  Taylor's expansion, we have
\begin{equation*}
\frac{I_{c}}{nh^{2d+2}}\leq \frac{Ch^{2}}{n}.
\end{equation*}%
Thus, $nh\frac{I_{c}}{n\left( h^{d+1}\right)^{2}}=O(
h^{3})$ so that $h^{\frac{3}{2}}$ appears in the remainder
term. 

Finally, we note that the contribution of term $I_{2}$ is similar to that of $I_{1}$ with $e_{u}^{\mbox{\tiny{T}}}$ replaced by $-D_{u}/D_{1}e_{1}^{\mbox{\tiny{T}}}$, which follows from comparing the expressions of $\Delta_{u}$ and $\Delta_{1,u}$. This completes the proof of   \textit{i)}.
\qed
\medskip

\textit{ii)} It can be seen from (2.8) and (2.9) that
\begin{equation}
\widehat{q}_{1}(x_{1}) -q_{1}(x_{1}) =c(
x_{1}) (\widehat{c}-c) +\left( \widehat{c}-c\right)
\int_{x_{1,0}}^{x_{1}}L_{n}(t_{1})
{\rm d}t_{1}+c\int_{x_{1,0}}^{x_{1}}L_{n}(t_{1}) {\rm d} t_{1},
\label{q1exp}
\end{equation}%
where
\begin{equation*}
L_{n}( t_{1}) =\int \left( \frac{\widehat{D}_{1,2}(
t_{1},t_{2}) }{\widehat{D}_{2}(t_{1},t_{2}) }-\frac{%
D_{1,2}(t_{1},t_{2}) }{D_{2}(t_{1},t_{2})
}\right) w_{2}(t_{2}){\rm  d} t_{2}.
\end{equation*}%
For $\widehat{c}-c$, it can be rewritten as $I_{11}+I_{12}$, where
\begin{equation*}
I_{11}=-\int \frac{L_{n}( t_{1}) w_{1}(t_{1}) }{%
\left( \int \frac{D_{1,2}(t_{1},t_{2}) }{D_{2}(
t_{1},t_{2}) }w_{2}(t_{2}){\rm d}t_{2}\right)
^{2}}{\rm d} t_{1}
\end{equation*}%
and
\begin{equation*}
I_{12}=\int \frac{w_{1}(t_{1}) L_{n}^{2}(t_{1}) }{%
\int \frac{\widehat{D}_{1,2}\left( t_{1},t_{2}\right) }{\widehat{D}_{2}\left(
t_{1},t_{2}\right) }w_{2}(t_{2}) {\rm d}t_{2}\left( \int \frac{%
D_{1,2}\left( t_{1},t_{2}\right) }{D_{2}\left( t_{1},t_{2}\right) }%
w_{2}(t_{2}) {\rm d}t_{2}\right)^{2}}{\rm d} t_{1}.
\end{equation*}%
From Lemma 3.2 it follows that
\begin{equation*}
\sqrt{nh}\left\vert I_{12}\right\vert =\sqrt{nh}O\left(
\sup_{t_{1}}L_{n}^{2}\left( t_{1}\right) \right)
=O\left( \left( nh^{7+\frac{%
2+2\varepsilon }{r}}\right) ^{-\frac{1}{2}}\right) .
\end{equation*}%
As for $I_{11}$, it is equal to
$I_{11}=-I_{111}-I_{112}+I_{113}+I_{114}$, where 
\begin{align*}
I_{111} &=\iint  \frac{w_{1}(t_{1}) w_{2}(t_{2})
\Delta_{1,2}(t_{1},t_{2}) }{D_{2}(t_{1},t_{2}) }%
{\rm d} t_{2}{\rm d} t_{1},\\
\quad I_{112}& =\iint \frac{\Delta_{1,2}(t_{1},t_{2})
\Delta _{2}(t_{1},t_{2}) }{\widehat{D}_{2}(
t_{1},t_{2}) D_{2}(t_{1},t_{2}) }w_{1}(
t_{1}) w_{2}(t_{2}) {\rm d} t_{2}{\rm d} t_{1},\\
I_{113}& =\iint \frac{\Delta _{2}(t_{1},t_{2})
D_{1,2}(t_{1},t_{2})}{D_{2}^{2}(t_{1},t_{2}) }w_{1}(t_{1}) w_{2}(
t_{2}) {\rm d} t_{2}{\rm d} t_{1},
\intertext {and}
I_{114} &=\iint \frac{\Delta _{2}^{2}(t_{1},t_{2})
D_{1,2}(t_{1},t_{2}) }{\widehat{D}_{2}(t_{1},t_{2})
D_{2}^{2}(t_{1},t_{2}) }w_{1}(t_{1}) w_{2}(t_{2}) {\rm d} t_{2}{\rm d} t_{1}.
\end{align*}
Using Lemma 3.2, we have
\begin{equation*}
I_{112}=O\left( \left( nh^{4+\frac{1+\varepsilon }{r}}\right)
^{-1}\right) ,\quad I_{114}=O\left( \left( nh^{4+\frac{1+\varepsilon
}{r}}\right) ^{-1}\right) .
\end{equation*}

We now consider $I_{111}$. Similar as before, note that $\Delta_{1,2}(t_{1},t_{2})$ can be rewritten as
\begin{eqnarray*}
&&\frac{1}{n}\sum_{j=1}^{n}\left( \partial_{1}\widehat{q}(
t_{1},t_{2},X_{j,\bar{2}}) -\partial _{1}q(t_{1},t_{2},X_{j,\bar{%
2}}) \right) \mathbb{I}(X_{j,\bar{2}}\in \mathcal{X}_{\bar{2}%
})\\
&&+\frac{1}{n}\sum_{j=1}^{n}\left( \partial _{1}q(t_{1},t_{2},X_{j,%
\bar{2}})\,\mathbb{I}(X_{j,\bar{2}}\in
\mathcal{X}_{\bar{2}}) -D_{1,2}(t_{1},t_{2})
\right) .
\end{eqnarray*}%
Substituting this into $I_{111}$, we derive two terms from this, say $%
I_{115} $ and $I_{116}$. Then, $I_{116}$ is included in
(3.5). This term is not essential for understanding the proof in this case, and hence has been omitted. As for $I_{115} $, by using Theorem \ref{theoremA6} and Lemma 3.1, it can be shown that
\begin{eqnarray*}
I_{115}\! \!&\!\!=\!\!&\!\!\frac{1}{n^{2}h^{d+1}}\sum_{j=1}^{n}\iint
e_{1}^{\mbox{\tiny{T}}}Q_{jn}^{-1}\sum_{i=1}^{n}K_{ij,t_{1,2}}A_{ij,t_{1,2}}\big (\left(
1-\alpha \right) -\mathbb{I}(\varepsilon _{i}\leq 0) \big ) \frac{%
w_{2}(t_{2}) w_{1}(t_{1}) }{D_{2}(t_{1},t_{2})}%
{\rm d} t_{1}{\rm d} t_{2} \\
&&+\,O\left( \frac{1}{h}\left( n^{1-\frac{1}{3r}-\frac{2\varepsilon }{3}%
}h^{d\left( 1+\frac{2}{3r}-\frac{1}{3r^{2}}\right) }\right) ^{-\frac{3}{4}%
}+h^{p-1}\right) .
\end{eqnarray*}%
According to the previous analysis and by the Hoeffding decomposition the leading term of $I_{115}$ is equal to
\begin{equation*}
\frac{1}{nh^{d+1}}\sum_{i=1}^{n}\big ( \left( 1-\alpha \right)
-\mathbb{I}(
\varepsilon _{i}\leq 0) \big )\mathbb{E}_{i}\iint\frac{%
w_{1}(t_{1}) w_{2}(t_{2})
e_{1}^{\mbox{\tiny{T}}}Q_{jn}^{-1}K_{ij,t_{1,2}}A_{ij,t_{1,2}}\mathbb{I}( X_{j,\bar{2}%
}\in \mathcal{X}_{\bar{2}}) }{D_{2}(t_{1},t_{2})}{\rm d} t_{1}{\rm d} t_{2}.
\end{equation*}%
And then, by a similar method as the proof of part \textit{i)}, it can be shown
that the
leading term of $I_{115}$ is equal to%
\begin{eqnarray*}
\sum_{i=1}^{n}\frac{\left( \left( 1-\alpha \right)
-\mathbb{I}(\varepsilon _{i}\leq 0) \right) w_{1}(
X_{i,1}) w_{2}(X_{i,2}) p(X_{i,\bar{2}}) e_{1}^{\mbox{\tiny{T}}}Q^{-1}}{%
nhD_{2}(X_{i,1},X_{i,2})g_{1}(X_{i})}\! 
\sum_{3\leq k\leq d}\Big(f_{k}\Big (\frac{X_{i,k}-b_{k}}{h}%
\Big ) -f_{k}\Big (\frac{X_{i,l}-a_{l}}{h}\Big ) \Big ) .
\end{eqnarray*}%

Analogously, we can deal with $I_{113}$. Its leading term is given
by
\begin{eqnarray*}
&&\!\!\sum_{i=1}^{n}\frac{\big ( \left( 1-\alpha \right)
-\mathbb{I}(\varepsilon _{i}\leq 0) \big ) w_{1}(X_{i,1}) w_{2}(X_{i,2}) p(
X_{i,\bar{2}}) D_{1,2}(X_{i,1},X_{i,2})
e_{2}^{\mbox{\tiny{T}}}Q^{-1}}{nhg_{1}(X_{i})D_{2}^{2}(X_{i,1},X_{i,2})} \\
&&\cdot \sum_{3\leq k\leq d}\Big (f_{k}\Big (\frac{X_{i,k}-b_{k}}{h}%
\Big ) -f_{k}\Big (\frac{X_{i,l}-a_{l}}{h}\Big ) \Big ) .
\end{eqnarray*}%
Let $I_{2}=c\int_{x_{1,0}}^{x_{1}}L_{n}(t_{1}) {\rm d} t_{1}$,%
\begin{align*}
I_{21} &=\int_{x_{1,0}}^{x_{1}}\! \int\! \frac{w_{2}(
t_{2}) \Delta
_{1,2}(t_{1},t_{2}) }{D_{2}(t_{1},t_{2}) }%
{\rm d} t_{2}{\rm d} t_{1}\mbox{, }\quad I_{22}=\int_{x_{1,0}}^{x_{1}}\int
\frac{w_{2}(t_{2}) \Delta _{2}(t_{1},t_{2})
D_{1,2}(t_{1},t_{2}) }{D_{2}^{2}(
t_{1},t_{2}) }{\rm d} t_{2}{\rm d} t_{1}\\
%\end{equation*}%
\intertext{and}
%\begin{equation*}
I_{23} &\!=\!\int_{x_{1,0}}^{x_{1}}\!\!\int \!\!\left( \Delta
_{12}(t_{1},t_{2}) \!-\!\Delta_{2}(
t_{1},t_{2}) \frac{D_{1,2}(t_{1},t_{2})
}{D_{2}(t_{1},t_{2}) }\right) \!\frac{\Delta _{2}(
t_{1},t_{2}) w_{2}(t_{2}) }{\left( \Delta_{2}(t_{1},t_{2}) +D_{2}(t_{1},t_{2})
\right) D_{2}(t_{1},t_{2}) }{\rm d} t_{2}{\rm d} t_{1}.
\end{align*}%
Then, $I_{2}=\left( I_{21}-I_{22}-I_{23}\right) c$. Similarly, the
leading term of $I_{21}$ is equal to
\begin{equation*}
\frac{1}{nh^{d+1}}\sum_{i=1}^{n}\left( \big ( 1-\alpha \right)
\mathbb{I}(\varepsilon _{i}\leq 0) \big )\mathbb{E}%
_{i}\!\!\!\int_{x_{1,0}}^{x_{1}}\!\!\int \!\frac{w_{2}(
t_{2})
e_{1}^{\mbox{\tiny{T}}}Q^{-1}K_{ij,t_{1,2}}A_{ij,t_{1,2}}\mathbb{I}(
X_{j,\bar{2}}\in \!\mathcal{X}_{\bar{2}}) }{D_{2}(
t_{1},t_{2}) }{\rm d} t_{2}{\rm d}t_{1}.
\end{equation*}%
By an analogous method, we can obtain the asymptotic representations of $%
I_{21}$ and $I_{22}$, which are included in (3.4).   This
completes the proof of  \textit{ii)}.
\endproof%

\noindent
\textbf{Remark 3.}
The argument of the proof of Theorem 3.1 uses $x_{u}\geq u_{u,o}$ at some steps, for instance to derive the integration boundaries in \eqref{M2}. A similar argument is possible for $x_{u}<x_{u,0}$.  

\subsection*{Proof of Theorem 4.1}  % theorem 4.4.1 on p. 74
\begin{proof}
We first consider the case $2\leq u\leq d$. Let $I_{1}$, $I_{2}$ and
$I_{3}$
be the same notations as those in part \textit{i)} of Theorem 3.1 and then $\widehat{q}%
_{u}(x_{u}) -q_{u}(x_{u}) $ is equal to $%
I_{1}-I_{2}-I_{3}$. From Lemma 3.2, we know that $\sqrt{nh}%
\sup_{x_{u}\in \left[ a_{u},b_{u}\right] }\left\vert
I_{3}\right\vert
=O\left( \left( nh^{7+\frac{2+2\varepsilon }{r}}\right) ^{-\frac{1}{2}%
}\right) $. For brevity, let $\gamma _{n}=\sqrt{\frac{\log n}{nh^{1+\frac{%
1+\varepsilon }{r}}}}$. Next, we only consider the uniform
convergence rate of $I_{1}$, since the methodology to deal with
$I_{1}$ and $I_{2}$ is almost
completely the same. In view of Theorem \ref{theoremA6} and Lemma 3.1, we know that $%
I_{1}=\phi \left( x_{u}\right) +o\left( \gamma _{n}\right)$, where
$\phi
(x_{u})=\int_{x_{u,0}}^{x_{u}}\psi _{n}(t_{u}){\rm d}t_{u}$, $\psi_{n}(t_{u})=%
\frac{1}{n^{2}h^{d+1}}\sum_{j=1}^{n}\sum_{i=1,i\neq j}^{n}\varphi
_{ij}(t_{u})$ and
\begin{equation*}
\varphi_{ij}(t_{u})=\big ( \left( 1-\alpha \right) -\mathbb{I}(
\varepsilon _{i}\leq 0) \big ) \int
\frac{e_{u}^{\mbox{\tiny{T}}}Q_{j,n}^{-1}\left(
t_{1},t_{u}\right) w_{1}(t_{1}) K_{ij,t_{1,u}}A_{ij,t_{1,u}}}{%
D_{1,u}\left( t_{1},t_{u}\right) }{\rm d} t_{1}.
\end{equation*}%
In the sequel, we only need to prove that
\begin{equation}
\sup_{x_{u}\in \left[ a_{u},b_{u}\right] }\phi \left( x_{u}\right)
=O(\gamma _{n}) .  \label{b7}
\end{equation}%
As usual, we divide the interval $\left[ a_{u},b_{u}\right] $ into a
sequence of disjoint subintervals, the length of which is equal to
$\ell _{n} $. Without loss of generality, we assume that $(b_u-a_u)\ell
_{n}^{-1}$ is an integer, and $\left\{ x_{u,k},k=1,\ldots,C\ell
_{n}^{-1}\right\} $ are the corresponding grid points. Then,
\begin{eqnarray}
&&\mathbb{P}\Big ( \sup_{x_{u}\in \left[ a_{u},b_{u}\right] }\left\vert
\phi \left( x_{u}\Big ) \right\vert \geq \gamma _{n}\right) \leq
\sum_{1\leq
k\leq \ell _{n}^{-1}}\mathbb{P}\Big ( \sup_{t_{u}\in \left[ x_{u,k},x_{u,k+1}%
\right] }\left\vert \psi_{n}(t_{u}) -\psi_{n}(
x_{u,k}) \right\vert \geq \frac{\gamma _{n}}{3\ell
_{n}}\Big)  \notag
\\
&&\qquad \qquad \qquad +\sum_{1\leq k\leq \ell
_{n}^{-1}}\mathbb{P}\Big (
\left\vert \psi _{n}(x_{u,k}) \right\vert \geq \frac{\gamma _{n}%
}{3\ell _{n}}\Big ) +\sum_{1\leq k\leq \ell _{n}^{-1}}\mathbb{P}\left(
\left\vert \phi (x_{u,k}) \right\vert \geq \frac{\gamma _{n}}{3}%
\right) .  \label{b4}
\end{eqnarray}%

We now consider the super bound of the first term on the RHS of 
\eqref{b4}. From condition (B2), there exists another kernel function $%
K_{-u}\left(\cdot\right)$ defined on $\mathbb{R}^{d-1}$ such that
\begin{equation}
\left|\frac{\partial K(t)}{\partial t_{u}}\right|\leq
CK_{-u}\left(t_{-u} \right),  \label{444}
\end{equation}
where $t_{-u}\in\mathbb{R}^{d-1}$ is obtained from $t$ by deleting
its $u$th
component. Similar as the proof of inequality \eqref{Qin} in Appendix \ref{app:D}, we know that $%
\left\Vert Q_{j,n}^{-1}(t_{1},t_{u}) -Q_{j,n}^{-1}(
t_{1},x_{u,k}) \right\Vert \leq C\ell _{n}$. Also, from condition (B8), it
can be seen that $\left\vert D_{1,u}^{-1}(t_{1},t_{u})
-D_{1,u}^{-1}(t_{1},x_{u,k}) \right\vert \leq C\ell
_{n}$. Therefore, if $\left\vert X_{i,u}-t_{u}\right\vert \leq h$
and $\left\vert
X_{i,u}-x_{u,k}\right\vert \leq h$ occur simultaneously, then from \eqref{444}, one of the leading terms for the bound of $\left\vert \varphi
_{ij}(t_{u})-\varphi_{ij}(x_{u,k})\right\vert $ is $\frac{C\ell_{n}}{h}%
W_{ij}$, where
\begin{equation*}
W_{ij}=\mathbb{I}\left( \left\vert X_{i,u}-x_{u,k}\right\vert \leq
h\right) \int
K_{-u}\left( \frac{X_{i,1}-t_{1}}{h},\frac{X_{i,\bar{u}}-X_{j,\bar{u}}}{h}%
\right) w_{1}(t_{1}) {\rm d} t_{1}.
\end{equation*}%
We have not specified the other terms since they can be dealt with
analogously. If $\left\vert X_{i,u}-t_{u}\right\vert \leq h$ and
$\left\vert X_{i,u}-x_{u,k}\right\vert >h$ occur simultaneously, or
$\left\vert X_{i,u}-t_{u}\right\vert >h$ and $\left\vert
X_{i,u}-x_{u,k}\right\vert \leq h$ occur simultaneously, one of the
leading terms of the bound of \[
\left\vert \varphi_{ij}(t_{u})-\varphi _{ij}(x_{u,k})\right\vert \] is $%
CU_{ij}$, where%
\begin{align*}
U_{ij}&=\mathbb{I}(h-\ell _{n}\leq \left\vert
X_{i,u}-x_{u,k}\vert
\leq h+\ell _{n}\right) \\
&\quad\cdot\int \frac{\left\Vert Q_{j,n}^{-1}\left( t_{1},x_{u,k}\right)
\right\Vert }{D_{1,u}\left( t_{1},x_{u,k}\right) }K_{-u}\left( \frac{%
X_{i,1}-t_{1}}{h},\frac{X_{i,\bar{u}}-X_{j,\bar{u}}}{h}\right)
w_{1}(t_{1}) {\rm d} t_{1}.
\end{align*}%
Therefore, according to the two cases above, the following two
summations, say $I_{31}$ and $I_{32}$, respectively,
\begin{equation*}
\frac{C}{n^{2}h^{d+1}}\sum_{j=1}^{n}\sum_{i=1,i\neq j}^{n}\left(
\frac{\ell _{n}}{h}W_{ij}+U_{ij}\right) ,
\end{equation*}%
are one of the two leading terms of the bound of $\sup_{t_{u}\in
\left[ x_{u,k},x_{u,k+1}\right] }\left\vert \psi_{n}(
t_{u}) -\psi_{n}(x_{u,k}) \right\vert $.

Similar as the proof of Theorem 3.1, $I_{31}$ can be rewritten
as
\begin{eqnarray*}
&&L_{1}+\frac{\left( n-1\right) \ell
_{n}}{n^{2}h^{d+2}}\sum_{j=1}^{n}\big (\mathbb{E}_{j}(
W_{ij}+W_{ji}) -\mathbb{E}\left(\mathbb{E}_{j}(
W_{ij}+W_{ji}) \right) \big )  -\frac{\left( n-1\right) \ell _{n}}{%
2n h^{d+2}}\mathbb{E}\big (\mathbb{E}_{j}(W_{ij}+W_{ji})
\big ) ,
\end{eqnarray*}%
where%
\begin{align*}
L_{1}=\frac{ \ell _{n}}{n^{2}h^{d+2}}%
\sum_{1\leq i<j\leq n}\left\{\left(
W_{ij}+W_{ji}\right)-\mathbb{E}_{j}(W_{ij}+W_{ji})
-\mathbb{E}_{i}(W_{ij}+W_{ji})+\mathbb{E}\big(\mathbb{E}_{i}(
W_{ij}+W_{ji}) \big )\right\}
\end{align*}%
is a degenerated U-statistic transformed from $I_{31}$. Next, we choose $%
\ell _{n}=h\gamma _{n}^{\frac{1}{2}}$\ and\ $q_{1}=\frac{nh}{\log
n}$. Noting that $\mathbb{E}W_{ij}^{sk}\leq Ch^{sk+d-1}$, by Lemma \ref{lemmaA2},
we have that
\begin{equation*}
\frac{1}{\ell _{n}}\cdot\frac{\ell _{n}^{2}}{\gamma_{n}^{2}}\cdot\mathbb{E}L_{1}^{2}\leq
\frac{C}{\ell_{n}}\cdot\frac{\ell_{n}^{2}}{\gamma_{n}^{2}}\cdot n^{2}\left( \frac{%
\ell _{n}}{n^{2}h^{d+2}}\right) ^{2}\cdot \left( h^{2s+d-1}\right)^{\frac{1}{s}}=%
\frac{C\ell_{n}^{3}}{\gamma_{n}^{2}n^{2}h^{d+3}}\cdot h^{-\left(
d-1\right) \frac{3+\varepsilon}{r} }\leq \frac{1}{n(\log n)^2},
\end{equation*}%
where $\frac{1}{s}=1-\frac{k+1+\varepsilon }{r}$. Since $\left\vert
\mathbb{E}_{j}W_{ji}\right\vert \leq Ch^{d-1}$ and $\mathbb{E}\left\vert
\mathbb{E}_{j}W_{ji}\right\vert^{r_{1}}\leq Ch^{r_{1}\left( d-1\right)
+1}$, by using Lemma \ref{lemmaA1}, we have
\begin{equation}
\frac{1}{\ell_{n}}\mathbb{P}\left( \frac{\ell_{n}}{nh^{d+2}}%
\sum_{j=1}^{n}\big (\mathbb{E}_{j}W_{ji}-\mathbb{E}(
\mathbb{E}_{j}W_{ji})
\big ) \geq B\frac{\gamma _{n}}{\ell_{n}}\right) \leq \frac{1}{\ell_{n}}\cdot%
n^{-CB}+\frac{1}{\ell_{n}}\cdot\frac{n}{q_{1}}\cdot\beta \left(
q_{1}\right)\leq \frac{1}{n(\log n)^2} ,  \label{b3}
\end{equation}%
where we use the fact $n\left(\mathbb{E}\left\vert
\mathbb{E}_{j}W_{ji}\right\vert
^{r_{1}}\right) ^{\frac{2}{r_{1}}}\leq $ $q_{1}\cdot Ch^{d-1}\cdot \frac{%
\gamma _{n}}{\ell_{n}}\cdot\frac{nh^{d+2}}{\ell _{n}}$ with $\frac{2}{r_{1}}=1-%
\frac{1+\varepsilon }{r}$. Also, it holds that $\frac{\ell _{n}}{nh^{d+2}}%
\sum_{j=1}^{n}\mathbb{E}\left(\mathbb{E}_{j}(W_{ij}+W_{ji})
\right) \leq \frac{C\gamma_{n}}{\ell_{n}}$. Similarly,
\begin{equation*}
\frac{1}{\ell_{n}}\mathbb{P}\left( \frac{\ell_{n}}{nh^{d+2}}%
\sum_{j=1}^{n}\left(\mathbb{E}_{j}W_{ij}-\mathbb{E}\left(
\mathbb{E}_{j}W_{ij}\right) \right) \geq B\frac{\gamma_{n}}{\ell_{n}}\right)
\leq \frac{C}{n(\log n)^2}.
\end{equation*}

Next, we focus on $I_{32}$. Adopting the same method as
that of $I_{31}$, let $L_{2}$ be the corresponding degenerated
U-statistic resulting
from $I_{32}$. For $k=4$, since $\mathbb{E}_{ij}^{ks}\leq C\frac{\ell_{n}}{h}%
h^{ks+d-2}$, it follows that
\begin{equation*}
\frac{1}{\ell_{n}}\cdot\frac{\ell_{n}^{k}}{\gamma
_{n}^{k}}\cdot\mathbb{E}_{2}^{k}\leq
\frac{C}{\ell_{n}}\cdot\frac{\ell_{n}^{k}}{\gamma _{n}^{k}}\cdot n^{k}\cdot \left( \frac{1}{%
n^{2}h^{d+1}}\right)^{k}\cdot \left(\frac{\ell_{n}}{h}h^{ks+d-2}\right)^{1-%
\frac{k+1+\varepsilon }{r}}\leq \frac{C}{n(\log n)^2}.
\end{equation*}%
Because $\left\vert \mathbb{E}_{i}U_{ij}\right\vert \leq Ch^{d-1}$ and $%
\mathbb{E}\left\vert \mathbb{E}_{i}U_{ij}\right\vert ^{r_{1}}\leq
Ch^{r_{1}\left( d-1\right) }\frac{\ell _{n}}{h}$, it holds that
\begin{equation*}
\frac{1}{\ell _{n}}\mathbb{P}\left\{ \left\vert \frac{1}{nh^{d+1}}%
\sum_{i=1}^{n}\left(\mathbb{E}_{i}U_{ij}-\mathbb{E}\left(
\mathbb{E}_{i}U_{ij}\right)
\right) \right\vert \geq \frac{B\gamma _{n}}{\ell_{n}}\right\} \leq \frac{C%
}{n(\log n)^2},
\end{equation*}
where the fact%
\begin{equation*}
n\left(\mathbb{E}\left\vert \mathbb{E}_{i}U_{ij}\right\vert^{r_{1}}\right) ^{\frac{%
2}{r_{1}}}\leq Cq_{1}\cdot h^{d-1}\cdot \frac{Bnh^{d+1}\gamma_{n}}{\ell_{n}%
}
\end{equation*}%
is used. Also, it holds that
$\frac{1}{nh^{d+1}}\sum_{i=1}^{n}\mathbb{E}\big (\mathbb{E}_{i}(
U_{ij}+U_{ji}) \big ) \leq \frac{C\gamma_{n}}{\ell_{n}}$.
Similarly,
\begin{equation*}
\frac{1}{\ell _{n}}\mathbb{P}\left( \left\vert \frac{1}{nh^{d+1}}%
\sum_{i=1}^{n}\big ( \mathbb{E}_{i}U_{ji}-\mathbb{E}(
\mathbb{E}_{i}U_{ji})\big ) \right\vert \geq B\frac{\gamma_{n}}{\ell_{n}}\right) \leq \frac{C}{%
n(\log n)^2}.
\end{equation*}

We now consider the last term on the RHS of \eqref{b4}.
Here, we assume that the two notations $U_n$ and $\psi_{i}$ are the
same as that appears in part \textit{i)} of Theorem 3.1, with $x_u$ replaced
by $x_{u,k}$. Analogous to \eqref{U2}, for $k=8$, it can be inferred
that
\begin{equation*}
\frac{1}{\gamma_{n}^{k}\ell_{n}}\cdot \mathbb{E}U_{n}^{k}\leq
\frac{1}{\gamma_{n}^{k}\ell_{n}}\cdot
\frac{n^kh^{d\left(1-\frac{k+1+\varepsilon}{r}\right) }}{%
(n^{2}h^{d+1})^{k}}\leq \frac{C}{n(\log n)^2}.
\end{equation*}%
Noting that $\left\vert \psi_{i}\right\vert \leq Ch^{d}$ and taking $q=\sqrt{%
\frac{nh^{1-\frac{1+\varepsilon _{0}}{r}}}{\log n}}$, it follows
that
\begin{equation*}
\frac{1}{\ell_{n}}\mathbb{P}\left( \left\vert \frac{1}{nh^{d+1}}%
\sum_{i=1}^{n}\psi_{i} \right\vert \geq \gamma _{n}\right) \leq \frac{C}{%
n(\log n)^2}.
\end{equation*}

Finally, we  consider the second term on the RHS of \eqref{b4}. As previously, let $L_{3}$ be the degenerated U-statistic
related to this term. Then, we have
\begin{equation*}
\frac{1}{\ell_{n}}\cdot \frac{\ell_{n}^{2}}{\gamma_{n}^{2}}\cdot
\mathbb{E}L_{3}^{2}\leq \frac{C}{\ell _{n}}\cdot \frac{\ell_{n}^{2}}
{\gamma_{n}^{2}}\cdot n^{2}\left(
\frac{1}{n^{2}h^{d+1}}\right) ^{2}\cdot \left( h^{d}\right)
^{1-\frac{2+\varepsilon }{r_{3}}}=\frac{C\ell_{n}}{\gamma
_{n}^{2}}\cdot \frac{1}{n^{2}h^{d+2+\frac{d\left( 2+\varepsilon \right) }{%
r_{3}}}}\leq \frac{C}{n\left( \log n\right) ^{2}}.
\end{equation*}%
Also, since $\left\vert \mathbb{E}_{i}\varphi_{ij}(x_{u,k})\right\vert
\leq Ch^{d-1}$ and $\mathbb{E}\left\vert \mathbb{E}_{i}\varphi
_{ij}(x_{u,k})\right\vert
^{r_{2}}\leq Ch^{r_{2}\left( d-1\right) +1}$, it follows that%
\begin{equation*}
\frac{1}{\ell_{n}}\mathbb{P}\left( \left\vert \frac{1}{nh^{d+1}}%
\sum_{i=1}^{n}\mathbb{E}_{i}\varphi_{ij}(x_{u,k})\right\vert \geq
\frac{B\gamma_{n}}{\ell_{n}}\right) \leq \frac{C}{n\left( \log
n\right) ^{2}}.
\end{equation*}%
Hence, \eqref{b7} holds.

Next, we  prove (3.6) for the  $u=1$. Here, we adopt
the same notations as in the proof of \textit{ii)} of Theorem 3.1. From
\eqref{q1exp}, it
suffices to show that $\widehat{c}-c=O(\gamma_{n}) $ and $%
\sup_{x_{1}\in \lbrack a_{1},b1]}\left\vert
\int_{x_{1,0}}^{x_{1}}L_{n}(t_{1}){\rm d}t_{1}\right\vert\newline
=O(\gamma_{n}) $. The first one can be inferred from
the proof of \textit{ii)} of
Theorem 3.1. Hence, we  just consider the second one. Recalling that $%
\frac{1}{c}I_{2}=I_{21}-I_{22}-I_{23}$, we only need to prove that $%
I_{21}=O(\gamma_{n})$ and $I_{22}=O(\gamma_{n}) $%
. Now, analogous to the previous proof for the  $2\leq u\leq d$,
these two results follow immediately.
\end{proof}

\subsection*{Proof of Theorem 5.1}  % theorem 4.5.2 on p. 79
\proof%
\textit{i)} For convenience of notation, let $\tau_{n}=\sqrt{\frac{\log n}{%
nh_{G}^{1+\frac{1+\varepsilon }{r}}}}$, $\delta =\frac{1}{\sqrt{nh^{1+\frac{%
1+\varepsilon }{r}}}}$ and $G_{n}(v) $ is the $\alpha
$th conditional quantile of $F_{n}(y|v)$. First, we show that with
probability one
\begin{equation}
\sup_{y\in \mathbb{R}}\left\vert
\widehat{F}_{n}(y|v)-F_{n}(y|v)\right\vert =O\left(
\frac{n^{-\varepsilon_0}}{\sqrt{nh_G}} \right) . \label{rate1}
\end{equation}%
In fact, this conclusion can be established from the following two
relationships
\begin{equation}
\sup_{y\in \mathbb{R}}\left\vert \sum_{j=1}^{n}\left( K_{G}\left( \frac{v-%
\widehat{q}_{0}(X_{j}) }{h_{G}}\right) -K_{G}\left( \frac{%
v-q_{0}(X_{j}) }{h_{G}}\right) \right) \mathbb{I}\left(
Y_{i}\leq y\right) \right\vert =O\left(
\sqrt{n^{1-2\varepsilon_0}h_G}  \right) \label{KRate2}
\end{equation}%
and
\begin{equation}
I=\sum_{j=1}^{n}\left( K_{G}\left( \frac{v-\widehat{q}_{0}\left( X_{j}\right) }{%
h_{G}}\right) -K_{G}\left( \frac{v-q_{0}\left(X_{j}\right)
}{h_{G}}\right) \right) =O\left( \sqrt{n^{1-2\varepsilon_0}h_G}
\right) . \label{KRate1}
\end{equation}%
Since the proofs of these two relationships are completely the same,
we only verify the\ second relationship \eqref{KRate1}. According to
Taylor's expansion, there exist $0\leq \lambda _{j}\leq 1,$
$j=1,\ldots,n,$ such that $I$ can be written as
\begin{equation*}
\sum_{j=1}^{n}K_{G}^{\prime }\left( \frac{v-q_{0}\left(X_{j}\right) }{h_{G}}%
\right) \frac{\widehat{q}_{0}(X_{j}) -q_{0}(X_{j}) }{%
h_{G}}+\sum_{j=1}^{n}K_{G}^{\prime \prime }\left(
\frac{v-q_{0}(X_{j}) +\bar{\theta}_{j}}{h_{G}}\right)
\frac{\big (\widehat{q}_{0}(X_{j}) -q_{0}(
X_{j}) \big ) ^{2}}{h_{G}^{2}},
\end{equation*}%
where $\bar{\theta}_{j}=\lambda _{j}\big ( \widehat{q}_{0}(
X_{j}) -q_{0}(X_{j}) \big )$. Let $I_{1}\
$and $I_{2}$ denote the two terms above, respectively. According to
variable substitution, integration by parts, conditions (C1) and (C2), it can be
shown subsequently that
\begin{align}
-\mathbb{E}\left\vert K_{G}^{\prime \prime }\left( \frac{v-q_{0}\left(
X_{j}\right) }{h_{G}}\right) \right\vert
&=\!h_{G}\!\int_{-1}^{1}K_{G}^{\prime \prime }\left( t\right)
f_{q_{0}}\left( v-th_{G}\right)
{\rm d} t=h_{G}^{2}\int_{-1}^{1}f_{q_{0}}^{\prime }\left( v-th_{G}\right)
K_{G}^{\prime }\left( t\right) {\rm d} t  \notag \\
&= h_{G}^{3}\int_{-1}^{1}f_{q_{0}}^{\prime \prime }\left(
v-th_{G}\right) K_{G}\left( t\right) {\rm d} t=O(h_{G}^{3}).  \label{b10}
\end{align}%
Likewise, it follows that
\begin{equation}
\mathbb{E}\left\vert K_{G}^{\prime }\left( \frac{v-q_{0}\left(
X_{j}\right) }{h_{G}}\right) \right\vert =O( h_{G}^{2}) .
\label{b19}
\end{equation}%
By the standard method of the proof of Strong Law of Large Numbers (SLLN) we have
\begin{equation}
\sum_{j=1}^{n}\left( \left\vert K_{G}^{\prime }\left(
\frac{v-q_{0}\left(X_{j}\right) }{h_{G}}\right) \right\vert
-\mathbb{E}\left\vert K_{G}^{\prime }\left( \frac{v-q_{0}\left(
X_{j}\right) }{h_{G}}\right)
\right\vert \right) =O\left( \sqrt{nh_{G}^{1-\frac{1+\varepsilon }{r}}}%
\right) .  \label{b8}
\end{equation}%
Thus, from the two points above, Lemma 4.1 and condition (C3), it can
be inferred that $\left\vert I_{1}\right\vert =O(nh_{G}\delta
)$. By using conditions (C1) and (C3), for some fixed constant $\varepsilon
_{0}>0$, we can see that
\begin{equation}
\left\vert I_{2}\right\vert \leq \sum_{j=1}^{n}\left( \left\vert
K_{G}^{\prime \prime }\left( \frac{v-q_{0}(X_{j}) }{h_{G}}%
\right) \right\vert +\mathbb{I}\left( \frac{\left\vert v-q_{0}\left(
X_{j}\right) \right\vert }{h_{G}}\leq 1+\varepsilon _{0}\right)
\frac{\delta }{h_{G}}\right) \left( \frac{\delta }{h_{G}}\right)
^{2}\equiv I_{21}+I_{22} \label{b16}
\end{equation}%
holds for $n$ sufficiently large. Analogous to the proof of relationship
\eqref{b8}, by using Lemma \ref{lemmaA1}, we have
\begin{equation}
\sum_{j=1}^{n}\left[\mathbb{I}\left( \frac{\left\vert v-q_{0}(
X_{j}) \right\vert }{h_{G}}\leq 1+\varepsilon_{0}\right)
-\mathbb{E}\left( \frac{\left\vert v-q_{0}(X_{j})
\right\vert
}{h_{G}}\leq 1+\varepsilon _{0}\right) \right] =O\left( \left( nh_{G}^{1-%
\frac{1+\varepsilon }{r}}\right) ^{\frac{1}{2}}\right) ,  \label{b9}
\end{equation}%
where we use the facts $q=\left( nh_{G}^{1-\frac{1+\varepsilon
}{r}}/\log n\right) ^{\frac{1}{2}}$ and $nq^{-(r+1)}\rightarrow 0$.
From this, condition (C3) and the fact
\begin{equation*}
n\mathbb{E}\left( \frac{\left\vert v-q_{0}(X_{j})
\right\vert }{h_{G}}\leq 1+\varepsilon_{0}\right) =O(nh_{G}) ,
\end{equation*}%
we know that $I_{22}=o\left( nh_{G}\delta \right) .$ As for
$I_{21}$,
similar to the proof of \eqref{b9}, we can establish%
\begin{equation*}
\sum_{j=1}^{n}\left( \left\vert K_{G}^{\prime \prime }\left( \frac{%
v-q_{0}(X_{j}) }{h_{G}}\right) \right\vert
-\mathbb{E}\left\vert
K_{G}^{\prime \prime }\left( \frac{v-q_{0}\left(X_{j}\right) }{h_{G}}%
\right) \right\vert \right) =O\left( \sqrt{nh_{G}^{1-\frac{1+\varepsilon }{r}%
}}\right) .
\end{equation*}%
From this, \eqref{b10} and condition (C3), we see that $I_{21}=o(
nh_{G}\delta
)$, where we use the fact $\frac{\delta \sqrt{\log n}}{h_{G}^{2}\sqrt{%
nh_{G}^{1+\frac{1+\varepsilon }{r}}}}\rightarrow 0$. Hence,
\eqref{KRate1} holds.

Next, in our  of mixing processes, we adopt the standard
procedure of the conditional quantiles \citep[see, e.g.,][Lemma 2.2]{mehra92},
to prove that
\begin{equation}
\sup_{\left\vert y-G\left( v\right) \right\vert \leq C\tau
_{n}}\left\vert F_{n}(y|v)-F_{n}(G\left( v\right) |v)-F(y|v)+\left(
1-\alpha \right)
\right\vert =O\left( \tau _{n}^{\frac{3}{2}-\frac{1+\varepsilon }{2r}%
}\right).  \label{FRate1}
\end{equation}%
Divide the interval $\left\vert y-G\left( v\right) \right\vert \leq
C\tau_{n}$ into a sequence of subintervals with length $\tau_{n}^{\frac{3}{2}-%
\frac{1+\varepsilon }{2r}}$. Let $\eta_{r}$, $r=1,2,\ldots,d_{n}$,
be the
corresponding grid points. Then, $d_{n}=C\tau_{n}^{-\frac{1}{2}+\frac{%
1+\varepsilon }{2r}}$. Then, it holds that
\begin{eqnarray}
\!\!\!\!\!\!\!\!\!\!&\!\!&\!\!\sup_{\left\vert y-G(v)
\right\vert \leq C\tau_{n}}\left\vert F_{n}(y|v)-F_{n}(G(
v)
|v)-F(y|v)+\left( 1-\alpha \right) \right\vert  \notag \\
\!\!\!\!\!\!\!\!\!\!&\!\!\leq \!\!&\!\!\max_{\left\vert r\right\vert
\leq Cd_{n}}\left\vert F_{n}(\eta_{r}|v)\! - \!F(\eta_{r}|v)\! -\!
F_{n}(G(v) |v)\! + \!\left( 1-\alpha \right) \right\vert
\!\!+\!\!\max_{\left\vert r\right\vert \leq Cd_{n}}\left\vert
F(\eta_{r+1}|v)-F(\eta_{r}|v)\right\vert \!.   \qquad\label{b13}
\end{eqnarray}%
From condition (C4), it is easy to see that
\begin{equation}
\max_{\left\vert r\right\vert \leq Cd_{n}}\left\vert
F(\eta_{r+1}|v)-F(\eta_{r}|v)\right\vert =
O\left( \tau {n}^{\frac{3}{2}-\frac{1+\varepsilon }{2r}%
}\right) .  \label{b14}
\end{equation}%
Let $\xi_{i,r,1}=K\left( \frac{v-q_{0}(X_{i})}{h_{G}}\right) \left(
\mathbb{I}(Y_{i}\leq \eta _{r})-\mathbb{I}(Y_{i}\leq G(v)\right) $ and $%
b_{n}=Bnh_{G}\tau_{n}^{\frac{3}{2}-\frac{1+\varepsilon }{2r}}$. By
using
Lemma \ref{lemmaA1}, we have%
\begin{equation}
d_{n}\mathbb{P}\left\{\left\vert \sum_{i=1}^{n}\left( \xi
_{i,r,1}-\mathbb{E}\xi
_{i,r,1}\right) \right\vert \geq b_{n}\right\} \leq d_{n}\left( n^{-CB}+%
\frac{n}{q}\beta \left( q\right) \right) \leq \frac{C}{n\left( \log
n\right)^{2}},  \label{b11}
\end{equation}%
where $q$ is equal to $\frac{b_{n}}{\log n}$. In view of condition (C4), it
follows that
\begin{equation*}
n\mathbb{E}\xi_{i,r,1}=nh_{G}\big( \eta _{r}-G(v)\big )
f_{q_{0}}(v) \big ( 1+O(h_{G}) \big )
\end{equation*}%
and
\begin{equation*}
nh_{G}\big( F(\eta_{r}|v)-\left( 1-\alpha \right) \big)
=nh_{G}\big (\eta_{r}-G(v)\big ) f_{q_{0}}(v) \big (
1+O(h_{G}) \big ) .
\end{equation*}%
We then have%
\begin{equation}
n\mathbb{E}\xi_{i,r,1}-nh_{G}\big( F(\eta_{r}|v)-\left( 1-\alpha
\right) \big ) =O(nh_{G}^{2}\tau_{n}).  \label{b12}
\end{equation}%
Also, it can be shown that $\sum_{i=1}^{n}K_{G}\left( \frac{v-q_{0}(X_{i})}{%
h_{G}}\right) =nh_{G}f_{q_{0}}\left( v\right) \big( 1+O(h_{G}^{2}) \big ) $. Hence, from this, \eqref{b11},
\eqref{b12} and the fact $b_{n}\geq $ $nh_{G}^{2}\tau_{n}$, it can
be inferred that
\begin{equation}
d_{n}\mathbb{P}\left\{ \left\vert F_{n}(\eta _{r}|v)-F_{n}(G(
v)
|v)-F(\eta _{r}|v)+\left( 1-\alpha \right) \right\vert \geq B\tau _{n}^{%
\frac{3}{2}-\frac{1+\varepsilon }{2r}}\right\} \leq \frac{C}{n\left(
\log n\right)^{2}}.  \label{b15}
\end{equation}%
Then \eqref{FRate1} is implied by \eqref{b13}, \eqref{b14} and
\eqref{b15}. From \eqref{rate1} and \eqref{FRate1}, we have
\begin{equation}
\sup_{\left\vert y-G\left( v\right) \right\vert \leq 2c\tau
_{n}}\left\vert \widehat{F}_{n}(y|v)-F_{n}(G(v)
|v)-F(y|v)+\left( 1-\alpha \right)
\right\vert =O\left( \tau _{n}^{\frac{3}{2}-\frac{1+\varepsilon }{2r}%
}+\delta \right) .  \label{FRate2}
\end{equation}%
Next, we prove (5.2). In order to do so, we first show that,
with probability one,
\begin{equation}
\left\vert G_{n}(v) -G(v) \right\vert
=O(\tau_{n}),  \label{rate4}
\end{equation}%
which is similar to Lemma 2.1 of \citet{mehra92}. For
any constant $M>0$, let $u_{n}^{+}=G(v)+M\tau _{n}$ and
$u_{n}^{-}=G(v)-M\tau _{n}$ and then consider the following
probability
\begin{equation*}
\mathbb{P}\left\{ \left\vert G_{n}(v)-G(v)\right\vert \geq M\tau
_{n}\right\} =\mathbb{P}\left\{ F_{n}\left( \left. u_{n}^{-}\right\vert
v\right) \leq 1-\alpha \right\} +\mathbb{P}\left\{ F_{n}\left( \left.
u_{n}^{+}\right\vert v\right) \geq 1-\alpha \right\} .
\end{equation*}%
Let $\zeta_{i}=K\left( \frac{v-q_{0}(X_{i})}{h_{G}}\right)
\mathbb{I}(Y_{i}\leq u_{n}^{+})$. From Lemma \ref{lemmaA1}, we have
\begin{equation*}
\mathbb{P}\left\{ \left\vert \sum_{i=1}^{n}\left( \zeta_{i}-\mathbb{E}\zeta
_{i}\right) \right\vert \geq M\tau _{n}\right\} \leq n^{-CM}+\frac{n}{q_{1}}%
\beta \left( q_{1}\right) \leq \frac{C}{n\left( \log n\right) ^{2}},
\end{equation*}%
where $q_{1}$ is taken as $\frac{1}{\tau _{n}h^{\frac{1+\varepsilon
}{r}}}$. From this and noting that $n\left(\mathbb{E}\zeta _{i}-\left(
1-\alpha \right) \mathbb{E}K\left( \frac{v-q_{0}(X_{i})}{h_{G}}\right)
\right) =O\left( nh_{G}\tau_{n}\right) $, it can be inferred that
$\mathbb{P}\left\{ F_{n}\left(
\left. u_{n}^{+}\right\vert v\right) \geq 1-\alpha \right\} \leq \frac{1}{%
n\left( \log n\right) ^{2}}$. Analogously, it can be shown that $%
\mathbb{P}\left\{ F_{n}\left( \left. u_{n}^{-}\right\vert v\right)\leq
1-\alpha
\right\} \leq \frac{1}{n\left( \log n\right)^{2}}$. And thus, we know that \eqref{rate4} holds. Then there exists some constant $C>0$ such that
\begin{equation}
1-\alpha \in \left[ F_{n}\big (G\left( v\right) -C\tau
_{n}|v\big ) ,F_{n}\big( G\left( v\right) +C\tau_{n}|v\big )
\right]  \label{interval}
\end{equation}%
holds with probability one. Let
\begin{equation*}
\xi_{i,r,2}=\xi _{i,r,2}\left( v\right) =K_{G}\left( \frac{v-q_{0}(X_{i})}{%
h_{G}}\right) \mathbb{I}\big( G(v)+C\tau _{n}<Y_{i}\leq G(v)+2C\tau
_{n}\big) .
\end{equation*}%
Then, from Lemma \ref{lemmaA1}, it follows that
\begin{equation}
\mathbb{P}\left\{ \left\vert \sum_{i=1}^{n}\left( \xi _{i,r,2}-\mathbb{E}\xi
_{i,r,2}\right) \right\vert \geq Bnh_{G}\tau _{n}^{\frac{3}{2}-\frac{%
1+\varepsilon }{2r}}\right\} \leq n^{-CB}+\frac{n}{q_{2}}\beta
(q_{2}) \leq \frac{C}{n\left( \log n\right) ^{2}},
\label{b25}
\end{equation}%
where $q_{2}=\frac{nh_{G}\tau_{n}^{\frac{3}{2}-\frac{1+\varepsilon }{2r}}}{%
\log n}$. From this and $\sum_{i=1}^{n}K_{G}\left( \frac{v-q_{0}(X_{i})}{%
h_{G}}\right) =nh_{G}f_{q_{0}}(v) +O(h_{G}^{2}) $, it can be inferred that
\begin{equation}
F_{n}\big(G(v)+2\tau_{n}|v\big)-F_{n}\big(G(v)+\tau _{n}|v\big)=c_{n1}\left(
v\right) \tau_{n}+O\left( \tau
_{n}^{\frac{3}{2}-\frac{1+\varepsilon }{2r}}\right) , \label{FRate3}
\end{equation}%
where $\liminf c_{n1}\left( v\right) >0$ as $n\rightarrow \infty $.
Similarly, it follows that
\begin{equation}
F_{n}\big(G(v)-2\tau_{n}|v\big)-F_{n}\big(G(v)-\tau_{n}|v\big)=-c_{n2}\left(
v\right) \tau_{n}+O\left( \tau
_{n}^{\frac{3}{2}-\frac{1+\varepsilon }{2r}}\right) , \label{FRate5}
\end{equation}%
where $\liminf c_{n2}\left( v\right) >0$. Therefore, from $\tau_{n}>\delta $%, \eqref{rate1}, \eqref{interval}, \eqref{FRate3}, \eqref{FRate5}
and monotonicity on the empirical conditional function, it can be
inferred subsequently that
\begin{eqnarray*}
&&\widehat{F}_{n}\big (G( v)-2C\tau_{n}|v\big ) \leq
F_{n}\big (G(v)-2C\tau_{n}|v\big ) +\delta \leq
F_{n}\big(G(v)
-C\tau_{n}|v\big) \\
&\leq &1-\alpha \leq F_{n}\big (G(v) +C\tau_{n}|v\big ) \leq
F_{n}\big (G(v)+2C\tau_{n}|v\big ) -\delta \leq \widehat{F}%
_{n}\big(G(v)+2C\tau_{n}|v\big) .
\end{eqnarray*}%
And this implies (5.2). In view of (5.2) and
\eqref{FRate2}, it can be shown that (5.3) holds.
\qed
\vspace{0.3cm}

\textit{ii)} To obtain the uniform convergence of (5.3), according
to the
previous proof, it suffices to show that \eqref{rate1}, \eqref{FRate1}, \eqref{rate4}, \eqref{FRate3} and \eqref{FRate5} hold uniformly for $v\in
V$. We deal with each relationship separately. To prove the results
on the uniform convergence for $v\in \mathcal{V}$, it is necessary
to divide $\mathcal{V}$ into smaller intervals with equal length
$d_{n}>0$. Assume that the corresponding grid
points are $v_{l}$, $l=1,2,\ldots ,O(d_{n}^{-1})$, assuming
$l$ is an integer. Denote the $%
l$th smaller interval $[v_{l},v_{l+1}]$ by $J_{l}$. Note that
$d_{n}$ may take different values at different s, for simplicity
of notation.

As for uniform convergence of \eqref{rate1}, it can be inferred from
the uniform convergence of \eqref{KRate2} and \eqref{KRate1}. Here,
since the
proofs of the two relationships are the same, it suffices to prove that 
\eqref{KRate1} holds uniformly for $v\in \mathcal{V}$. Then, going along
the same lines as the proof of \eqref{KRate1}, we need to prove that
$\sup_{v\in \mathcal{V}}\left\vert I_{1}\right\vert =O\left(
nh_{G}\delta _{n}\right) $, $\sup_{v\in\mathcal{V}}\left\vert
I_{21}\right\vert =o\left( nh_{G}\delta \right) $ and $\sup_{v\in
\mathcal{V}}\left\vert I_{22}\right\vert =o\left( nh_{G}\delta
\right) $. Since the proofs of these three relationships are
analogous, we here only prove the first one. Divide $\mathcal{V}$ as
being stated, and let $d_{n}=h_{G}^{2}$. Then, for $v\in J_{l}$, it
follows that
\begin{eqnarray*}
&&\left\vert K_{G}^{\prime }\left( \frac{v-q_{0}\left( X_{j}\right) }{h_{G}}%
\right) -K_{G}^{\prime }\left( \frac{v_{l}-q_{0}\left( X_{j}\right) }{h_{G}}%
\right) \right\vert\leq \frac{Cd_{n}}{h_{G}}\mathbb{I}\big (\left\vert
v_{l}-q_{0}\left(
X_{j}\right) \right\vert \leq h_{G}\big )  \\
&&\qquad\qquad\qquad+\, C\mathbb{I}\left( h_{G}-d_{n}\leq \left\vert
v_{l}-q_{0}\left( X_{j}\right) \right\vert \leq h_{G}+d_{n}\right)
\equiv\frac{Cd_{n}}{h_{G}}\xi_{i1}+C\xi_{i2}.
\end{eqnarray*}%
Thus, our objective is to show that%
\begin{equation}
\mathbb{P}\left( \cup _{l}\left( \frac{d_{n}}{h_{G}}\sum_{j=1}^{n}\xi
_{i1}\geq Cnh_{G}^{2}\right) \!\right) \leq \frac{1}{n\left( \log
n\right) ^{2}}\,\, \mbox{and}\,\,\mathbb{P}\left( \cup_{l}\left(
\sum_{j=1}^{n}\xi _{i2}\geq Cnh_{G}^{2}\right) \!\right) \leq
\frac{1}{n\left( \log n\right)^{2}}. \label{b18}
\end{equation}%
We  only prove the first one, because the proof of the other
one is
similar. Noting that $f_{q^{0}}\left( \cdot \right) $ is bounded on $%
\mathcal{V}$, we know that $n\mathbb{E}\xi _{i1}=O(nh_{G})
$. Then,
from Lemma A1, it can be derived that%
\begin{eqnarray*}
&&\mathbb{P}\left( \frac{d_{n}}{h_{G}}\sum_{j=1}^{n}\xi_{i1}\geq
CBnh_{G}^{2}\right) \leq \mathbb{P}\left( \sum_{j=1}^{n}\left( \xi
_{i1}-\mathbb{E}\xi_{i1}\right) \geq CBnh_{G}\right) \\
&\leq &2\exp \left\{ \frac{-\left( \frac{Cnh_{G}}{4}\right)
^{2}}{cn\left(
\mathbb{E}\xi _{i1}^{r_{2}}\right) ^{\frac{2}{r_{2}}}+\frac{2}{3}qM\frac{Cnh_{G}%
}{4}}\right\} +\!\frac{n}{q}\beta (q) \leq n^{-CB}+\!\frac{n}{q}%
\beta (q) \leq \frac{d_{n}}{n\left( \log n\right) ^{2}},
\end{eqnarray*}%
where $1-\frac{2}{r_{2}}=\frac{1+\varepsilon_{0}}{r}$ and $q=\frac{%
nh_{G}^{1+\frac{1+\varepsilon_{0}}{r}}}{\log n}$. This implies that
the first limit of \eqref{b18} holds. In view of \eqref{b8} and
Lemma \ref{lemmaA1},
\begin{eqnarray*}
&&\mathbb{P}\left( \cup_{l}\left( \left\vert
\sum_{j=1}^{n}K_{G}^{\prime }\left( \frac{v-q_{0}\left( X_{j}\right)
}{h_{G}}\right)
-\mathbb{E}K_{G}^{\prime }\left( \frac{v-q_{0}\left( X_{j}\right) }{h_{G}}%
\right) \right\vert \geq CBnh_{G}^{2}\right) \right) \\
&\leq &2\sum_{l}\left( \exp \left\{ \frac{-\left( \frac{Cnh_{G}^{2}}{4}%
\right) ^{2}}{cn\left(\mathbb{E}\xi _{i1}^{r_{2}}\right) ^{\frac{2}{r_{2}}}+%
\frac{2}{3}q_{1}M\frac{Cnh_{G}^{2}}{4}}\right\}
+\!\frac{n}{q_{1}}\beta
\left( q_{1}\right) \right) \\
&\leq & d_{n}^{-1}\left( n^{-CB}+\!\frac{n}{q_{1}}%
\beta \left( q_{1}\right) \right) \leq \frac{C}{n\left( \log
n\right) ^{2}},
\end{eqnarray*}%
where $q_{1}=\frac{nh_{G}^{2}}{\log n}$. Therefore, from this,
\eqref{b18}, Theorem 4.1 and the Borel-Cantelli lemma, we know
that $\sup_{v\in \mathcal{V}}\left\vert I_{1}\right\vert =O\left(
nh_{G}\delta \right)$.

We now begin to prove the uniform convergence of \eqref{FRate1}. For
the division of this, we take $d_{n}=\delta h_{G}$. First, note that
\begin{equation*}
I_{3}=\sup_{v\in J_{l}}\sup_{\left\vert y-G\left( v\right)
\right\vert \leq C\tau _{n}}\left\vert F_{n}(y|v)-F_{n}(G\left(
v\right) |v)-F(y|v)+\left( 1-\alpha \right) \right\vert \leq
I_{31}+I_{32},
\end{equation*}%
where
\begin{equation*}
I_{31}=\sup_{v\in J_{l}}\sup_{\left\vert y-G\left( v\right)
\right\vert \leq C\tau _{n}}\left\vert F_{n}(y|v)-F_{n}(G\left(
v\right) |v)-F(y|v)-\left( F_{n}(y|v_{l})-F_{n}(G\left( v_{l}\right)
|v_{l})-F(y|v_{l})\right) \right\vert
\end{equation*}%
and
\begin{equation*}
I_{32}=\sup_{v\in J_{l}}\sup_{\left\vert y-G\left( v\right)
\right\vert \leq C\tau _{n}}\left\vert F_{n}(y|v_{l})-F_{n}(G\left(
v_{l}\right) |v_{l})-F(y|v_{l})+\left( 1-\alpha \right) \right\vert
.
\end{equation*}%
Since $\left\vert G\left( v\right) -G\left( v_{l}\right) \right\vert
\leq
C\delta $ for any $v\in J_{l}$, it then can be seen that%
\begin{equation}
I_{32}\leq \sup_{\left\vert y-G\left( v_{l}\right) \right\vert \leq
2C\tau _{n}}\left\vert F_{n}(y|v_{l})-F_{n}(G\left( v_{l}\right)
|v_{l})-F(y|v_{l})+\left( 1-\alpha \right) \right\vert . \label{b24}
\end{equation}%
Let $I_{311}=\sup_{v\in J_{l}}\left\vert F_{n}(G\left( v\right)
|v)-F_{n}(G\left( v_{l}\right) |v_{l})\right\vert $,
$I_{312}=\sup_{v\in
J_{l}}\sup_{y}\left\vert F_{n}(y|v)-F_{n}(y|v_{l})\right\vert $ and $%
I_{313}=\sup_{v\in J_{l}}\sup_{y}\left\vert
F(y|v)-F(y|v_{l})\right\vert $. Then, $I_{31}\leq
I_{311}+I_{312}+I_{313}$. Set
\begin{equation*}
I_{3111}=\sup_{v\in J_{l}}\left\vert F_{n}(G\left( v_{l}\pm C\delta
\right) |v)-F_{n}(G\left( v_{l}\right) |v)\right\vert
\end{equation*}%
and $I_{3112}=\sup_{v\in J_{l}}\left\vert F_{n}(G\left( v_{l}\right)
|v)-F_{n}(G\left( v_{l}\right) |v_{l})\right\vert $. Then, we know that $%
I_{311}\leq I_{3111}+I_{3112}$. Note that\vspace{-0.2cm}
\begin{eqnarray*}
I_{3111} &\leq &2I_{312}+\left\vert F_{n}\big(G\left( v_{l}\pm C\delta
\right)
|v_{l}\big)-F_{n}(G\left( v_{l}\right) |v_{l})\right\vert \\
&\leq &2I_{312}+I_{32}+\left\vert F\big(G\left( v_{l}\pm C\delta \right)
|v_{l}\big)-F(G\left( v_{l}\right) |v_{l})\right\vert .
\end{eqnarray*}%
Then, from the facts that $I_{3112}\leq I_{312}$, $\left\vert
F\big(G\left( v_{l}\pm C\delta \right) |v_{l}\big)-F\big(G\left( v_{l}\right)
|v_{l}\big)\right\vert \leq C\delta $ and $I_{313}\leq C\delta $, which
is implied by condition (C6), it
suffices to consider $I_{312}$ and $I_{32}$. To get the result $%
\mathbb{P}\left\{ \left\vert \cup _{l}\left\{ I_{312}\geq C\delta
\right\} \right\vert \right\} \leq \frac{C}{n\left( \log n\right)
^{2}}$, it suffices to prove that
\begin{equation*}
\mathbb{P}\left\{\! \cup _{l}\left\{\! \sup_{v\in
J_{l}}\sup_{y}\left\vert
\sum_{j=1}^{n}\left( K_{G}\left( \frac{v-q_{0}\left( X_{j}\right) }{h_{G}}%
\right) -K_{G}\left( \frac{v_{l}-q_{0}\left( X_{j}\right)
}{h_{G}}\right) \right) \mathbb{I}\left( Y_{i}\leq y\right) \right\vert \geq
Cnh_{G}\delta \!\right\} \!\right\}
\end{equation*}%
and
\begin{equation}
\mathbb{P}\left\{ \cup _{l}\left\{ \sup_{v\in J_{l}}\left\vert
\sum_{j=1}^{n}\left( K_{G}\left( \frac{v-q_{0}\left( X_{j}\right) }{h_{G}}%
\right) -K_{G}\left( \frac{v_{l}-q_{0}\left( X_{j}\right)
}{h_{G}}\right) \right) \right\vert \geq Cnh_{G}\delta \right\}
\right\}  \label{b20}
\end{equation}%
are less than $\frac{C}{n\left( \log n\right) ^{2}}$. In considering
that the proofs of two relationships above are the same, we only
prove the later one. Since
\begin{eqnarray*}
&&\left\vert \sum_{j=1}^{n}\left( K_{G}\left( \frac{v-q_{0}\left(
X_{j}\right) }{h_{G}}\right) -K_{G}\left( \frac{v_{l}-q_{0}\left(
X_{j}\right) }{h_{G}}\right) \right) \right\vert \\
&\leq &C\frac{d_{n}}{h_{G}}\mathbb{I}\big (\left\vert v_{l}-q_{0}\left(
X_{j}\right) \right\vert \leq h_{G}\big ) +C\mathbb{I}\big (
h_{G}-d_{n}\leq \left\vert v_{l}-q_{0}(X_{j})
\right\vert \leq h_{G}+d_{n}\big ) ,
\end{eqnarray*}%
by using Lemma \ref{lemmaA1}, we obtain%
\begin{equation*}
\mathbb{P}\left\{ \cup _{l}\left\{ \left\vert \sum_{j=1}^{n}\frac{d_{n}}{h_{G}}%
\mathbb{I}\left( \left\vert v_{l}-q_{0}\left( X_{j}\right) \right\vert
\leq
h_{G}\right) \right\vert \geq Cnh_{G}\delta \right\} \right\} \leq \frac{C}{%
n\left( \log n\right) ^{2}}
\end{equation*}%
and%
\begin{equation*}
\mathbb{P}\left\{\cup_{l}\left\{ \left\vert \sum_{j=1}^{n}\mathbb{I}\big (
h_{G}-d_{n}\leq \left\vert v_{l}-q_{0}\left( X_{j}\right)
\right\vert \leq h_{G}+d_{n}\big ) \right\vert \geq Cnh_{G}\delta
\right\} \right\} \leq \frac{C}{n\left(\log n\right) ^{2}}.
\end{equation*}%
In view of \eqref{b24}, by adopting the same proof as that of 
\eqref{FRate1}, we arrive at \[\mathbb{P}\big \{\cup_{l}\{I_{32}\geq C\delta\}
\big\} \leq \frac{C}{n\left( \log n\right) ^{2}}.\] Thus, we know that \eqref{FRate1} holds uniformly for $v\in \mathcal{V}$ with the convergence rate $\delta$.

Next, we  verify that \eqref{rate4} holds uniformly for $v\in \mathcal{V%
}$. Let $d_{n}=\tau _{n}h_{G}a_{n}$, where the positive reals $%
a_{n}\rightarrow 0$ at a rather slow rate. For $v\in J_{l}$, since $%
|G(v)-G(v_{l})|\leq C\tau _{n}$, it suffices to prove that
\begin{equation}
\sum_{l}\mathbb{P}\left\{ \sup_{v\in J_{l}}|G_{n}(v)-G_{n}(v_{l})|\geq
C\tau _{n}\right\} \leq \frac{C}{n\left( \log n\right) ^{2}}
\label{b17}
\end{equation}%
and
\begin{equation*}
\mathbb{P}\left\{ \cup _{l}(|G_{n}(v_{l})-G(v_{l})|\geq C\tau
_{n})\right\} \leq \frac{C}{n\left( \log n\right) ^{2}}.
\end{equation*}%
The second relationship above can be obtained similarly as
\eqref{rate4}. Analogous to the proof of $I_{312}$, it follows that
\begin{equation*}
\sum_{l}\mathbb{P}\left\{ \sup_{v\in J_{l}}\sup_{y}\left\vert
F_{n}(y|v)-F_{n}(y|v_{l})\right\vert \geq ca_{n}\tau _{n}\right\} \leq \frac{%
C}{n\left( \log n\right) ^{2}}.
\end{equation*}%
Then, for any event $w\in $ $\left\{ \sup_{v\in
J_{l}}\sup_{y}\left\vert F_{n}(y|v)-F_{n}(y|v_{l})\right\vert
<ca_{n}\tau _{n}\right\} $ and $v\in J_{l}$, using relationship
\eqref{FRate3},
\begin{eqnarray*}
&&F_{n}\big(G(v_{l})-2C\tau_{n}|v\big)\leq F_{n}\big(G(v_{l})-2C\tau
_{n}|v_{l}\big)+Ca_{n}\tau_{n}\leq F_{n}\big(G(v_{l})-C\tau_{n}|v_{l}\big) \\
&\leq &1-\alpha \leq F_{n}\big(G(v_{l})+C\tau_{n}|v_{l}\big)\leq
F_{n}\big(G(v_{l})+2C\tau_{n}|v_{l}\big)-Ca_{n}\tau_{n}\leq
F_{n}\big(G(v_{l})+2C\tau_{n}|v\big).
\end{eqnarray*}%
And this together with \eqref{b17} implies \eqref{rate4}.

Finally, we show that \eqref{FRate3} and \eqref{FRate5} hold
uniformly for $v\in \mathcal{V}$. \ Let $d_{n}=a_{n}\tau_{n}$.\
First, note that
\begin{eqnarray*}
&&\left\vert \sum_{i=1}^{n}\big (\xi_{i,r,2}\left( v\right) -\xi
_{i,r,2}\left( v_{l}\right) \big ) \right\vert \\
&\leq &\sum_{i=1}^{n}\left( K_{G}\left( \frac{v_{l}-q_{0}(X_{i})}{h_{G}}%
\right) +\left\vert K_{G}\left( \frac{v-q_{0}(X_{i})}{h_{G}}\right)
-K_{G}\left( \frac{v_{l}-q_{0}(X_{i})}{h_{G}}\right) \right\vert \right) \\
&&\cdot \left\vert \mathbb{I}\big (G(v)+C\tau _{n}<Y_{i}\leq
G(v)+2C\tau _{n}\big ) -\mathbb{I}\left( G(v_{l})+C\tau_{n}<Y_{i}\leq
G(v_{l})+2C\tau_{n}\right) \right\vert \\
&&+\sum_{i=1}^{n}\left\vert K_{G}\left(
\frac{v-q_{0}(X_{i})}{h_{G}}\right) -K_{G}\left(
\frac{v_{l}-q_{0}(X_{i})}{h_{G}}\right) \right\vert \mathbb{I}\big(
G(v_{l})+C\tau _{n}<Y_{i}\leq G(v_{l})+2C\tau_{n}\big) .
\end{eqnarray*}%
Also, since $\left\vert G(v)-G(v_{l})\right\vert \leq ca_{n}\tau _{n}$ for $%
v\in J_{l}$, it holds that
\begin{eqnarray*}
&&\left\vert K_{G}\left( \frac{v-q_{0}(X_{i})}{h_{G}}\right)
-K_{G}\left(
\frac{v_{l}-q_{0}(X_{i})}{h_{G}}\right) \right\vert \\
&\leq &C\tau_{n}\mathbb{I}\big (\left\vert
v_{l}-q_{0}(X_{i})\right\vert \leq h_{G}\big ) +\mathbb{I}\big (
h_{G}\left( 1-C\tau_{n}\right) <v_{l}-q_{0}(X_{i})\leq h_{G}\left(
1+C\tau_{n}\right) \big ) 
\end{eqnarray*} \vspace{-0.4cm}
and \vspace{-0.2cm}
\begin{eqnarray*}
&&\left\vert \mathbb{I}\big( G(v)+C\tau_{n}<Y_{i}\leq G(v)+2C\tau
_{n}\big) -\mathbb{I}\big(G(v_{l})+C\tau_{n}<Y_{i}\leq
G(v_{l})+2C\tau _{n}\big)
\right\vert \\
&\leq &\mathbb{I}\big(G(v_{l})+C\tau_{n}-Ch_{G}\tau_{n}<Y_{i}\leq
G(v_{l})+C\tau_{n}+Ch_{G}\tau_{n}\big ) \\
&&+\mathbb{I}\big (G(v_{l})+2C\tau_{n}-Ch_{G}\tau_{n}<Y_{i}\leq
G(v_{l})+2C\tau_{n}+Ch_{G}\tau_{n}\big ).
\end{eqnarray*}%
From the three relationships above, we see that $\left\vert
\sum_{i=1}^{n}\left( \xi_{i,r,2}\left( v\right) -\xi_{i,r,2}\left(
v_{l}\right) \right) \right\vert $ has a bound which is independent
of $v$. Let
\begin{equation*}
\zeta _{i1}=K_{G}\left( \frac{v_{l}-q_{0}(X_{i})}{h_{G}}\right)
\mathbb{I}\big (G(v_{l})+c\tau_{n}-ca_{n}\tau_{n}<Y_{i}\leq
G(v_{l})+c\tau_{n}+ca_{n}\tau_{n}\big)
\end{equation*}%
and%
\begin{equation*}
\zeta_{i2}=c\frac{a_{n}\tau_{n}}{h_{G}}\mathbb{I}\big ( \left\vert
v_{l}-q_{0}(X_{i})\right\vert \leq h_{G}\big ) \mathbb{I}\big (
G(v_{l})+c\tau _{n}<Y_{i}\leq G(v_{l})+2c\tau_{n}\big) .
\end{equation*}%
Next, we  focus on the convergence rate of this bound. Here, we
only consider the two terms
\begin{equation*}
\sum_{i=1}^{n}\zeta_{i1}\quad \mbox{ and }\quad \sum_{i=1}^{n}\zeta
_{i2}.
\end{equation*}%
The other terms can be dealt with analogously. Noting that
\begin{equation*}
n\mathbb{E}\zeta _{i1}=Cf_{q_{0}}(v_{l}) nh_{G}a_{n}\tau
_{n}\big( 1+o(1)\big ) \mbox{  and  }n\mathbb{E}\zeta
_{i1}=Cna_{n}\tau _{n}^{2}f_{q_{0}}(v_{l}) \big (
1+o(1)\big) ,
\end{equation*}%
and then by using Lemma \ref{lemmaA1}, we have
\begin{eqnarray*}
&&\mathbb{P}\left( \cup _{l}\left( \left\vert \sum_{i=1}^{n}\zeta
_{i1}\right\vert \geq cnh_{G}a_{n}\tau_{n}\right) \right) \\
&\leq &2d_{n}^{-1}\exp \left\{ \frac{-\left( \frac{Cnh_{G}a_{n}\tau _{n}}{4}%
\right) ^{2}}{Cn\left(\mathbb{E}\zeta_{i1}^{r_{2}}\right) ^{\frac{2}{r_{2}}}+%
\frac{2}{3}q\frac{Cnh_{G}a_{n}\tau_{n}}{4}}\right\} +d_{n}^{-1}\!\frac{n}{q}%
\beta(q) \leq
d_{n}^{-1}n^{-CB}+d_{n}^{-1}\!\frac{n}{q}\beta(q)
\end{eqnarray*}%
and, similarly,
\begin{equation*}
\mathbb{P}\left( \cup _{l}\left( \left\vert \sum_{i=1}^{n}\zeta
_{i2}\right\vert
\geq Cnh_{G}a_{n}\tau_{n}\right) \right) \leq d_{n}^{-1}n^{-CB}+d_{n}^{-1}\!%
\frac{n}{q}\beta(q) ,
\end{equation*}%
where $q=\frac{a_{n}nh_{G}\tau_{n}}{\log n}$ and $\frac{2}{r_{2}}=1-\frac{%
1+\varepsilon }{r}$. By a similar method as that of \eqref{b25}, it
follows that
\begin{equation*}
\mathbb{P}\left\{ \cup _{l}\left( \left\vert \sum_{i=1}^{n}\left( \xi
_{i,r,2}(v_{l})-\mathbb{E}\xi_{i,r,2}(v_{l})
\right) \right\vert \geq Bnh_{G}a_{n}\tau _{n}\right) \right\} \leq
\frac{C}{n\left( \log n\right) ^{2}}.
\end{equation*}%
Therefore, from the analysis above, we have
\begin{equation*}
\sup_{v\in \mathcal{V}}\left\vert \sum_{i=1}^{n}\big(\xi_{i,r,2}(v) -\mathbb{E}\xi _{i,r,2}(v) \big )
\right\vert =O(nh_{G}a_{n}\tau _{n}) .
\end{equation*}%
Similar as the previous proof of \eqref{b20}, it follows that
\begin{equation*}
\sup_{v\in \mathcal{V}}\left\vert \sum_{i=1}^{n}\left( K_{G}\left( \frac{%
v-q_{0}(X_{i})}{h_{G}}\right) -\mathbb{E}K_{G}\left( \frac{v-q_{0}(X_{i})}{h_{G}}%
\right) \right) \right\vert =O(nh_{G}a_{n}) .
\end{equation*}%
Then, from the two relationships above and the condition $%
\inf_{v}f_{q_{0}(X_{i})}(v) >0$, it follows that
\begin{equation*}
\sup_{v\in \mathcal{V}}\left\vert \frac{\sum_{i=1}^{n}\xi
_{i,r,2}(v) }{\sum_{i=1}^{n}K_{G}\left( \frac{v-q_{0}(X_{i})}{h_{G}}\right) }-%
\frac{\mathbb{E}\xi _{i,r,2}(v) }{\mathbb{E}K_{G}\left( \frac{%
v-q_{0}(X_{i})}{h_{G}}\right) }\right\vert =O(a_{n}\tau_{n}).
\end{equation*}%
Since $\mathbb{E}\xi _{i,r,2}(v) =cf_{q_{0}}(v)
h_{G}\tau_{n}$, we obtain
\begin{equation*}
\sup_{v\in \mathcal{V}}\left\vert F_{n}\big(G(v)
+2C\tau_{n}|v\big ) -F_{n}\big (G(v) +C\tau
_{n}|v\big ) -C\big (1+o(1) \big ) \tau_{n}\right\vert =O(a_{n}\tau_{n}) .
\end{equation*}%
Similarly, we have
\begin{equation*}
\sup_{v\in \mathcal{V}}\left\vert F_{n}\big (G(v)
-2C\tau_{n}|v\big ) -F_{n}\big (G(v) -C\tau
_{n}|v\big ) +C\big (1+o(1) \big ) \tau_{n}\right\vert =O(a_{n}\tau_{n}) .
\end{equation*}%
This completes the proof of part \textit{ii)}.
\endproof%

\renewcommand{\theequation}{D.\arabic{equation}}
\setcounter{equation}{0}
\setcounter{lemma}{0}
\setcounter{section}{1}
\setcounter {subsection}{3}
\renewcommand{\thetheorem}{D,\arabic{section}}
\renewcommand{\thetheorem}{\thesubsection\arabic{section}}

\subsection{Proofs of some Lemmas and a Corollary}\label{app:D}

\noindent \textbf{Proof of Lemma 3.1}  
\proof%
Denote  $W_{n}(t)$ by $W_{j,n}(t_{1,u})$
with $\widetilde{X}_j$ replaced by $t$. By the 
mean value of integration
and by Taylor expansion, there exist $0< \theta_1=\theta_1(X_i,t),
\theta_2=\theta_2(X_i,t)<1$ such that
\begin{align*}
\mathbb{E}\left[\left. \mathbb{I}_{\left(\varepsilon_i\leq
0\right)}-\mathbb{I}_{\left(\varepsilon_i\leq
r_{i,t}\right)}\right|X_i\right]=\int_{r_{i,t}}^{0} g(X_i,s){\rm d}s
=r_{i,t}g(X_i,\theta_2r_{i,t})=r_{i,t}g(X_i,0)+O(r_{i,t}^2)
\end{align*}
and
\[
r_{i,t}=\frac{\left(X_i-t\right)^p}{p!}q^{(p)}(t)
+\frac{\left(X_i-t\right)^{p+1}}{(p+1)!}q^{(p+1)}\big (t+\theta_1(X_{i}-t
) \big ).
\]
From the two relationships above, variable substitution and the
continuity of both the density function $p(\cdot)$ and the $(p+1)$th
derivative of $q(\cdot)$, it can be inferred that
\begin{align*}
\frac{1}{nh^{d+1}}\mathbb{E} W_{n}\left( t\right)  & h^{p-1}\int K(s)
A(s) s^{p} {\rm d}s \cdot\frac{q^{(p)}(t
)g_1\left(t\right)}{p!}+h^{p}\left[\int K(s) A(s) s^{p+1}{\rm  d}s \cdot
\frac{q^{(p+1)}(t)g_1(t)}{(p+1)!} \right.\notag\\
&\left.+ \int K(s) A(s) s^{p}{\rm d}s \cdot \frac{q^{(p)}(t
g_{1}'(t)}{p!} \right]+o(h^{p}).
\end{align*}
According to this and the facts that \eqref{QE}, 
$e_u^{\mbox{\tiny{T}}}Q^{-1}\int
A(s)K(s)s^l{\rm d}s=0$ with  $l=p$ or $p+1$, it can be inferred that
\begin{equation*}
\frac{1}{nh^{d+1}}Q_{jn,t_{1,u}}^{-1}\mathbb{E}_jW_{j,n}(t_{1,u}) =
h^pB_2(\widetilde{X}_j)  +o(h^p).
\end{equation*}
By the uniformly SLLN for kernel regression
function \citep[see, e.g.,][]{masry96}, with probability one, it
holds uniformly for $t\in \mathcal{X}$ that
\begin{eqnarray*}
\frac{1}{nh^{d+1}}\big(W_{n}(t)-\mathbb{E} W_{n}(t)
\big)=O\left(\frac{1}{h} \left(\frac{\log n}{nh^d}\right)^{1/2}\right)
.
\end{eqnarray*}
According to the SLLN, we have that
\[
 \frac{1}{n}\sum_{j=1}^n \left[\iint  B_2(\widetilde{X}_j){\rm d}t_{1}{\rm d}t_{2}
 -\mathbb{E}\iint  B_2(\widetilde{X}_j){\rm d}t_1{\rm d}t_2\right]
 =O\left(\sqrt{\frac{\log n}{n}}\right).
\]
Thus, from the three relationships above, it can be inferred that
Lemma 3.1 holds.
\endproof%

\subsection*{Proof of Lemma 3.2} 
\proof%
Let $\gamma _{n}=\frac{\sqrt{\log n}}{h^{2+\frac{1+\varepsilon
}{r}}\sqrt{n}}$ and $\ell_{n}=\varepsilon_{n}h^{2}$. We only
verify $\Delta_{1,u}$ since the other cases can be dealt with
analogously. As the usual way of getting the weak law of large
number, it follows that
\begin{equation*}
\frac{1}{n}\sum_{j=1}^{n}\left(\partial_{1}q( \widetilde{X}_{j})\right)
 \mathbb{E}\partial_{1}q(\widetilde{X}_{j}) ) =O
\left(\left(\frac{\log  n}{n}\right)^{\frac{1}{2}}\right)
\end{equation*}
holds uniformly for $(t_{1},t_{u})\in \mathcal{X}_{1,u}$. Then, 
from this, (3.1) and Theorem \ref{theoremA6}, we see that, with probability one,
\begin{equation*}
\Delta_{1,u}=\frac{1}{n^{2}h^{d+1}}\sum_{1\leq i\neq j\leq n}\psi
_{1,ij}+o(\gamma_{n})
\end{equation*}%
holds uniformly for $t_{1,u}\in \mathcal{X}_{1,u}$, where $\psi
_{1,ij}=e_{1}^{\mbox{\tiny{T}}}Q_{jn,t_{1,u}}^{-1}K_{ij,t_{1,u}}A_{ij,t_{1,u}}\big (
\alpha \mathbb{I}(\varepsilon_{i}\leq 0) \big )$. Let $\xi_{i}=
\mathbb{E}_{i}\psi_{1,ij}$. Then,
\begin{equation*}
\frac{1}{n^{2}h^{d+1}}\sum_{1\leq i\neq j\leq n}\psi_{1,ij}=\frac{1}{%
n^{2}h^{d+1}}\sum_{1\leq i<j\leq n}\left( \psi_{1,ij}+\psi_{1,ji}-\xi
_{i}-\xi _{j}\right) +\frac{\left( n-1\right) }{n^{2}h^{d+1}}%
\sum_{i=1}^{n}\xi_{i}.
\end{equation*}%
Let $U_{n}$ be the first term on the RHS of the
relationship
above. We now use Lemma \ref{lemmaA2} to get the convergence rate of $U_{n}$. And let $%
M_{p_{1}k}$ be the corresponding quantity stated in Lemma \ref{lemmaA2} with
$k<r\left(
1-\frac{1}{p_{1}}\right) -1$. In view of condition (B7), it can be inferred that $%
M_{p_{1}k}\leq Ch^{d/p_{1}}$. Applying Lemma A2, in view of condition (B6), it
follows that\ \ \
\begin{align*}
\frac{\mathbb{E}(U_{n}^{k}) }{\gamma_{n}^{k}\ell
_{n}^{2}}&\leq
\frac{Ch^{d/p_{1}}}{\left( \frac{\log n}{nh^{4+\frac{2+2\varepsilon }{r}}}%
\right) ^{\frac{k+2}{2}}\left( nh^{d+1}\right) ^{k}}   \\
&\leq \frac{C}{\left( \log n\right)
^{\frac{k+2}{2}}n^{\frac{k-2}{2}}h^{k\left( d+1\right)
-2\left( k+2\right) -\frac{d}{p_{1}}-\left( k+2\right) \frac{1+\varepsilon }{%
r}}}\leq \frac{1}{n(\log n)^{2}}
\end{align*}%
if $k=10$ and $r>11$. Then, it can be inferred that $\left\vert \xi
_{i}\right\vert \leq Ch^{d-2}\equiv M$ and $\left(\mathbb{E}\left\vert
\xi
_{i}^{r_{3}}\right\vert \right) ^{\frac{2}{r_{3}}}\leq Ch^{2(d-2)+\frac{4}{%
r_{3}}}$, where $1-\frac{2}{r_{3}}=\frac{1+\varepsilon }{r}$. By
Lemma \ref{lemmaA1} and taking $q=\frac{h^{1+\frac{1}{r}+\varepsilon
}\sqrt{n}}{\sqrt{\log n}}$, we have
\begin{eqnarray*}
\mathbb{P}\left\{ \left\vert \sum_{1\leq i\leq n}\xi_{i}\right\vert
\geq nh^{d+1}\gamma _{n}\right\}
\leq \exp \left\{\frac{-\left(
\varepsilon _{n}nh^{d+1}\right)^{2}}{\frac{n}{2q}q\left(
\mathbb{E}\left\vert \xi
_{i}\right\vert^{r_{3}}\right)^{\frac{2}{r_{3}}}+qM\gamma_{n}nh^{d+1}}%
\right\} +\frac{n}{q}\beta(q) \leq
n^{-CB}+\frac{n}{q}\beta(q) .
\end{eqnarray*}%
For any $s\in J_{l}$, from condition (B5) and
\begin{equation*}
Q_{jn,t_{1,u}}=\left. \int K(x)A(x)
A^{\mbox{\tiny{T}}}(x) g_{1}(z-
hx,0){\rm d} x\right\vert _{z=\widetilde{X}_{j}},
\end{equation*}%
we have $\left\Vert Q_{jn,s}-Q_{jn,t_{1,u}}\right\Vert \leq C\ell
_{n}$. Since $Q_{jn,t_{1,u}}\rightarrow Q\left( t_{1,u}\right) $ and
$Q(t_{1,u})$ is a positive definite matrix and is
continuous on the compact set, it can be inferred that $\left\Vert
Q_{jn,t_{1,u}}^{-1}\right\Vert $ is bounded by a constant.
Accordingly, we have
\begin{equation}
\left\Vert Q_{jn,s}^{-1}-Q_{jn,t_{1,u}}^{-1}\right\Vert \leq
\left\Vert Q_{jn,s}^{-1}\right\Vert \left\Vert
Q_{jn,s}-Q_{jn,t_{1,u}}\right\Vert \left\Vert
Q_{jn,t_{1,u}}^{-1}\right\Vert \leq C\ell _{n}.  \label{Qin}
\end{equation}%
Similar as the previous lemma, it follows that
\begin{eqnarray*}
\left\vert L_{1}(s) -L_{1}(t_{1,u})
\right\vert 
\leq \frac{C}{n^{2}h^{d+1}}\sum_{1\leq i\neq j\leq n}\left\{
\mathbb{I}\left(
\left\vert X_{i}-\widetilde{X}_{j}\right\vert \leq h\right) \frac{\ell_{n}}{h}%
+\mathbb{I}\left( h-\ell_{n}\leq X_{i}-\widetilde{X}_{j}\leq h+
\ell_{n}\right) \right\}.
\end{eqnarray*}%
Let $L_{2}$ and $L_{3}$ be the two terms on the RHS of
the
relationship above. We now consider the convergence rate of $L_{2}$. Let $%
\psi _{ij}=\mathbb{I}\big (|X_{i}-\widetilde{X}_{j}|\leq h\big ) +\mathbb{I}\big( |X_{j}-\widetilde{X}_{i}| \leq h\big )$, $\psi
_{i}=\mathbb{E}_{i}\psi_{ij}$ and $\varphi _{ij}=\psi_{ij}-\psi
_{i}-\psi_{j}+\mathbb{E}\psi_{1}$. Then,
\begin{equation*}
L_{2}=\frac{2\ell _{n}}{n^{2}h^{d+2}}\sum_{1\leq i<j\leq n}\varphi _{ij}+%
\frac{2\left( n-1\right) \ell_{n}}{n^{2}h^{d+2}}\sum_{i=1}^{n}\left( \psi
_{i}-\mathbb{E}\psi_{i}\right) -\frac{n\left( n-1\right) \ell_{n}}{%
n^{2}h^{d+2}}\mathbb{E}\psi_{1}\equiv L_{21}+L_{22}+L_{23}.
\end{equation*}%
By Lemma \ref{lemmaA2}, condition (B7), for $k<r\left( 1-\frac{1}{p_{1}}\right) -1$, we
have
\begin{equation*}
\mathbb{E}L_{21}^{k}\leq Cn^{k}\left( \frac{\ell
_{n}}{n^{2}h^{d+2}}\right)^{k}h^{d/p_{1}}.
\end{equation*}%
Thus, if $k$ is chosen to be $4$, in view of condition (B6), then
\begin{equation*}
\frac{\mathbb{E}_{21}^{k}}{\ell_{n}^{2}\gamma_{n}^{k}}\leq
\frac{C}{n(\log n)^{2}}.
\end{equation*}%
Since $\psi_{i}$ includes two terms and the proofs of these two
terms are the same, we just consider one of its terms $\xi
_{i}=\mathbb{E}_{i}\mathbb{I}\big(|X_{j}-\widetilde{X}_{i}|
\leq h\big )$. It is easy to see that $\left\vert \xi
_{i}\right\vert \leq Ch^{d-2}\mathbb{I}_{\left( t_{1,2}-h\leq X_{i,12}\leq
t_{1,2}+h\right) }$ and $\left(\mathbb{E}\left\vert \xi
_{i}\right\vert^{r_{3}}\right)^{\frac{2}{r_{3}}}\leq Ch^{2d-4}\cdot h^{%
\frac{4}{r_{3}}}=Ch^{2d-4}\cdot h^{2\left( 1-\frac{1+\varepsilon
}{r}\right) }=Ch^{2d-2\left( 1+\frac{1+\varepsilon }{r}\right) }$.
Then, from Lemma \ref{lemmaA1}, we have
\begin{eqnarray*}
\mathbb{P}\left\{ \left\vert \frac{\ell
_{n}}{nh^{d+2}}\sum_{i=1}^{n}\left( \xi_{i}-\mathbb{E}\xi
_{i}\right) \right\vert \geq \gamma_{n}\right\}
\!\leq\!
\exp \left\{ \frac{-\left( \frac{\gamma_{n}nh^{d+2}}{\ell_{n}}\right)^{2}%
}{\frac{n}{2q}q\left(\mathbb{E}\left\vert \xi_{i}\right\vert
^{r_{3}}\right)
^{\frac{2}{r_{3}}}+qM\frac{\gamma_{n}nh^{d+2}}{\ell_{n}}}\right\} +\frac{n%
}{q}\beta(q) \leq n^{-CB}+\frac{n}{q}\beta(q)
\end{eqnarray*}%
with $1-\frac{2}{r_{3}}=\frac{1+\varepsilon }{r}$ and $q=\frac{nh^{2}}{\log n%
}$. Then, $\frac{1}{\ell _{n}^{2}}\frac{n}{q}\beta(q)
\rightarrow 0.$ Since $\mathbb{E}\psi_{i}=Ch^{d}$, $|L_{23}|\leq \frac{1}{2}%
\gamma _{n}$. Next, we will consider the convergence rate of
$L_{3}$. Let
\begin{equation*}
\psi_{ij}=\mathbb{I}(h-\ell _{n}\leq X_{i}-\widetilde{X}_{j}\leq
h+\ell _{n}) +\mathbb{I}(h-\ell _{n}\leq
X_{j}-\widetilde{X}_{i}\leq h+\ell _{n}) ,
\end{equation*}%
$\psi_{i}=\mathbb{E}\psi_{ij}$ and $\varphi_{ij}=\psi _{ij}-\psi
_{i}-\psi _{j}+\mathbb{E}\psi_{1}$. Then,
\begin{equation*}
L_{3}=\frac{2}{n^{2}h^{d+1}}\sum_{1\leq i<j\leq n}\varphi _{ij}+\frac{%
2\left( n-1\right) }{n^{2}h^{d+1}}\sum_{i=1}^{n}\left( \psi_{i}-\mathbb{E}\psi_{i}\right) -\frac{n\left( n-1\right) }{n^{2}h^{d+1}}\mathbb{E}\psi_{1}%
\,\widehat{=}\, L_{31}+L_{32}+L_{33}.
\end{equation*}%
Again by exploiting Lemma \ref{lemmaA2} and condition (B6), if $k=8$ and $r\geq
5.4$, we have the following inequality
\begin{equation*}
\frac{1}{\ell _{n}^{2}\gamma_{n}^{k}}\mathbb{E}L_{31}^{k}\leq
\frac{1}{\ell
_{n}^{2}\gamma_{n}^{k}}\cdot\frac{C}{n^{k}h^{dk+2k}}\left( h^{d}\frac{\ell_{n}}{%
h}\right) ^{1-\frac{k+1}{r}-\varepsilon_{1}}\leq \frac{C}{n(\log
n)^{2}}.
\end{equation*}%
Similar to the case of $L_{22}$, we only consider one of the terms in $%
L_{32} $, i.e.,
\begin{equation*}
\mathbb{E}_{i}\mathbb{I}\big(h-\ell_{n}\leq |X_{i}-\widetilde{X}%
_{j}| \leq h+\ell_{n}\big) .
\end{equation*}%
After some calculation, it is asymptotically equivalent to
\begin{equation*}
h^{d-2}f\left( \bar{x}_{1,u}\right) \left( C_{1}\mathbb{I}\big (h-\ell
_{n}\leq \left\vert x_{1,u}-t_{1,u}\right\vert \leq h+\ell
_{n}\big ) +C_{2}\mathbb{I}\big (|x_{1,u}-t_{1,u}|
\leq h-\ell_{n}\big) \frac{\ell_{n}}{h}\right) .
\end{equation*}%
Since the proofs of the two terms above are the same, we only
consider the
first one, say $\xi_{i}$. Thus, $|\xi _{i}|\leq Ch^{d-2}$ and $\left(
\mathbb{E}\left\vert \xi _{i}\right\vert^{r_{3}}\right)
^{\frac{2}{r_{3}}}\leq
\left( h^{d-2}\right) ^{2}\left( h^{2}\frac{\ell_{n}}{h}\right) ^{1-\frac{%
1+\varepsilon }{r}}$ with $\frac{2}{r_{3}}=1-\frac{1+\varepsilon }{r}$. $%
\frac{1}{h^{d+1}}\mathbb{E}\xi_{i}\leq \gamma_{n}$. By using Lemma \ref{lemmaA1}, we have
\begin{equation*}
\mathbb{P}\left\{ \left\vert \frac{1}{nh^{d+1}}\sum_{i=1}^{n}\left( \xi _{i}-
\mathbb{E}\xi _{i}\right) \right\vert \geq \gamma _{n}\right\} \leq \exp
\left\{
\frac{-\left( \gamma _{n}nh^{d+1}\right) ^{2}}{\frac{n}{2q}q\left(\mathbb{E} \left\vert \psi _{i}\right\vert ^{r_{3}}\right)
^{\frac{2}{r_{3}}}+qM\gamma _{n}nh^{d+1}}\right\} +\frac{n}{q}\beta
(q) ,
\end{equation*}%
where $q=\frac{\gamma _{n}nh^{3}}{\log n}$. \ Since $\varepsilon $
is an arbitrary small constant, $\varepsilon _{n}$ can be replaced
by $\left( nh^{4+\frac{1+\varepsilon }{r}}\right) ^{-\frac{1}{2}}$.
This completes the proof of the lemma.
\endproof

\subsection*{Proof of Lemma 5.1} % \ref{link6}} % 
\begin{proof}%
In view of the definitions of $\widehat{q}_0(X_m)$ and $q_0(X_m)$, it
suffices to prove that, for $1\leq u\leq d$,
\[
\frac{1}{nh_G^2}\sum_{m=1}^{n}K_{G,m}^{'}\big(\widehat{q}_u(X_{m,u})
-q_u(X_{m,u})\big)=O\left(\frac{n^{-\varepsilon_0}}{\sqrt{nh_G}}\right).
\]
We here only give the proof for the case $2\leq u\leq d$ while
the proof of the  case $u=1$ is similar. Noting from the proof
of Theorem 3.1, we know that
$\widehat{q}_u(x_u)-q_u(x_u)=I_{1}-I_{2}-I_{3}$. At a random point $X_{m,u}$,
we  write $\widehat{q}_u(X_{m,u})-q_u(X_{m,u})=I_{1,m}-I_{2,m}-I_{3,m}$,
in which $I_{i,m}$ is equal to $I_{m}$ with $x_u$ replaced by
$X_{m,u}$ for $i=1,2,3$. Denote by
$J_i=\frac{1}{nh_G^2}\sum_{m=1}^{n}K_{G,m}^{'} I_{i,m}$, $i=1,2,3$.
Thus, it suffices to prove that $
J_i=O\left(\frac{n^{-\varepsilon_0}}{\sqrt{nh_G}}\right)$ for each $
i=1,2,3.$ Denote by
\[
l(X_{m,u},\widetilde{X}_j)= K_{G,m}'\int_{x_{u,0}}^{X_{m,u}}\int
\frac{w_1(t_1)\left(\partial_u q
(\widetilde{X}_j)\mathbb{I}(X_{j,\bar{u}}\in
\mathcal{X}_{j,\bar{u}})-D_u(t_1,t_u)\right)}{D_{1,u}(t_1,t_u)}
{\rm d}t_1{\rm d}t_u,
\]
 $J_{11}=\frac{1}{n^2h_G^2}\sum_{i=1}^n \sum_{j=1,j\neq m}^{n}l(X_{m,u},
 \widetilde{X}_j)$ and
\begin{align*}
J_{12}=\frac{1}{n^2h_G^2}\sum_{m=1}^{n} \sum_{j=1,j\neq
m}^{n}K_{G,m}^{'} \int_{x_{u,0}}^{X_{m,u}}\! \int\! \frac{w_1(t_1)
}{D_{1,u}(t_1,t_u)}\left(\partial_u \widehat{q}_u(\widetilde{X}_j)-
\partial_u
q_u(\widetilde{X}_j)\right)\mathbb{I}(X_{j,\bar{u}}\in
\mathcal{X}_{j,\bar{u}}){\rm d} t_1{\rm d} t_u.
\end{align*}
Then, $J_{1}=J_{11}+J_{12}$. Rewrite
\[
J_{11}=\frac{1}{n^2h_G^2}\sum_{m=1}^{n}  \sum_{j=1,j\neq
m}^{n}[l(X_{m,u},\widetilde{X}_j)-\mathbb{E}_{j}
l(X_{m,u},\widetilde{X}_j)]+\frac{n-1}{n^2h_G^2} \sum_{j=1}^{n}\mathbb{E}_{j}
l(X_{m,u},\widetilde{X}_{j})=J_{13}+J_{14}.
\]
\[
\left(\frac{\sqrt{nh_G}}{n^{-\varepsilon_0}}\right)^{k}
\mathbb{E}\left|J_{13}\right|^k\leq Cn^k
\left(\frac{\sqrt{nh_G}}{n^{2-\varepsilon_{0}}h_G^2}\right)^kM_{sk}^k=
 C
\left(n^{1+3\varepsilon_0}h_G^3\right)^{-\frac{k}{2}}h_{G}^{\frac{1}{s}}\leq
\frac{1}{n(\log n)^2}
\]
\[
\frac{\sqrt{nh_G}}{n^{-\varepsilon_{0}}}J_{14}=\frac{\sqrt{nh_G}}{n^{-\varepsilon_0}}\cdot\frac{n-1}{n^2h_G^2}
\cdot\frac{Ch_G^{2}\sqrt{n}}{\log n}\rightarrow 0.
\]
We now consider $J_{12}$. Noting from the proof of Theorem \ref{theoremA6}, to
deal with $J_{12}$, it suffices to prove the following four
expressions
\begin{align*}
\frac{1}{n^3h^{d+1} h_G^2}\sum_{m=1}^{n} \sum_{j=1,j\neq m}^{n}
\sum_{i=1,i\neq
        j}^{n} K_{G,m}^{'} \int _{x_{u,0}}^{X_{m,u}} \int \frac{w_1(t_1)
        \mathbb{I}(X_{j,\bar{u}}\in \mathcal{X}_{j,\bar{u}})
}{D_{1,u}(t_1,t_u)}Q_{ij,t_{1,u}}^{-1}K_{ij,t_{1,u}} \\
\cdot\, A_{ij,t_{1,u}}(\alpha-\mathbb{I}(\varepsilon_i\leq -r_{ij})){\rm d} t_{1}{\rm d} t_{u}~~~~~~~~~~~~~
\end{align*}
\begin{align*}
\frac{1}{n^{3}h^{d+1} h_G^2}\sum_{m=1}^{n} \sum_{j=1,j\neq m}^{n}
\sum_{i=1,i\neq
        j}^{n}  K_{G,m}^{'} \int _{x_{u,0}}^{X_{m,u}} \int \frac{w_1(t_1)
        \mathbb{I}(X_{j,\bar{u}}\in \mathcal{X}_{j,\bar{u}})
}{D_{1,u}(t_1,t_u)}\Delta_{ij}(t_{1,u},\beta){\rm d} t_1{\rm d} t_u,
\end{align*}
$\frac{1}{n^2h  h_G^2}n\cdot nh_G\cdot(\delta^{2}+\delta h^p)$ and
$\frac{1}{n^2h  h_G^2}n\cdot nh_G\cdot\frac{C}{nh^{d}}$ are all of the
order $\frac{n^{-\varepsilon_0}}{\sqrt{nh_G}}$, and moreover, the
second expression is of order uniform for $|\beta-\beta_{\tau}|\leq
\frac{B}{\sqrt{nh^{(1+\frac{1}{r})d+\varepsilon}}}$. In more 
details, the first three expressions come from the RHS
of \eqref{a4} and the last two terms on the LHS of
\eqref{a4}, and the last expression comes from the last term of
\eqref{a5}. For convenience, denote the first two terms by $J_{13}$
and $J_{14}$, respectively. As for $J_{13}$, by using the SLLN three
times, it can be obtained that
\begin{align*}
J_{13}&=\frac{n^3}{n^3h^{d+1} h_G^2}\mathbb{E}\left\{ K_{G,m}^{'}
\int_{x_{u,0}}^{X_{m,u}} 
\int \mathbb{E}\left[\frac{w_1(t_1)
\mathbb{I}(X_{j,\bar{u}}\in \mathcal{X}_{j,\bar{u}})
}{D_{1,u}(t_1,t_u)}\right.\right.\\
&\quad\left.\left. \mathbb{E}_{j} \left(Q_{ij,t_{1,u}}^{-1}K_{ij,t_{1,u}}
A_{ij,t_{1,u}}\big(\alpha-\mathbb{I}(\varepsilon_i\leq
-r_{ij})\big)\right)\right]{\rm d} t_1{\rm d} t_u\right\}\big(1+o(1)\big)=O(h^{p-1}),
\end{align*}
where the last step follows from the property of the conditional
expectation.

\begin{equation*}
\zeta_{m,i,j}=  K_{G,m}^{'} \int _{x_{u,0}}^{X_{m,u}}
\int \frac{w_1(t_1)
}{D_{1,u}(t_1,t_u)}Q_{jn,t_{1,u}}^{-1}\left(\Delta_{ij}(t_{1,u},\beta)-
\mathbb{E}\Delta_{ij}(t_{1,u},\beta)
\right){\rm d} t_1{\rm d} t_u
\end{equation*}
\begin{equation*}
\frac{1}{n^3h^{d+1} h_G^2}\sum_{m=1}^{n} \sum_{j=1,j\neq m}^{n}
\sum_{i=1,i\neq
        j}^{n} \zeta_{m,i,j}=\frac{1}{n^3h^{d+1} h_G^2}\sum_{j=1}^{n}
\sum_{i=1,i\neq
        j}^{n} \sum_{m=1,m\neq j}^{n} \zeta_{m,i,j}
\end{equation*}

\begin{equation*}
\eta_{i,j}(\beta)=\mathbb{E}_{i,j}\left[K_{G,m}^{'}\int
_{x_{u,0}}^{X_{m,u}} \int \frac{w_1(t_1)
}{D_{1,u}(t_1,t_u)}Q_{jn,t_{1,u}}^{-1}\left(\Delta_{ij}(t_{1,u},\beta)-
\mathbb{E}\Delta_{ij}(t_{1,u},\beta)
\right){\rm d} t_1{\rm d}t_u\right]
\end{equation*}
\begin{equation*}
\frac{1}{n^3h^{d+1} h_G^2}\sum_{j=1}^{n}
\sum_{i=1,i\neq
        j}^{n} \sum_{m=1,m\neq j}^{n} (\zeta_{m,i,j}-\eta_{i,j}(\beta))
\end{equation*}

\begin{equation*}
\xi_{m,i}=\zeta_{m,i,j}-\eta_{i,j}(\beta)
\end{equation*}

\begin{equation*}
\frac{(\sqrt{nh_G})^{k}\mathbb{E}|\xi_{m,i}|^k}{(n^2h^{d+1}h_G^2)^k} \leq
\frac{C(\sqrt{nh_G})^{k} h_G h^{d+p}}{(n^2h^{d+1}h_G^2)^k}\leq \frac{C
}{n(\log n)^2}.
\end{equation*}
If $k=6,$
\begin{equation*}
\frac{Cn h_G h^{d+p}}{(\sqrt{nh_G}n h^{d+1}h_G )^k}=\frac{Cn h_G
        h^{d+p}}{(\sqrt{nh_G}n h^{d+1}h_G )^6}\leq \frac{C }{(\log n)^2}.
\end{equation*}
This completes the proof.
\end{proof}

\subsection*{Proof of Lemma A5}
\begin{proof}%
Let $\delta_{n}$ denote the RHS of \eqref{Bahadur4}. In
order
to prove uniformity, we adopt a similar division on the domain $\mathcal{X}%
_{1,u}$ and the sphere $|\beta -\beta_{j,t_{1,u}}|
 =\delta $ as that of Lemma \ref{lemmaA5}. And the same notations are used here. Let $%
 \ell_{n}=d_{1}=\delta h$ and $q=\delta_{n}n^{-\frac{\varepsilon
 }{2}}$.
 For any real $r_{2}>0$, on the event $\{|X_{i}-\widetilde{X}%
 _{j}| \leq h\}$, it can be inferred that
  \begin{eqnarray*}
  \mathbb{E}_{j}\left[ \left. \left\vert \mathbb{I}\left( \varepsilon
  _{i}\leq -r_{ij,t_{1,u}}-P_{ij,t_{1,u}}\right) -\mathbb{I}\left(
  \varepsilon _{i}\leq -r_{ij,t_{1,u}}\right) \right\vert
  ^{r_{2}}\right\vert X_{i}\right] 
  =\left\vert
 \int_{-r_{ij,t_{1,u}}}^{-r_{ij,t_{1,u}}-P_{ij,t_{1,u}}}g(
  X_{i},u) {\rm d} u\right\vert \leq C\delta .
  \end{eqnarray*}%
  Furthermore, we have $\mathbb{E}_{j}\left\vert \Delta _{ij}\left(
  t_{1,u},\beta \right) \right\vert ^{r_{2}}\leq Ch^{d}\delta $. Then,
  as previously proved, in view of the inequality (\ref{Bernstein3}),
  we know that
  \begin{equation*}
  \frac{n}{q}\cdot q\left( h^{d}\delta \right)
  ^{\frac{2}{r_{2}}}<Cq\delta_{n},
  \end{equation*}%
  where $1-\frac{2}{r_{2}}=\frac{1+\varepsilon }{r}$ for some
  sufficiently small constant $\varepsilon >0$. Then, from Lemma \ref{lemmaA1},
  it follows that
  \begin{equation}
   \mathbb{P}\left( \left\vert \sum_{i=1}^{n}\big (\Delta_{ij}(t_{1,u},\beta) -\mathbb{E}_{j}\Delta_{ij}(t_{1,u},\beta )
  \big) \right\vert >\frac{\delta _{n}}{2}\right) \leq n^{-CB}+\frac{n}{q}%
  \beta \left( q\right) +C\left( \frac{q}{\delta_{n}}\right)
  ^{r_{1}}. \label{in2}
   \end{equation}%
   In addition, note that%
   \begin{eqnarray}
   &&\left\vert \beta^{\mbox{\tiny{T}}}A_{ij,t_{1,u}}-\gamma
  ^{\mbox{\tiny{T}}}A_{ij,s}\right\vert \notag\\
   \!\!&\!\!\leq\!\!&\!\! \left\vert (\beta -\beta
    _{j,t_{1,u}})^{\mbox{\tiny{T}}}A_{ij,t_{1,u}}-(\gamma -\beta
    _{j,s})^{\mbox{\tiny{T}}}A_{ij,s}\right\vert +\left\vert \beta
    _{j,t_{1,u}}^{\mbox{\tiny{T}}}A_{ij,t_{1,u}}-
\beta_{j,s}^{\mbox{\tiny{T}}}A_{ij,s}\right\vert
\leq C\ell_{n}. \quad \label{res2}
\end{eqnarray}%
Analogous to the proof of Lemma \ref{lemmaA2}, from \eqref{res1} and
\eqref{res2}, it follows that
\begin{eqnarray*}
&&\left\vert \left[ \Delta_{ij}(s,\gamma)
 -\mathbb{E}_{j}\Delta_{ij}(s,\gamma) \right] -\left[
 \Delta_{ij}(t_{1,u},\beta) -\mathbb{E}_{j}\Delta_{ij}(t_{1,u},\beta) %
 \right] \right\vert \\
  &\leq &C\,\mathbb{I}_{\left( \left\vert X_{i}-\widetilde{X}_{j}\right\vert
 \leq h\right) }\left\{ \frac{\ell _{n}}{h}\mathbb{I}_{\left( \left\vert
 \varepsilon _{i}+r_{ij,t_{1,u}}\right\vert \leq \delta \right)
 }+\mathbb{I}_{\left( \left\vert \varepsilon
  _{i}+r_{ij,t_{1,u}}+P_{ij,t_{1,u}}\right\vert \leq C\ell_{n}\right)
  }+\mathbb{I}_{\left( \left\vert \varepsilon
  _{i}+r_{ij,t_{1,u}}\right\vert \leq C\ell_{n}\right) }\right\} \\
   &&\,+\,C\,\mathbb{I}_{\left( h-\ell_{n}\leq \left\vert
   X_{i}-\widetilde{X}_{j}\right\vert \leq h+\ell _{n}\right)
  }\mathbb{I}_{\left( \left\vert \varepsilon
 _{i}+r_{ij,t_{1,u}}\right\vert \leq C\left( \ell_{n}+\delta \right)
   \right) }.
  \end{eqnarray*}%
  Then, through using Lemma \ref{lemmaA1} and (\ref{Bernstein3}), we know that
  the following four probabilities
  \begin{equation*}
  \mathbb{P}\left( \left\vert \sum_{i}\frac{\ell_{n}}{h}\mathbb{I}_{\left(
  \left\vert X_{i}-\widetilde{X}_{j}\right\vert \leq h\right)
   }\mathbb{I}_{\left( \left\vert \varepsilon
  _{i}+r_{ij,t_{1,u}}\right\vert \leq \delta \right) }\right\vert
  >\frac{\delta_{n}}{8}\right) ,
  \end{equation*}%
  \begin{equation*}
  \mathbb{P}\left( \left\vert \sum_{i}\mathbb{I}_{\left( \left\vert X_{i}-\widetilde{X}%
                _{j}\right\vert \leq h\right) }\mathbb{I}_{\left( \left\vert \varepsilon _{i}+r_{ij,t_{1,u}}+P_{ij,t_{1,u}}\right\vert \leq C\ell_{n}\right)
        }\right\vert >\frac{\delta_{n}}{8}\right) ,
        \end{equation*}%
        \begin{equation*}
        \mathbb{P}\left( \left\vert \sum_{i}\mathbb{I}_{\left( \left\vert X_{i}-\widetilde{X}%
                _{j_{n}}\right\vert \leq h\right) }\mathbb{I}_{\left( \left\vert
                \varepsilon
                _{i}+r_{ij,t_{1,u}}\right\vert \leq C\ell _{n}\right) }\right\vert >\frac{%
                \delta_{n}}{8}\right)
        \end{equation*}%
        and%
        \begin{equation*}
        \mathbb{P}\left( \left\vert \sum_{i}\mathbb{I}_{\left( h-\ell_{n}\leq
                \left\vert X_{i}-\widetilde{X}_{j_{n}}^{u}\right\vert \leq h+\ell
                _{n}\right) }\mathbb{I}_{\left( \left\vert \varepsilon
                _{i}+r_{ij,t_{1,u}}\right\vert \leq
                C\left( \ell_{n}+\delta \right) \right) }\right\vert >\frac{\delta_{n}}{8}%
        \right)
        \end{equation*}%
        are bounded by the RHS of \eqref{in2}, where the
        relationships
        \begin{equation*}
        \mathbb{E}_{j}\mathbb{I}_{\left( \left\vert X_{i}-\widetilde{X}_{j}\right\vert
                \leq h\right) }\mathbb{I}_{\left( \left\vert \varepsilon
                _{i}+r_{ij,t_{1,u}}+P_{ij,t_{1,u}}\right\vert \leq C\ell_{n}\right)
        }\leq Ch^{d}\ell_{n}
        \end{equation*}%
        and%
        \begin{equation*}
        \mathbb{E}_{j}\left(\mathbb{I}_{\left( h-\ell_{n}\leq \left\vert X_{i}-\widetilde{X}%
                _{j}\right\vert \leq h+\ell _{n}\right) }\mathbb{I}_{\left( \left\vert
                \varepsilon _{i}+r_{ij,t}\right\vert \leq C\left( \ell_{n}+\delta
                \right) \right) }\right) \leq C\left( \ell_{n}+\delta \right) \ell
        _{n}h^{d-1}
        \end{equation*}%
        are used. Also, the inequality (\ref{in3}) follows from the
        condition on $r$ and condition (B3). Similar as the proof of Lemma \ref{lemmaB3},
        (\ref{Bahadur4}) can be inferred from the Borel-Cantelli lemma.
\end{proof}
\subsection*{Proof of Corollary 5.1 (Sketch)}  % 
\begin{proof}
From (5.3), we have \vspace{-0.3cm}
\begin{align}
&(1-\alpha )-F_{n}\big(G(v)|v\big) \notag \\
&=\frac{\sum_{i=1}^{n}\left\{ \left[ K_{G,i,v}\big ( \left(
1-\alpha
\right) -\mathbb{I}\left( Y_{i}\leq G\left( v\right) \right) \big) \right] -
\mathbb{E}\left[ K_{G,i,v}\big ( \left( 1-\alpha \right)
-\mathbb{I}(
Y_{i}\leq G\left( v\right) ) \big ) \right] \right\} }{%
\sum_{i=1}^{n}K_{G,i}}  \notag \\
&-\frac{n\left(\mathbb{E}\left[ K_{G,i,v}\mathbb{I}\big(Y_{i}\leq
G(v)\big) \right] -\mathbb{E}\left[
K_{G,i,v}\mathbb{I}(\varepsilon_{i}\leq 0) \right] \right)
}{\sum_{i=1}^{n}K_{G,i}}
\notag \\
&=I_{1}+I_{2}.  \label{a6}
\end{align}
It can be proved that
\begin{equation}
\frac{\sum_{i=1}^{n}\!\left\{\left[ K_{G,i,v}\big (\left( 1-\alpha
\right)
-\mathbb{I}\left( Y_{i}\leq G(v) \right) \big ) \right]\! -
\mathbb{E}\!\left[ K_{G,i,v}\big (\left( 1-\alpha \right)\! -
\mathbb{I}\left(
Y_{i}\leq G\left( v\right) \right) \big ) \right] \right\}
}{\sqrt{\alpha \left(
1-\alpha \right) f_{q_{0}}(v) nh_{G}}}\overset{d}{\rightarrow }%
\mathcal{N}(0,1).  \label{a7}
\end{equation}%
Moreover, it can be proved that%
\begin{align}
\sum_{i=1}^{n}K_{G,i,v}=n\mathbb{E}K_{G,i,v}+ O\left( 
\frac{\sqrt{\log n}}{\sqrt{nh_{G}} }\right)
=nh_{G}f_{q_{0}}(v) \big(
1+O(h_{G}^{2}) \big ) +O\left( \frac{\sqrt{\log
n}}{\sqrt{nh_{G}}}\right).  \label{a8} 
\end{align}
By variable substitution, Taylor's expansion and $\int sK_{G}\left(
s\right){\rm d}s=0$, it follows that
\begin{equation}
\frac{\mathbb{E}\left[ K_{G,i,v}\mathbb{I}\big (Y_{i}\leq G(
v)
\big) \right] -\left( 1-\alpha \right) \mathbb{E}K_{G,i,v}}{%
h_{G}f_{q_{0}}(v) }=a(v) h_{G}^{2}+o\left(
h_{G}^{2}\right).  \label{a10}
\end{equation}%
Then, Corollary 5.1 can be inferred from part \textit{i)} of Theorem 5.1, \eqref{a6}, \eqref{a7}, \eqref{a8}, \eqref{a10} and the fact that
$\Big (nh^{1+\frac{1+\epsilon}{r}}\Big)^{-\frac{1}{2}}=o(h^{2}_{G})$.
\end{proof}

\end{appendices}

\end{document}